\documentclass[a4paper,11pt,reqno]{amsart}
\usepackage{graphicx}
\usepackage{amsmath}
\usepackage{amssymb}
\usepackage{oldgerm}
\usepackage[all]{xy}
\usepackage{amsthm}
\usepackage{mathtools}
\newtheoremstyle{dotless}{}{}{\itshape}{}{\bfseries}{}{ }{}
\theoremstyle{dotless}
\newtheorem{te}{Theorem}[section]
\newtheorem{co}[te]{Corollary}
\newtheorem{re}[te]{Remark}

\newtheorem{de}[te]{\sc Definition}
\newtheorem{ex}[te]{Example}
\newtheorem{prop}[te]{Proposition}

\setlength{\textheight}{21cm} \setlength{\topmargin}{.5cm}

\def\squareforqed{\hbox{\rlap{$\sqcap$}$\sqcup$}}
\def\qed{\ifmmode\else\unskip\quad\fi\squareforqed}
\def\smartqed{\def\qed{\ifmmode\squareforqed\else{\unskip\nobreak\hfil
\penalty50\hskip1em\null\nobreak\hfil\squareforqed
\parfillskip=0pt\finalhyphendemerits=0\endgraf}\fi}}
\smartqed

\usepackage{tikz}
\usetikzlibrary{calc,decorations.markings,arrows}
\tikzset{->-/.style={decoration={
  markings,
  mark=at position .5 with {\arrow{>}}},postaction={decorate}}}

\newlength{\qthickness}
\setlength{\qthickness}{0.4pt}

\usepackage{color}
\usepackage{enumerate}
\usepackage{xy}
\usepackage{array}
\usepackage{amssymb}
\usepackage{psfrag}
\usepackage{graphics}
\usepackage{graphicx}
\usepackage{epsfig}

\usepackage{epic}
\usepackage{subfigure}
\usepackage{color}

\xyoption{matrix}
\xyoption{all}

\begin{document}
\baselineskip=15pt
\title[Extensions between Cohen-Macaulay modules]{Extensions between Cohen-Macaulay modules of Grassmannian cluster categories}

\author[Baur]{Karin Baur}
\email{baurk@uni-graz.at}

\author[Bogdanic]{Dusko Bogdanic}
\email{dusko.bogdanic@gmail.com}


\begin{abstract}  
In this paper we study extensions between  Cohen-Macaulay modules for algebras arising in the categorifications of Grassmannian cluster algebras. We prove that rank 1 modules are periodic, and we give explicit formulas for the computation of the period based solely on the rim of the rank 1 module in question. We determine ${\rm Ext}^i(L_I, L_J)$ for arbitrary rank 1 modules $L_I$ and $L_J$. An explicit combinatorial algorithm is given for computation of ${\rm Ext}^i(L_I, L_J)$ when $i$ is odd, and for $i$ even, we show that ${\rm Ext}^i(L_I, L_J)$ is cyclic over the centre, and we give an explicit formula for its computation. At the end of the paper we give a vanishing condition of ${\rm Ext}^i(L_I, L_J)$ for any $i>0$.

\end{abstract}

\maketitle \vspace{-10mm}

\section{Introduction and preliminaries}

In his study~\cite{POS} of the total positivity of the Grassmannian Gr$(k,n)$ of $k$-planes 
in $\mathbb{C}^n$, Postnikov 
introduced alternating strand diagrams as collections of $n$ curves in a disk satisfying 
certain axioms. 
Alternating strand diagrams associated to the permutation $i\mapsto i+k$ of $\{1,2,\dots, n\}$ were used by 
Scott~\cite{SCT} to show that the homogeneous coordinate ring of Gr$(k,n)$ has the structure 
of a cluster algebra, with each such diagram corresponding to a seed whose (extended) 
cluster consists of minors (i.e.\ of Pl\"ucker coordinates), where the minors are labelled by 
$k$-subsets of $\{1,2,\dots, n\}$. The diagram both gives the quiver of the cluster and the minors 
(cluster variables) contained in it: every alternating region of the diagram obtains as a label 
the $k$-subset formed by the strands passing to the right of the region, and the quiver can be read-off 
from the geometry of the strands. 
Oh-Postnikov-Speyer have proved in~\cite{OPS} that every cluster consisting of minors arises 
in this way, so there is a bijection between clusters of minors and strand diagrams for the Grassmann 
permutation. 
A categorification of this cluster algebra structure has been obtained by 
Geiss-Leclerc-Schroeer~\cite{GLS} via (a subcategory of) the category of finite dimensional 
modules over the preprojective algebra of type A$_{n-1}$.

In \cite{JKS}, Jensen-King-Su gave a new and extended categorification of this cluster structure using the maximal Cohen-Macaulay modules \cite{CMM} over the completion of an algebra $B$ which is a quotient of the preprojective algebra of type $\tilde{A}_{n-1}$. 
In particular, a rank $1$ Cohen-Macaulay $B$-module $L_I$ is associated to each $k$-subset $I$ of $\{1,2,\dots,n\}$.

It was shown in \cite{JKS} that every rigid indecomposable Cohen-Macaulay module for the above mentioned algebra $B$  has a generic filtration by rank 1 modules. This  enables a description of these modules in terms of the so-called  profiles, given by  collections of  $k$-subsets that correspond to the rank 1 modules in this filtration. In particular, this profile
determines the class of the module in the Grothendieck group of the category of Cohen-Macaulay modules. Therefore, rank 1 modules are the building blocks of the category of Cohen-Macaulay modules, and in order to understand representation-theoretic invariants for the category of all Cohen-Macaulay modules, we must first do so for rank 1 modules. Since the algebra $B$ is infinite dimensional, most of the homological computations are difficult to conduct, but for some problems it is possible to give complete answers. Such a problem is the computation of the extension spaces between rank 1  Cohen-Macaulay modules.

After some introductory remarks, in the second section of this paper we prove that rank 1 Cohen-Macaulay modules over the above mentioned completion of the algebra $B$ are periodic, with periods being even numbers in the case when $I$ is a disjoint union of more than two intervals. We give an explicit combinatorial formula for computation of the period of a given rank 1 module $L_I$ only in terms of the $k$-subset $I$, which is called the rim of the rank 1 module $L_I$. In the last section of this paper we give an explicit combinatorial description of the Ext-spaces between rank 1 Cohen-Macaulay modules. The description is  in terms of a new combinatorial and geometric construction consisting of a sequence of trapezia given by the rims of rank 1 Cohen-Macaulay modules. An explicit algorithm is constructed for the computation of the Ext-spaces which turn out to be finite dimensional. Also, we prove directly that the Ext-functor is commutative for rank 1 modules, and that  ${\rm Ext}^2(L_I,L_J)$, where $L_I$ and $L_J$ are rank 1 Cohen-Macaulay modules,  is a cyclic module over the centre $\mathbb F[t]$ of $B$. By using the fact that rank 1 modules are periodic,  it was proven that for any $i>0$, ${\rm Ext}^i(L_I,L_J)$ is a finite dimensional vector space. At the end of the paper we give a combinatorial criterion for vanishing of ${\rm Ext}^i(L_I, L_J)$ for any $i>0$.

\subsection{Notation and set-up} We now follow the exposition from \cite{BKM} in order to introduce notation and background results. 
Let $C$ be  a circular graph  with vertices $C_0=\mathbb Z_n$ set clockwise around a
circle, and with the set of edges, $C_1$, also labelled by $\mathbb Z_n$, with edge $i$ joining vertices $i-1$ and $i$. For integers $a,b\in \{1,2,\dots, n\}$, we denote by $[a,b]$ the closed cyclic
interval consisting of the elements of the set $\{a, a +1, \dots, b\}$ reduced modulo $ n$.
Consider the quiver with vertices $C_0$
and, for each edge $i\in C_1$, a pair of arrows $x_i{\colon}i-1\to i$
and $y_i{\colon}i\to i-1$. Then let $B$ be the quotient of the
path algebra (over $\mathbb{F}$, where $\mathbb F=\bar{\mathbb F}$) of this quiver by the ideal generated by the
$2n$ relations $xy=yx$ and $x^k=y^{n-k}$, interpreting $x$ and $y$
as arrows of the form $x_i,y_i$ appropriately and starting at any vertex, e.g.\  when $n=5$ we have

\begin{center}
\begin{tikzpicture}[scale=1]
\newcommand{\radius}{1.5cm}
\foreach \j in {1,...,5}{
  \path (90-72*\j:\radius) node[black] (w\j) {$\bullet$};
  \path (162-72*\j:\radius) node[black] (v\j) {};
  \path[->,>=latex] (v\j) edge[black,bend left=30,thick] node[black,auto] {$x_{\j}$} (w\j);
  \path[->,>=latex] (w\j) edge[black,bend left=30,thick] node[black,auto] {$y_{\j}$}(v\j);
}
\draw (90:\radius) node[above=3pt] {$5$};
\draw (162:\radius) node[above left] {$4$};
\draw (234:\radius) node[below left] {$3$};
\draw (306:\radius) node[below right] {$2$};
\draw (18:\radius) node[above right] {$1$};
\end{tikzpicture}
\end{center}

The completion $\widehat{B}$ of $B$ coincides with the quotient of the completed path
algebra of the graph $C$, i.e.\ the doubled quiver as above,
by the closure of the ideal generated by the relations above.
The algebras $B$ and $\widehat{B}$ were introduced
in \cite{JKS}, Section 3.

The centre $Z$ of $B$ is the polynomial ring $\mathbb{F}[t]$,
where $t=\sum_{i=1}^n x_iy_i$.
The (maximal) Cohen-Macaulay $B$-modules are precisely those which are
free as $Z$-modules. Indeed, such a module $M$ is given by a representation
$\{M_i\,:\,i\in C_0\}$ of
the quiver with each $M_i$ a free $Z$-module of the same rank
(which is the rank of $M$, cf. \cite{JKS}, Section 3).

\begin{de}[\cite{JKS}, Definition 3.5]
For any $B$-module $M$, if $K$ is the field of fractions of $Z$, we can define its \emph{rank}
\[
 {\rm rk}(M) = {\rm len}\bigl( M\otimes_Z K \bigr), 
\]
noting that $B\otimes_Z K\cong M_n ( K)$, 
which is a simple algebra.
\end{de}

It is easy to check that the rank is additive on short exact sequences,
that ${\rm rk} (M) = 0$ for any finite-dimensional $B$-module 
(because these are torsion over $Z$) and 
that, for any Cohen-Macaulay $B$-module $M$ and every idempotent $e_j$, $1\leq j\leq n$,
\[
  {\rm rk}_Z(e_j M) = {\rm rk}(M),
\]
so that, in particular, ${\rm rk}_Z(M) = n  {\rm rk}(M)$.

\begin{de}[\cite{JKS}, Definition 5.1] \label{d:moduleMI}
For any $k$-subset   $I$  of $C_1$, we define a rank $1$ $B$-module
\[
  L_I = (U_i,\ i\in C_0 \,;\, x_i,y_i,\, i\in C_1)
\]
as follows.
For each vertex $i\in C_0$, set $U_i=\mathbb{F}[t]$ and,
for each edge $i\in C_1$, set
\begin{itemize}
\item[] $x_i\colon U_{i-1}\to U_{i}$ to be multiplication by $1$ if $i\in I$, and by $t$ if $i\not\in I$,
\item[] $y_i\colon U_{i}\to U_{i-1}$ to be multiplication by $t$ if $i\in I$, and by $1$ if $i\not\in I$.
\end{itemize}
\end{de}

The module $L_I$ can be represented by a lattice diagram
$\mathcal{L}_I$ in which $U_0,U_1,U_2,\ldots, U_n$ are represented by columns from
left to right (with $U_0$ and $U_n$ to be identified).
The vertices in each column correspond to the natural monomial basis of $\mathbb{F} [t]$.
The column corresponding to $U_{i+1}$ is displaced half a step vertically
downwards (respectively, upwards) in relation to $U_i$ if $i+1\in I$
(respectively, $i+1\not \in I$), and the actions of $x_i$ and $y_i$ are
shown as diagonal arrows. Note that the $k$-subset $I$ can then be read off as
the set of labels on the arrows pointing down to the right which are exposed
to the top of the diagram. For example, the lattice picture $\mathcal{L}_{\{1,4,5\}}$
in the case $k=3$, $n=8$, is shown in the following picture   

\[
\begin{tikzpicture}[scale=0.8,baseline=(bb.base),
quivarrow/.style={black, -latex, thick}]
\newcommand{\seventh}{51.4} 
\newcommand{\circradius}{1.5cm}
\newcommand{\inradius}{1.2cm}
\newcommand{\outradius}{1.8cm}
\newcommand{\dotrad}{0.1cm} 
\newcommand{\bdrydotrad}{{0.8*\dotrad}} 
\path (0,0) node (bb) {}; 


\draw (0,0) circle(\bdrydotrad) [fill=black];
\draw (0,2) circle(\bdrydotrad) [fill=black];
\draw (1,1) circle(\bdrydotrad) [fill=black];
\draw (2,0) circle(\bdrydotrad) [fill=black];
\draw (2,2) circle(\bdrydotrad) [fill=black];
\draw (3,1) circle(\bdrydotrad) [fill=black];
\draw (3,3) circle(\bdrydotrad) [fill=black];
\draw (4,0) circle(\bdrydotrad) [fill=black];
\draw (4,2) circle(\bdrydotrad) [fill=black];
\draw (5,1) circle(\bdrydotrad) [fill=black];
\draw (6,0) circle(\bdrydotrad) [fill=black];
\draw (6,2) circle(\bdrydotrad) [fill=black];
\draw (7,1) circle(\bdrydotrad) [fill=black];
\draw (7,3) circle(\bdrydotrad) [fill=black];
\draw (8,2) circle(\bdrydotrad) [fill=black];
\draw (8,4) circle(\bdrydotrad) [fill=black];


\draw [quivarrow,shorten <=5pt, shorten >=5pt] (0,2)
-- node[above]{$1$} (1,1);
\draw [quivarrow,shorten <=5pt, shorten >=5pt] (1,1) -- node[above]{$1$} (0,0);
\draw [quivarrow,shorten <=5pt, shorten >=5pt] (2,2) -- node[above]{$2$} (1,1);
\draw [quivarrow,shorten <=5pt, shorten >=5pt] (1,1) -- node[above]{$2$} (2,0);
\draw [quivarrow,shorten <=5pt, shorten >=5pt] (3,3) -- node[above]{$3$} (2,2);
\draw [quivarrow,shorten <=5pt, shorten >=5pt] (2,2) -- node[above]{$3$} (3,1);
\draw [quivarrow,shorten <=5pt, shorten >=5pt] (3,1) -- node[above]{$3$} (2,0);
\draw [quivarrow,shorten <=5pt, shorten >=5pt] (3,3) -- node[above]{$4$} (4,2);
\draw [quivarrow,shorten <=5pt, shorten >=5pt] (4,2) -- node[above]{$4$} (3,1);
\draw [quivarrow,shorten <=5pt, shorten >=5pt] (3,1) -- node[above]{$4$} (4,0);
\draw [quivarrow,shorten <=5pt, shorten >=5pt] (4,2) -- node[above]{$5$} (5,1);
\draw [quivarrow,shorten <=5pt, shorten >=5pt] (5,1) -- node[above]{$5$} (4,0);
\draw [quivarrow,shorten <=5pt, shorten >=5pt] (6,2) -- node[above]{$6$} (5,1);
\draw [quivarrow,shorten <=5pt, shorten >=5pt] (5,1) -- node[above]{$6$} (6,0);
\draw [quivarrow,shorten <=5pt, shorten >=5pt] (6,2) -- node[above]{$7$} (7,1);
\draw [quivarrow,shorten <=5pt, shorten >=5pt] (7,1) -- node[above]{$7$} (6,0);
\draw [quivarrow,shorten <=5pt, shorten >=5pt] (7,3) -- node[above]{$7$} (6,2);
\draw [quivarrow,shorten <=5pt, shorten >=5pt] (7,3) -- node[above]{$8$} (8,2);
\draw [quivarrow,shorten <=5pt, shorten >=5pt] (8,2) -- node[above]{$8$} (7,1);
\draw [quivarrow,shorten <=5pt, shorten >=5pt] (8,4) -- node[above]{$8$} (7,3);

\draw [dotted] (0,-2) -- (0,2);
\draw [dotted] (8,-2) -- (8,4);

\draw [dashed] (4,-2) -- (4,-1);

\end{tikzpicture}
\]

\vspace{2mm}
We see from the above picture that the module $L_I$ is determined by its upper boundary, that is by its rim (this is why we refer to the $k$-subset $I$ as the rim of $L_I$), which is the following directed graph with the leftmost and rightmost vertices identified:  
\[
\begin{tikzpicture}[scale=0.8,baseline=(bb.base),
quivarrow/.style={black, -latex, thick}]
\newcommand{\seventh}{51.4} 
\newcommand{\circradius}{1.5cm}
\newcommand{\inradius}{1.2cm}
\newcommand{\outradius}{1.8cm}
\newcommand{\dotrad}{0.1cm} 
\newcommand{\bdrydotrad}{{0.8*\dotrad}} 
\path (0,0) node (bb) {}; 


\draw (0,2) circle(\bdrydotrad) [fill=black];
\draw (1,1) circle(\bdrydotrad) [fill=black];
\draw (2,2) circle(\bdrydotrad) [fill=black];
\draw (3,3) circle(\bdrydotrad) [fill=black];
\draw (4,2) circle(\bdrydotrad) [fill=black];
\draw (5,1) circle(\bdrydotrad) [fill=black];
\draw (6,2) circle(\bdrydotrad) [fill=black];
\draw (7,3) circle(\bdrydotrad) [fill=black];
\draw (8,4) circle(\bdrydotrad) [fill=black];


\draw [quivarrow,shorten <=5pt, shorten >=5pt] (0,2)
-- node[above]{$1$} (1,1);
\draw [quivarrow,shorten <=5pt, shorten >=5pt] (2,2) -- node[above]{$2$} (1,1);
\draw [quivarrow,shorten <=5pt, shorten >=5pt] (3,3) -- node[above]{$3$} (2,2);
\draw [quivarrow,shorten <=5pt, shorten >=5pt] (3,3) -- node[above]{$4$} (4,2);
\draw [quivarrow,shorten <=5pt, shorten >=5pt] (4,2) -- node[above]{$5$} (5,1);
\draw [quivarrow,shorten <=5pt, shorten >=5pt] (6,2) -- node[above]{$6$} (5,1);
\draw [quivarrow,shorten <=5pt, shorten >=5pt] (7,3) -- node[above]{$7$} (6,2);
\draw [quivarrow,shorten <=5pt, shorten >=5pt] (8,4) -- node[above]{$8$} (7,3);



\end{tikzpicture}
\]

Throughout this paper we will identify a rank 1 module $L_I$ with its rim from the above picture. Moreover, most of the time we will omit the arrows in the rim of $L_I$ and represent it as an undirected graph.
\begin{re} {\rm
Note that we represent a rank 1 module $L_I$ by drawing its rim in the plane and identifying the end points of the rim. Unless specified otherwise, we will assume that the leftmost vertex is the vertex labelled by $n$, and in this case, most of the time we will omit labels on the edges of the rim. If one looks at the rim from left to right, then the number of  downward edges in the rim is equal to $k$ (these are the edges labelled by the elements of $I$), and the number of upward edges of the rim is equal to $n-k$ (these are the edges labelled by the elements that don't belong to $I$). }
\end{re}

\begin{prop}[\cite{JKS}, Proposition 5.2]Every rank $1$ Cohen-Macaulay $B$-module is isomorphic to $L_I$
for some unique $k$-subset $I$ of $C_1$.
\end{prop}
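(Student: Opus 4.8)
\emph{Proof proposal.} The plan is to exploit the fact that a rank $1$ Cohen-Macaulay module is, over the centre, as simple as it can be. Let $M=(M_i\,;\,x_i,y_i)$ be a rank $1$ Cohen-Macaulay $B$-module. By the discussion preceding Definition~\ref{d:moduleMI}, each $M_i$ is a free $Z$-module, and since ${\rm rk}(M)=1$ each $M_i$ is free of rank $1$; fixing isomorphisms $M_i\cong Z=\mathbb{F}[t]$, the maps $x_i\colon M_{i-1}\to M_i$ and $y_i\colon M_i\to M_{i-1}$ become multiplication by elements $\xi_i,\eta_i\in\mathbb{F}[t]$.

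First I would read off local constraints on the $\xi_i$ and $\eta_i$ from the defining relations. Since $t=\sum_{j=1}^n x_jy_j$ and $x_jy_j$ lies in $e_jBe_j$, hence acts as $0$ on $M_i$ for $j\neq i$, the action of $t$ on $M_i$ is multiplication by $\xi_i\eta_i$; as $t$ acts on $M_i=\mathbb{F}[t]$ by multiplication by $t$, this forces $\xi_i\eta_i=t$ for every $i$. Because $\mathbb{F}[t]$ is a principal ideal domain in which $t$ is prime, each $\xi_i$ is therefore a unit times $t^{a_i}$ with $a_i\in\{0,1\}$; write $\xi_i=\alpha_i t^{a_i}$ with $\alpha_i\in\mathbb{F}^{\times}$, so that $\eta_i=\alpha_i^{-1}t^{1-a_i}$. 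Next, evaluating $x^k=y^{n-k}$ at a vertex $i$ yields $\xi_{i+1}\cdots\xi_{i+k}=\eta_i\eta_{i-1}\cdots\eta_{i+k+1}$, a product over an arc of length $k$ equal to the product over the complementary arc of length $n-k$; substituting $\eta_j=t/\xi_j$ and clearing denominators collapses this to the single global identity $\prod_{j=1}^{n}\xi_j=t^{\,n-k}$. Comparing $t$-adic valuations gives $\sum_j a_j=n-k$, so $I:=\{\,j\in C_1 : a_j=0\,\}$ has exactly $k$ elements; comparing leading coefficients gives $\prod_{j=1}^{n}\alpha_j=1$.

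It then remains to produce an isomorphism $M\cong L_I$. Put $c_0=1$ and $c_i=c_{i-1}\alpha_i^{-1}$ for $i=1,\dots,n$; the identity $\prod_j\alpha_j=1$ guarantees $c_n=c_0$, so the scalars $c_i\in\mathbb{F}^{\times}$ are well defined around the circle. Let $\phi_i\colon M_i\to (L_I)_i$ be multiplication by $c_i$. A direct check, using $\xi_i=\alpha_i t^{a_i}$, $\eta_i=\alpha_i^{-1}t^{1-a_i}$ and the defining data of $L_I$ from Definition~\ref{d:moduleMI} (recalling that $a_j=0$ precisely for $j\in I$), shows that $\phi_i x_i=x_i^{L_I}\phi_{i-1}$ and $\phi_{i-1}y_i=y_i^{L_I}\phi_i$ for all $i$, so $\phi=(\phi_i)$ is an isomorphism of $B$-modules. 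Finally, $I$ is recoverable from $L_I$ alone: for each $j$ the cokernel of $x_j\colon e_{j-1}L_I\to e_jL_I$ is zero when $j\in I$ and one-dimensional when $j\notin I$, and this cokernel is an isomorphism invariant, so $L_I\cong L_J$ forces $I=J$.

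The whole argument is a gauge computation, and the one step requiring genuine care is the bookkeeping in the two families of relations: one must identify correctly the two arcs occurring in $x^k=y^{n-k}$ and verify that, after substituting $\eta_j=t/\xi_j$, everything really does reduce to $\prod_j\xi_j=t^{\,n-k}$ --- the step that simultaneously pins down $|I|=k$ and kills the potential obstruction $\prod_j\alpha_j$ to assembling the local maps $\phi_i$ into a global isomorphism. The same proof applies verbatim to $\widehat{B}$, with $\mathbb{F}[t]$ replaced by the discrete valuation ring $\mathbb{F}[[t]]$ with uniformizer $t$, the units now being the power series with nonzero constant term.
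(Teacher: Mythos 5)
The paper itself does not prove this proposition; it is quoted directly from \cite{JKS}, Proposition 5.2, so there is no in-paper argument to compare against. Your proof is correct and complete: identifying each $e_iM$ with $Z=\mathbb{F}[t]$ turns the data into scalars $\xi_i,\eta_i$; the observation that $t$ acts on $e_iM$ as multiplication by $\xi_i\eta_i$ and simultaneously (since $e_iM\cong Z$ as a $Z$-module) as multiplication by $t$ forces $\xi_i\eta_i=t$; irreducibility of $t$ in $\mathbb{F}[t]$ pins down $\xi_i=\alpha_it^{a_i}$ with $a_i\in\{0,1\}$; the relation $x^k=y^{n-k}$, after multiplying through by the complementary $\xi_j$'s, collapses to $\prod_j\xi_j=t^{\,n-k}$, giving both $|I|=k$ (from $t$-adic valuation) and $\prod_j\alpha_j=1$ (from leading coefficients); the latter is exactly the consistency condition needed for the telescoping gauge $c_i=c_{i-1}\alpha_i^{-1}$ to close up around the cycle; and the cokernel of $x_j\colon e_{j-1}L_I\to e_jL_I$ is a legitimate isomorphism invariant distinguishing $j\in I$ from $j\notin I$, giving uniqueness. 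This ``normal form'' computation is the natural route and, to the best of my knowledge, is in substance the argument of \cite{JKS}; your closing remark that the same proof works over $\widehat B$ with $\mathbb{F}[[t]]$ in place of $\mathbb{F}[t]$ is also accurate, since all that is used is that the base is a local PID with maximal ideal generated by $t$.
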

Every $B$-module has a canonical endomorphism given by multiplication by $t\in Z$.
For ${L}_I$ this corresponds to shifting $\mathcal{L}_I$ one step downwards.
Since $Z$ is central, ${\rm Hom}_B(M,N)$ is
a $Z$-module for arbitrary $B$-modules $M$ and $N$.
If $M,N$ are free $Z$-modules, then so is ${\rm Hom}_B(M,N)$. In particular, for rank 1 Cohen-Macaulay $B$-modules $L_I$ and $L_J$, ${\rm Hom}_B(L_I,L_J)$ is a free module of rank 1 over $Z=\mathbb F[t]$, generated by the canonical map given by placing the lattice of $L_I$ inside the lattice of $L_J$ as far up as possible so that no part of the rim of $L_I$ is strictly above the rim of $L_J$.

One sees explicitly that
the algebra $B$ has $n$ indecomposable projective left modules $P_j=Be_j$, corresponding to the vertex idempotents $e_j\in B$, for $j\in C_0$.
Our convention is that representations of the quiver correspond to 
left  $B$-modules. Right $B$-modules are representations of the 
opposite quiver. The projective indecomposable $B$-module $P_j$ is the rank 1 module $L_I$, where $I=\{j+1, j+2, \dots, j+k\}$, so we represent projective indecomposable modules as in the following picture, where $P_5$ is pictured ($n=5$, $k=3$):
\begin{center}
\begin{tikzpicture}[scale=0.8]
\foreach \j in {0,...,5}
  {\path (\j,3.5) node (a) {$\j$};}
\path (0,3) node (a0) {$\bullet$}; \path (5,2) node (a5) {$\bullet$}; 
\foreach \v/\x/\y in
  {a1/1/2, a2/2/1, a3/3/0, a4/4/1, b0/0/1, b1/1/0, b2/2/-1, b4/4/-1, b5/5/0, c0/0/-1}
  {\path (\x,\y) node (\v) {$\bullet$};}
\foreach \j in {1,3,5}
  {\path (\j,-1.5) node {$\vdots$};}
\foreach \t/\h in
  {a0/a1, a1/a2, a2/a3, b0/b1, b1/b2, a3/b4, a4/b5}
  {\path[->,>=latex] (\t) edge[black,thick] node[black,above right=-2pt] {$x$} (\h);}
\foreach \t/\h in
  {a4/a3, a5/a4, b5/b4, a3/b2, a2/b1, a1/b0, b1/c0}
  {\path[->,>=latex] (\t) edge[black,thick] node[black,above left =-3pt] {$y$} (\h);}
\end{tikzpicture}
\end{center}

\section{Periodicity  of rank 1 modules}

In this section we prove that all rank 1 Cohen-Macaulay $B$-modules are periodic, and we give an explicit formula for the periods of these modules in terms of their rims. 

If $L_I$ is a rank 1 module corresponding to a rim $I$, then the projective cover of $L_I$ is given by 
$$\bigoplus_{u\in U} P_u \xrightarrow{\pi}  L_{I}$$ where $U=\{u  \notin I \vert u+1 \in I  \}$, and $\pi$ is given by the canonical maps from $P_u$ to $L_I$, for every $u\in U$, i.e.\ the maps that map $P_u$ to $L_I$ by placing the rim of $P_u$ inside the $L_I$ as far up as possible so that no parts of the rim of $P_u$ are strictly above the rim of $L_I$. In other words, the projective cover is determined by the projective indecomposable modules that correspond to the peaks of the rim of $L_I$. The rank of the projective cover of $L_I$ is equal to the number of the peaks of the rim of $L_I$. Since the rank is additive on short exact sequences, we have that the rank of the kernel of the projective cover of $L_I$, denote it by $r$, is one less than the number of peaks of the rim, that is, if there are $r+1$ peaks on the rim of $L_I$, then the rank of the first syzygy of $L_I$ is $r$.  

Denote the kernel of $\pi$ by $\Omega(L_I)$. To determine the projective cover of  $\Omega(L_I)$ we look at the following picture, where only parts of the lattice of the module $L_I$ are drawn.  The kernel of $\pi$  corresponds to the parts of the lattice $\mathcal{L}_I$ that are on or  below the dashed line. This area corresponds to the part of $L_I$ that is covered by at least two different projective indecomposable modules $P_u$ from the set $U$. For example, the dot that is singled out below the dashed line in the picture represents an element of $L_I$ that is covered both by  $P_{10}$ and $P_8$.

\[
\begin{tikzpicture}[scale=0.75,baseline=(bb.base),
quivarrow/.style={black, -latex, thick}]
\newcommand{\seventh}{51.4} 
\newcommand{\circradius}{1.5cm}
\newcommand{\inradius}{1.2cm}
\newcommand{\outradius}{1.8cm}
\newcommand{\dotrad}{0.1cm} 
\newcommand{\bdrydotrad}{{0.8*\dotrad}} 
\path (0,0) node (bb) {}; 


\draw (0,2) circle(\bdrydotrad) [fill=black];
\draw (1,1) circle(\bdrydotrad) [fill=black];
\draw (2,0) circle(\bdrydotrad) [fill=black];
\draw (3,1) circle(\bdrydotrad) [fill=black];
\draw (4,0) circle(\bdrydotrad) [fill=black];
\draw (5,1) circle(\bdrydotrad) [fill=black];
\draw (6,2) circle(\bdrydotrad) [fill=black];
\draw (7,3) circle(\bdrydotrad) [fill=black];
\draw (8,4) circle(\bdrydotrad) [fill=black];
\draw (9,3) circle(\bdrydotrad) [fill=black];
\draw (10,4) circle(\bdrydotrad) [fill=black];
\draw (11,3) circle(\bdrydotrad) [fill=black];
\draw (12,2) circle(\bdrydotrad) [fill=black];
\draw (13,3) circle(\bdrydotrad) [fill=black];
\draw (14,2) circle(\bdrydotrad) [fill=black];
\draw (15,3) circle(\bdrydotrad) [fill=black];



\draw [quivarrow,shorten <=5pt, shorten >=5pt] (0,2)
-- node[above]{$1$} (1,1);
\draw [quivarrow,shorten <=5pt, shorten >=5pt] (1,1) -- node[above]{$2$} (2,0);
\draw [quivarrow,shorten <=5pt, shorten >=5pt] (3,1) -- node[above]{$3$} (2,0);
\draw [quivarrow,shorten <=5pt, shorten >=5pt] (3,1) -- node[above]{$4$} (4,0);
\draw [quivarrow,shorten <=5pt, shorten >=5pt] (5,1) -- node[above]{$5$} (4,0);
\draw [quivarrow,shorten <=5pt, shorten >=5pt] (6,2) -- node[above]{$6$} (5,1);
\draw [quivarrow,shorten <=5pt, shorten >=5pt] (7,3) -- node[above]{$7$} (6,2);
\draw [quivarrow,shorten <=5pt, shorten >=5pt] (8,4) -- node[above]{$8$} (7,3);
\draw [quivarrow,shorten <=5pt, shorten >=5pt] (8,4) -- node[above]{$9$} (9,3);
\draw [quivarrow,shorten <=5pt, shorten >=5pt] (10,4) -- node[above]{$10$} (9,3);
\draw [quivarrow,shorten <=5pt, shorten >=5pt] (10,4) -- node[above]{$11$} (11,3);
\draw [quivarrow,shorten <=5pt, shorten >=5pt] (11,3) -- node[above]{$12$} (12,2);
\draw [quivarrow,shorten <=5pt, shorten >=5pt] (13,3) -- node[above]{$13\,\,$} (12,2);
\draw [quivarrow,shorten <=5pt, shorten >=5pt] (13,3) -- node[above]{$14\,\,$} (14,2);
\draw [quivarrow,shorten <=5pt, shorten >=5pt] (15,3) --node[above]{$15\,\,$}(14,2);


\draw[dashed] (0,0)--(1,-1)--(2,0)--(3,-1)--(4,0)--(5,-1)--(9,3)--(11,1)--(12,2)--(13,1)--(14,2)--(15,1);

\draw (9,1) circle(\bdrydotrad) [fill=black]; 



\draw [dotted] (0,-5) -- (0,2);
\draw [dotted] (15,-5) -- (15,3);

\draw [dashed] (8,-5) -- (8,-4);

\end{tikzpicture}
\]
\vspace{3mm}

It follows that the projective cover of $\Omega(L_I)$ is determined by the low points of the rim of $L_I$. We call these points the valleys of the rim $I$.  It is clear that there are as many low points on the rim as there are peaks. Hence, the projective cover of $\Omega(L_I)$ is a module of rank $r+1$ isomorphic to the direct sum 
$$\bigoplus_{v\in V} P_v,$$ where $V=\{v  \in I \vert v+1 \notin I  \}$.

Again, because the rank is additive on short exact sequences, it follows that the kernel of the projective cover of $\Omega(L_I)$ is a rank 1 module. This means that there is a $k$-subset of  $\{1,2,\dots,n\}$, denoted by $I^2$,  such that this kernel (the second syzygy of $L_I$), denoted by $\Omega^2(L_I)$, is isomorphic to $L_{I^2}$, i.e.\  $\Omega^2(L_I)\cong L_{I^2}$. Using the same arguments, the projective cover of $\Omega^2(L_I)\cong L_{I^2}$ is a module of rank $r+1$, and the kernel of this projective cover, denoted by $\Omega^3(L_I)$, is a rank $r$ module, and the kernel of the projective cover of $\Omega^3(L_I)$ is a rank $1$ module, denoted by $\Omega^4(L_I)$, and it is isomorphic to $L_{I^4}$  for  some $k$-subset $I^4$ of $\{1,2,\dots,n\}$. If we continue this construction of the minimal projective resolution of $L_I$, every other kernel will be a rank 1 module. 

Since there are only finitely many rank 1 modules (they are in  bijection with $k$-subsets of $\{1,2,\dots,n\}$), we must have that the projective resolution of $L_I$ is periodic. That is, for some indices $a$ and $b$, $a\neq b$, it holds that $\Omega^a(L_{I})\cong \Omega^b(L_{I})$, with $\Omega^a(L_{I})$ denoting the $a$th syzygy of $L_I$. In fact, we are going to prove a stronger statement that for some index $t$, we have that $\Omega^{t}(L_I)\cong L_I$. The rest of this section is devoted to determining the minimal such an index $t$. 

Obviously, when $\Omega^1(L_I)$ is of rank greater than 1, $t$ must be an even number. Thus,  we have to consider separately the case when $\Omega^1(L_I)$ is a rank 1 module, because in this case in each step of the minimal projective resolution we get kernels that are rank 1 modules, so it can happen that in an odd number of steps we get a kernel that is isomorphic to $L_I$, as we will see in the upcoming example.

\begin{ex}\label{ex1}
{\rm  Let  $n=6, k=4,$ and $I=\{1,2,4,5\}$. In this case, the number of peaks on the rim of $L_I$ is equal to 2. For every $i$, $\Omega^i({L_I})$ is a rank 1 module. 

The rims of the rank 1 modules $\Omega^i({L_I})$, for $i=1,2,3$, are depicted with different types of lines in the following picture, with the dashed rim representing the rim of $\Omega^1({L_I})$, the thin lined rim representing the rim of $\Omega^2({L_I})$, and the dotted rim representing the rim of $\Omega^3({L_I})$. We see from the picture  that $\Omega^3(L_I)\cong L_I$, and that the period  of $L_I$ is 3.} 

\[
\begin{tikzpicture}[scale=0.8,baseline=(bb.base),
quivarrow/.style={black, -latex, thick}]
\newcommand{\seventh}{51.4} 
\newcommand{\circradius}{1.5cm}
\newcommand{\inradius}{1.2cm}
\newcommand{\outradius}{1.8cm}
\newcommand{\dotrad}{0.1cm} 
\newcommand{\bdrydotrad}{{0.8*\dotrad}} 
\path (0,0) node (bb) {}; 


\draw (0,5) circle(\bdrydotrad) [fill=black];
\draw (1,4) circle(\bdrydotrad) [fill=black];
\draw (2,3) circle(\bdrydotrad) [fill=black];
\draw (3,4) circle(\bdrydotrad) [fill=black];
\draw (4,3) circle(\bdrydotrad) [fill=black];
\draw (5,2) circle(\bdrydotrad) [fill=black];
\draw (6,3) circle(\bdrydotrad) [fill=black];

\draw (2,3) circle(\bdrydotrad) [fill=black];
\draw[dashed] (0,3)--(1,2)--(2,3)--(3,2)--(4,1)--(5,2)--(6,1); 
\draw[-, very thin] (0,1)--(1,2)--(2,1)--(3,0)--(4,1)--(5,0)--(6,-1); 
\draw[very thick, dotted] (0,1)--(1,0)--(2,-1)--(3,0)--(4,-1)--(5,-2)--(6,-1); 




\draw [quivarrow,shorten <=5pt, shorten >=5pt] (0,5)
-- node[above]{$1$} (1,4);
\draw [quivarrow,shorten <=5pt, shorten >=5pt] (1,4) -- node[above]{$2$} (2,3);
\draw [quivarrow,shorten <=5pt, shorten >=5pt] (3,4) -- node[above]{$3$} (2,3);
\draw [quivarrow,shorten <=5pt, shorten >=5pt] (3,4) -- node[above]{$4$} (4,3);
\draw [quivarrow,shorten <=5pt, shorten >=5pt] (4,3) -- node[above]{$5$} (5,2);
\draw [quivarrow,shorten <=5pt, shorten >=5pt] (6,3) -- node[above]{$6$} (5,2);





\draw [dotted] (0,-4) -- (0,5);
\draw [dotted] (6,-4) -- (6,3);

\draw [dashed] (3,-4) -- (3,-3);

\end{tikzpicture}
\]
\vspace{2mm} 
\end{ex}

Before moving on to the general case when $\Omega(L_I)$ is a module of rank 1, let us introduce some of the notation used in this section. 

If $I$ is a $k$-subset of $\{1,2,\dots,n\}$ such that the kernel of the projective cover of  $L_I$ is a rank $r$ module, then $I$ can be written as a disjoint union of $r+1$ segments $A_1, A_2, \dots, A_{r+1}$, where $A_i=[a_i,b_i]$, and $a_{i+1}-b_i>1$, for all $i$. We can also assume without loss of generality that $a_1=1$, because we can always assume that 0 is one of the peaks of the rim $I$, by renumbering if necessary. The size of the segment $A_i$ is denoted by $d_i$, and the difference $a_{i+1}-b_i-1$ is denoted by $l_i$. If one considers the rim of the module $L_I$, it is clear that the numbers $d_i$ (respectively \ $l_i$) represent the sizes of downward slopes (respectively upward slopes) of the rim, when looked at from left to right. Also, $\sum d_i=k$, and $\sum l_i=n-k$.

\begin{ex}
{\rm
Continuing the previous example, we have that $I$ is the union   $I=\{1,2\}\cup \{4,5\}$, and  $r+1=2$. There are two downward slopes, both of length 2, i.e.\ $d_1=d_2=2$, and there are two upward slopes, both of length 1, i.e.\ $l_1=l_2=1.$}
\end{ex}

\subsection{Kernels of rank 1}

We will start by dealing with the case when ${\rm rk}\, \Omega(L_I)=1$, i.e.\ the case when we have only two peaks on the rim of $L_I$. In this situation, there are positive integers $d_1,d_2,l_1, l_2$, such that $$I=A_1\cup A_2=\{1,2,\dots,d_1\}\cup \{d_1+l_1+1,d_1+l_1+2,\dots,d_1+l_1+d_2\}.$$  

A part of the lattice of $L_I$ is drawn in the following picture (note that the actual lengths of the downward and upward slopes of the rim of $L_I$ are $d_1$ and $d_2$ for the downward slopes, and $l_1$ and $l_2$ for the upward slopes). 

\[
\begin{tikzpicture}[scale=0.8,baseline=(bb.base),
quivarrow/.style={black, -latex, thick}]
\newcommand{\seventh}{51.4} 
\newcommand{\circradius}{1.5cm}
\newcommand{\inradius}{1.2cm}
\newcommand{\outradius}{1.8cm}
\newcommand{\dotrad}{0.1cm} 
\newcommand{\bdrydotrad}{{0.8*\dotrad}} 
\path (0,0) node (bb) {}; 


\draw (-1,6) circle(\bdrydotrad) [fill=black];
\draw (-1,4) circle(\bdrydotrad) [fill=black];

\draw (0,5) circle(\bdrydotrad) [fill=black];
\draw (1,4) circle(\bdrydotrad) [fill=black];
\draw (2,3) circle(\bdrydotrad) [fill=black];
\draw (3,4) circle(\bdrydotrad) [fill=black];
\draw (4,3) circle(\bdrydotrad) [fill=black];
\draw (5,2) circle(\bdrydotrad) [fill=black];

\draw (0,3) circle(\bdrydotrad) [fill=black];
\draw (1,2) circle(\bdrydotrad) [fill=black];
\draw (2,3) circle(\bdrydotrad) [fill=black];
\draw (3,2) circle(\bdrydotrad) [fill=black];
\draw (4,1) circle(\bdrydotrad) [fill=black];
\draw (6,1) circle(\bdrydotrad) [fill=black];
\draw (7,2) circle(\bdrydotrad) [fill=black];
\draw (5,0) circle(\bdrydotrad) [fill=black];
\draw (7,0) circle(\bdrydotrad) [fill=black];

\draw[dashed] (-1,4)--(0,3)--(1,2)--node[right]{$l_2\,\,\,\,\,\,\,\,\,\,\,\,\,\,\, $}(2,3)--(3,2);
\draw[dotted] (3,2)--(4,1);
\draw[dashed](4,1)--(5,0)--node[right]{$l_1$}(6,1)--(7,0); 
\draw[-,  thick] (-1,2)--(0,1)--node[right]{$l_1\,\,\,\,\,\,\,\,\,\,\,\,\,\,\, $}(1,2)--(2,1);
\draw[dotted](2,1)--(3,0);
\draw[-,  thick] (3,0)--(4,-1)--node[right]{$l_2\,\,\,\,\,\,\,\,\,\,\,\,\,\,\, $}(5,0)--(6,-1)--(7,0); 



\draw [quivarrow,shorten <=5pt, shorten >=5pt] (-1,6) -- node[above]{$1$} (0,5);
\draw [dotted,shorten <=5pt, shorten >=5pt] (0,5)
-- node[above]{} (1,4);
\draw [quivarrow,shorten <=5pt, shorten >=5pt] (1,4) -- node[above]{$d_1$} (2,3);
\draw [quivarrow,shorten <=5pt, shorten >=5pt] (3,4) -- node[above]{} (2,3);
\draw [quivarrow,shorten <=5pt, shorten >=5pt] (3,4) -- node[above]{$\,\,\,\,\,\,\,\quad\,\,\,\,\,\,\,\,\,\,d_1+l_1+1$} (4,3);
\draw [shorten <=5pt, shorten >=5pt,dotted] (4,3) -- node[above]{} (5,2);
\draw [quivarrow,shorten <=5pt, shorten >=5pt] (7,2) -- node[above]{$n$} (6,1);
\draw [quivarrow,shorten <=5pt, shorten >=5pt] (5,2) -- node[above]{$$} (6,1);





\draw [dotted] (-1,-4) -- (-1,6);
\draw [dotted] (7,-4) -- (7,6);

\draw [dashed] (3,-4) -- (3,-3);

\end{tikzpicture}
\]

The projective cover of $L_I$ is $P_0\oplus P_{d_1+l_1}$. The kernel of the projective cover is a rank 1 module whose rim is given by reading off its peaks from the rim of $L_I$, that is, by reading off the valleys of the rim of $L_I$, and it is depicted by the dashed line in the above picture.  By looking at the above picture we see that the rim of $\Omega^1(L_I)$ has its peaks at $d_1$ and $d_1+l_1+d_2$. Thus, $\Omega^1(L_I)\cong L_{I^1}$, where $I^1=\{ 1-l_2, 2-l_2, \dots, d_1-l_2  \} \cup \{ d_1+1, d_1+2,\dots, d_1+d_2\}$, with the addition being modulo $n$. The rim of $\Omega^1(L_I)$, drawn by the dashed line in the above picture, is obtained from the rim of $L_I$ by taking for its peaks the valleys of the rim of $L_I$, and by shifting the upward slopes of the rim of $L_I$ to the right, meaning that the upward slope that started at the $i$th valley  (reading from left to right) of the rim of $L_I$ now starts at the end of the $i+1$th downward slope in the rim of $\Omega(L_I)$, as in the above picture.  We obtained that $I^1=A_1^1\cup A_2^1=\{ 1-l_2, 2-l_2, \dots, d_1-l_2  \} \cup \{ d_1+1, d_1+2,\dots, d_1+d_2\}$ and that the gap (the length of the upward slope) between $A_1^1$, which is a set of size $d_1$, and $A_2^1$, which is a set of size $d_2$, is $l_2$. 

If we now compute the projective cover of $L_{I^1}$, by using the same arguments we get that the kernel of this projective cover, $\Omega^2(L_{I})$,  is isomorphic to $L_{I^2}$, where $I^2=A_1^2\cup A_2^2=\{ 1-l_2-l_1, 2-l_2-l_1, \dots, d_1-l_2-l_1  \} \cup \{ d_1+1-l_2, d_1+2-l_2,\dots, d_1+d_2-l_2\}$. The rim of $L_{I^2}$ is drawn by the thick line in the above picture. Using that $l_1+l_2=n-k$ and adding modulo $n$, we get that $I^2=\{ 1+k, 2+k, \dots, d_1+k  \} \cup \{ d_1+l_1+1+k, d_1+l_1+2+k,\dots, d_1+d_2+l_1+k\}$, and the gap between $A_1^2$ and $A_2^2$ is $l_1$. Repeating this procedure we get an explicit description of the kernels appearing in the minimal projective resolution of $L_I$, i.e.\ we get $\Omega^m(L_I)\cong L_{I^m}$, where $I^m=A_1^m\cup A_2^m $. After even number of steps $2t$, we get that 
$\Omega^{2t}(L_{I})\cong L_{I^{2t}}$, where $$I^{2t}=\{ 1+tk, 2+tk, \dots, d_1+tk  \} \cup \{ d_1+l_1+1+tk, \dots, d_1+d_2+l_1+tk\},$$ and the gap between $A_1^{2t}$ and $A_2^{2t}$ is $l_1$.  After odd number of steps we get that 
$\Omega^{2t+1}(L_{I})\cong L_{I^{2t+1}}$, where 
 $$I^{2t+1}= \{ 1-l_2+tk, 2-l_2+tk, \dots, d_1-l_2+tk  \} \cup$$ $$ \{ d_1+1+tk, d_1+2+tk,\dots, d_1+d_2+tk\},$$ and the gap between $A_1^{2t+1}$ and $A_2^{2t+1}$ is $l_2$.

\begin{te}
Let $L_I$ be a rank $1$ module whose kernel of its projective cover is a module of rank $1$, and let $\Omega^m(L_I)$ be as above. It holds that $L_I\cong \Omega^{2n/(n,k)}(L_I)$. The minimal projective resolution of $L_I$ is periodic with period dividing $2n/(n,k)$. 
\end{te}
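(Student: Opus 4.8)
The plan is to read the theorem off from the explicit recursive description of the syzygies $\Omega^m(L_I)$ established just above, and then to upgrade the single congruence $\Omega^{2n/(n,k)}(L_I)\cong L_I$ to the divisibility statement for the period by a short formal argument.

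First I would observe that, for an even number of steps $m=2t$, the set $I^{2t}$ computed above is nothing but the cyclic shift of $I=I^0$ by $tk$. Indeed, comparing $I^0=\{1,\dots,d_1\}\cup\{d_1+l_1+1,\dots,d_1+l_1+d_2\}$ with the formula $I^{2t}=\{1+tk,\dots,d_1+tk\}\cup\{d_1+l_1+1+tk,\dots,d_1+d_2+l_1+tk\}$ obtained above, where all entries are reduced modulo $n$, gives $I^{2t}=I+tk$ as $k$-subsets of $\mathbb Z_n$. Since a rank $1$ Cohen-Macaulay module is determined by its rim (every such module is $L_J$ for a unique $k$-subset $J$), this means $\Omega^{2t}(L_I)\cong L_{I+tk}$ for every $t\ge 0$. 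Now take $t=n/(n,k)$, which is a positive integer; then $tk=\tfrac{n}{(n,k)}\,k=n\cdot\tfrac{k}{(n,k)}$ is a multiple of $n$, so $I+tk=I$ in $\mathbb Z_n$, and therefore $\Omega^{2n/(n,k)}(L_I)\cong L_{I+tk}=L_I$. In fact $n/(n,k)$ is the smallest positive $t$ with $n\mid tk$, which explains the appearance of $2n/(n,k)$; whether a smaller, necessarily odd, period occurs is governed by the odd-step formula above and does not affect the present statement.

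To obtain the claim about the period, I would use that the minimal projective cover of a finitely generated module (and hence of each syzygy) is unique up to isomorphism, so $\Omega$ is well defined on isomorphism classes and satisfies $\Omega^{a+b}(-)\cong\Omega^a(\Omega^b(-))$. Hence the set $S=\{N>0:\Omega^N(L_I)\cong L_I\}$ is closed under taking the difference of a larger element and a smaller one: if $a<b$ lie in $S$, then $\Omega^{b-a}(L_I)\cong\Omega^{b-a}(\Omega^a(L_I))\cong\Omega^b(L_I)\cong L_I$, so $b-a\in S$. An elementary argument then gives $S=p\,\mathbb Z_{>0}$, where $p=\min S$ is by definition the period of the minimal projective resolution of $L_I$, and since $2n/(n,k)\in S$ by the previous paragraph we conclude $p\mid 2n/(n,k)$.

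The substantive work has already been done above, in the inductive identification of $\Omega^m(L_I)$ with the explicit rank $1$ module $L_{I^m}$; granting that, the theorem reduces to the arithmetic remark that the even syzygies are cyclic shifts of $I$ by multiples of $k$. The only point that needs a little care is the last step: the congruence $\Omega^{2n/(n,k)}(L_I)\cong L_I$ by itself only says that $2n/(n,k)$ is \emph{a} period, and deducing that the \emph{minimal} period divides it requires the closure of $S$ under differences, i.e.\ the relation $\Omega^{a+b}\cong\Omega^a\Omega^b$ together with uniqueness of minimal projective covers.
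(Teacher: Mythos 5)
Your proof is correct and takes essentially the same route as the paper: both read off from the explicit formula $I^{2t}=I+tk$ that taking $t=n/(n,k)$ gives $I^{2t}=I$, hence $\Omega^{2n/(n,k)}(L_I)\cong L_I$. You additionally spell out the standard argument (closure under differences of $\{N>0:\Omega^N(L_I)\cong L_I\}$, using uniqueness of minimal projective covers) that upgrades ``$2n/(n,k)$ is a period'' to ``the minimal period divides $2n/(n,k)$,'' a step the paper leaves implicit.
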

\begin{proof}
Keeping the notation from the above discussion, if we set $t=n/(n,k)$, then  $A_1=A_1^{2t},\, A_2=A_2^{2t}$, i.e.\ $I=I^{2t}$.  This means that $L_I\cong \Omega^{2n/(n,k)}(L_I)$.  
\end{proof}

We will now proceed by giving the explicit formula for the period of a rank 1 module with the kernel of the projective cover of rank 1 as well.  We are looking for a minimal index $m$ such that $I^m=I$. 

If $d_1\neq d_2$ and $l_1\neq l_2$, then $m$
has to be an even number in order for the upward slopes to be in the correct order. The condition $A_1=A_1^m$ is equivalent to the condition $km/2 \equiv 0 \mod  n$ which we get from the requirement that the smallest elements of $A_1$ and $A_1^m$ are equal.  Hence, in this case $m=2t$, where $t$ is the minimal positive integer such that $kt \equiv 0\mod  n$, i.e.\ 
\begin{equation}\label{case1} m=2 n/(n,k),\end{equation} with $(n,k)$ being the greatest common divisor of $n$ and $k$. If $(n,k)=1$, then  we obtain $2n$, which is the upper bound from the previous theorem.  

If $d_1=d_2$ and $l_1\neq l_2$, then, in the general case, $m$ could either be even or odd. If $m$ is even, then as in the previous case it is equal to  $2 n/(n,k)$. If $m=2t+1$ is odd, then the gap between $A_1^m$ and $A_2^m$ is $l_2$, forcing that  $A_2^m=A_1$ and $A_1^m=A_2$. This is equivalent to saying that $d_1+1+tk\equiv 1 \mod n$, hence $m=2t+1$, where $t$ is the minimal positive integer such that  $d_1+tk\equiv 0 \mod n$. Therefore, in this case 
\begin{equation}\label{case2} m=\min\{   2 n/(n,k),   2\min\{t\,\vert\,  d_1+tk\equiv 0 \!\!\!\!\mod n\}  +1\}. \end{equation} 

If $d_1\neq d_2$ and $l_1=l_2$, then, in the general case, $m$ could either be even or odd, since the gaps between $A_1^i$ and $A_2^i$ are in the right order for every $i$. In this case it must be $A_1^m=A_1$. This condition is equivalent to  the condition  $1\equiv 1+tk \mod n$ when $m=2t$ is even, and  $1+tk-l_2 \equiv 1 $ when $m=2t+1$ is odd. Therefore, in this case 
\begin{equation}\label{case3} m=\min\{   2 n/(n,k),  2 \min\{t\,\vert\,  tk-l_2 \equiv 0\!\!\!\! \mod n\} +1 \}. \end{equation} 

We are left with the most complicated case when $d_1=d_2$ and $l_1=l_2$. Again, $m$ could be either even or odd, but also, it can either be that $A_1=A_1^m$ and $A_1=A_2^m$, because the gaps will be in the right order,  and $A_1^m$ could be each of the sets $A_1$ and $A_2$.  If $A_1=A_1^m$, then  $1+tk-l_2 \equiv 1\!\! \mod n$ when $m=2t+1$ is odd, and $1+tk \equiv 1\!\! \mod n$ when $m=2t $   is even. If  $A_2=A_1^m$, then   $d_1+1+tk \equiv 1 \!\!\mod n$ when $m=2t+1$ is odd, and $1+d_1+l_1+tk \equiv 1 \!\!\mod n$ when $m=2t $   is even. Hence, in this case $m$ is a divisor of $2n/(n,k)$ given by 
\begin{equation}\label{case4}\min \left\{    \begin{array}{l}  
2 n/(n,k),\\ 
 2 \min\{t\,\vert \, tk-l_2 \equiv 0\!\! \mod n\} +1, \\
 2\min\{t\,\vert \, d_1+tk\equiv 0\!\! \mod n\}  +1, \\
  2\min\{t\,\vert \, d_1+l_1+tk\equiv 0\!\! \mod n\}.   \end{array} \right. \end{equation}

We summarize our results in the following theorem. 

\begin{te}
Let $L_I$ be a rank $1$ Cohen-Macaulay module, and let $d_1,d_2$ and $l_1,l_2$ be as above.  Depending on whether $d_1=d_2$ or not, and $l_1=l_2$ or not, the period of the module $L_I$ is given by equations $(\ref{case1})$, $(\ref{case2})$, $(\ref{case3})$ and $(\ref{case4})$. 
\end{te}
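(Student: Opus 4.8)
The plan is to treat this as a direct bookkeeping verification built on the explicit syzygy description preceding the theorem. We have $\Omega^m(L_I)\cong L_{I^m}$ with $I^m = A_1^m \cup A_2^m$, and from the discussion we have closed forms: for $m=2t$, $I^{2t} = \{1+tk,\dots,d_1+tk\}\cup\{d_1+l_1+1+tk,\dots,d_1+d_2+l_1+tk\}$ with gap $l_1$; for $m=2t+1$, $I^{2t+1} = \{1-l_2+tk,\dots,d_1-l_2+tk\}\cup\{d_1+1+tk,\dots,d_1+d_2+tk\}$ with gap $l_2$ (all arithmetic mod $n$). The period is the minimal $m$ with $I^m=I$, so the whole argument reduces to: (i) deciding for which parities $m$ an equality $I^m=I$ is even possible, and (ii) within each allowable parity, solving the resulting linear congruence for the smallest $t$, then taking the overall minimum.

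First I would record the parity constraint. Since $L_{I^m}$ has gap $l_1$ when $m$ is even and $l_2$ when $m$ is odd, a necessary condition for $I^m=I$ when $m$ is odd is that the multiset of gaps of $I$ read cyclically is invariant under the relevant relabeling — concretely that $l_1=l_2$ or that $d_1 = d_2$ (so that swapping $A_1\leftrightarrow A_2$ is allowed). This is exactly the case split in the statement. When $d_1\neq d_2$ and $l_1\neq l_2$, only even $m$ can work, and matching the smallest element of $A_1$ with that of $A_1^{2t}$ gives $tk\equiv 0 \pmod n$, whence $m = 2n/(n,k)$, which is \eqref{case1}. When $d_1=d_2$ but $l_1\neq l_2$, the even case again gives $2n/(n,k)$; for odd $m=2t+1$, the gap $l_2$ forces $A_2^m=A_1$, i.e. $d_1+1+tk\equiv 1\pmod n$, so $d_1+tk\equiv 0\pmod n$, and taking the minimum yields \eqref{case2}. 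When $d_1\neq d_2$ but $l_1=l_2$, the gap condition is automatically satisfied for both parities, and one forces $A_1^m=A_1$: for $m=2t$ this is $tk\equiv 0$, for $m=2t+1$ this is $tk-l_2\equiv 0\pmod n$, giving \eqref{case3}. Finally, when $d_1=d_2$ and $l_1=l_2$, all four sub-possibilities (even/odd, $A_1^m$ matching $A_1$ or $A_2$) are a priori allowed; writing the congruence in each case ($tk\equiv 0$; $tk-l_2\equiv 0$; $d_1+tk\equiv 0$; $d_1+l_1+tk\equiv 0$, mod $n$) and taking the minimum produces \eqref{case4}.

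For each case I would also verify that the $t$ produced genuinely achieves $I^m=I$ and not merely the single anchoring equality — i.e. that matching the smallest element of one segment plus matching the gap and segment sizes (which hold by construction of $I^m$) forces the full equality of $k$-subsets. This is immediate because a rank $1$ module's rim is determined by one peak position together with the ordered sequence of slope lengths $(d_1,l_1,d_2,l_2)$, and $I^m$ has the same $d_i$'s as $I$ by construction, the same $l_i$'s up to the parity-dependent cyclic shift already accounted for, so one coincidence of anchor points closes the argument. I would also note $m$ divides $2n/(n,k)$ in every case: any admissible odd solution $2t+1$ must, when doubled around the cycle, return to $I$, forcing $2(2t+1)$ — hence $m$ — to be a divisor of $2n/(n,k)$ by the previous theorem.

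The only genuinely delicate point — the ``main obstacle'' — is the exhaustiveness of the case analysis in the symmetric case $d_1=d_2$, $l_1=l_2$: one must be sure that no further coincidental alignment (beyond the four listed) can occur, and that the four congruences really are independent possibilities whose minimum is attained. I would dispatch this by observing that $I^m=I$ requires the cyclic word of slopes of $I^m$ to coincide with that of $I$ as a based cyclic word up to choosing which peak of $I^m$ maps to which peak of $I$; since $I$ has exactly two peaks, there are exactly two choices of based alignment, and combined with the two parities this gives exactly the four cases of \eqref{case4}, with no others. Everything else is the routine solution of linear congruences modulo $n$, already carried out in the displayed equations. $\qed$
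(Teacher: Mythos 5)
Your proof is correct and follows essentially the same line as the paper: using the explicit formulas for $I^{2t}$ and $I^{2t+1}$, split on the four parity/segment-matching possibilities (constrained by which gap length and segment size can coincide), solve the resulting linear congruences modulo $n$, and take the minimum. The only cosmetic difference is that your side remark on divisibility by $2n/(n,k)$ is phrased loosely (the minimal period divides any return time, not the other way round), but this does not affect the main argument, which the paper also presents as the case analysis preceding the theorem statement.
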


This completes our determination of the periods for the rank 1 Cohen-Macaulay modules whose kernels of the projective cover are also rank 1 modules. For four different cases studied above, in general, we have four different formulas for computation of the period of a given rank 1 module. 

\begin{ex}

{\rm In the Example \ref{ex1} we had $n=6, k=4$ and a rank 1 Cohen-Macaulay module $L_I$ with the rim $ I=\{1,2,4,5\}$, and $d_1=d_2=2$ and $l_1=l_2=1$. In this case the period of $L_I$ is given by equation (\ref{case4}). For $t=1$, we have that $d_1+kt\equiv 0 \!\mod 6$, meaning that the period of the module $L_I$ is 3. }

\end{ex}

\begin{ex} {\rm 
Let  $n=6, k=3$ and $I=\{1,2,5\}$. In this case we have that $d_1=2\neq d_2=1$, $l_1=2\neq l_2=1$. Since $k=3$, it follows that the period of $L_I$ is $m=2n/(n,k)=4$. The rims of $\Omega^i({L_I})$ are depicted in different types of lines in the following picture, with thick dashed line representing the rim of $\Omega^4(L_I)$, which is isomorphic to $L_I$.}

\[
\begin{tikzpicture}[scale=0.8,baseline=(bb.base),
quivarrow/.style={black, -latex, thick}]
\newcommand{\seventh}{51.4} 
\newcommand{\circradius}{1.5cm}
\newcommand{\inradius}{1.2cm}
\newcommand{\outradius}{1.8cm}
\newcommand{\dotrad}{0.1cm} 
\newcommand{\bdrydotrad}{{0.8*\dotrad}} 
\path (0,0) node (bb) {}; 


\draw (0,5) circle(\bdrydotrad) [fill=black];
\draw (4,5) circle(\bdrydotrad) [fill=black];
\draw (5,4) circle(\bdrydotrad) [fill=black];
\draw (6,5) circle(\bdrydotrad) [fill=black];

\draw (1,4) circle(\bdrydotrad) [fill=black];
\draw (2,3) circle(\bdrydotrad) [fill=black];
\draw (3,4) circle(\bdrydotrad) [fill=black];
\draw (2,1) circle(\bdrydotrad) [fill=black];
\draw (0,1) circle(\bdrydotrad) [fill=black];
\draw (6,3) circle(\bdrydotrad) [fill=black];
\draw (6,-1) circle(\bdrydotrad) [fill=black];
\draw (4,-1) circle(\bdrydotrad) [fill=black];
\draw (0,-1) circle(\bdrydotrad) [fill=black];
\draw (5,-2) circle(\bdrydotrad) [fill=black];
\draw (2,-3) circle(\bdrydotrad) [fill=black];

\draw (0,3) circle(\bdrydotrad) [fill=black];
\draw (1,2) circle(\bdrydotrad) [fill=black];
\draw (2,3) circle(\bdrydotrad) [fill=black];
\draw (3,2) circle(\bdrydotrad) [fill=black];
\draw (5,0) circle(\bdrydotrad) [fill=black];
\draw (6,1) circle(\bdrydotrad) [fill=black];
\draw[dashed] (0,3)--(1,2)--(2,3)--(3,2)--(4,3)--(5,4)--(6,3); 
\draw[dotted, thick] (0,1)--(1,2)--(2,1)--(3,2)--(4,1)--(5,0)--(6,1); 
\draw[thick] (0,-1)--(2,1)--(3,0)--(4,-1)--(5,0)--(6,-1); 
\draw[dashed, very thick] (0,-1)--(2,-3)--(4,-1)--(5,-2)--(6,-1);




\draw [quivarrow,shorten <=5pt, shorten >=5pt] (0,5)
-- node[above]{$1$} (1,4);
\draw [quivarrow,shorten <=5pt, shorten >=5pt] (1,4) -- node[above]{$2$} (2,3);
\draw [quivarrow,shorten <=5pt, shorten >=5pt] (3,4) -- node[above]{$3$} (2,3);
\draw [quivarrow,shorten <=5pt, shorten >=5pt] (4,5) -- node[above]{$5$} (5,4);
\draw [quivarrow,shorten <=5pt, shorten >=5pt] (4,5) -- node[above]{$4$} (3,4);
\draw [quivarrow,shorten <=5pt, shorten >=5pt] (6,5) -- node[above]{$6$} (5,4);





\draw [dotted] (0,-4) -- (0,5);
\draw [dotted] (6,-4) -- (6,5);

\draw [dashed] (3,-4) -- (3,-3);

\end{tikzpicture}
\]

\end{ex}

\subsection{Kernels  of rank greater than 1}

We now assume that $I$ is such that the kernel of the projective cover of $L_I$ is of rank greater than 1, i.e.\ the rim of $L_I$ has three or more peaks, and we set ${\rm rk}\, \Omega(L_I)=r>1.$ 

From the above discussion we have that every other kernel in the projective resolution of $L_I$ is a rank 1 module. If $I$ is a disjoint union of segments $A_1, A_2,\dots, A_{r+1}$, then we assume that $A_i$ has $d_i$ elements and that the gap between $A_{i}$ and $A_{i+1}$ is of size $l_i$. Also, we  can assume without loss of generality that the smallest element in $A_1$ is 1, i.e.\  $A_1=\{1,2,\dots, d_1\}$, $A_2=\{d_1+l_1+1,\dots, d_1+l_1+d_2\}, \dots , $  $A_{r+1}=\{ \sum_{i=1}^r d_i+\sum_{i=1}^r l_i+1,\dots, \sum_{i=1}^r d_i+\sum_{i=1}^r l_i+d_{r+1} \}$.

A projective resolution of $L_{I}$ is 
\[
\bigoplus_{v\in V} P_v \xrightarrow{D} \bigoplus_{u\in U} P_u \to  L_{I} \to 0,
\]
where $U=\{ u\not\in I : u+1 \in I\}$ and $V=\{ v \in I : v+1 \not\in I\}$.
Note that $U$ and $V$ are disjoint sets with the same number of elements, 
which alternate in the cyclic order.
This number is $r+1$, where $r={\rm rk}\, \Omega (L_{I})$
and $\Omega (L_{I})={\rm im}\, D$ is the first syzygy.
The $(r+1)\times(r+1)$ matrix $D=(d_{vu})$ has only non-zero entries
when $u,v$ are adjacent in $U\cup V$.
More precisely,
\begin{equation}\label{eq:Dcoeffs}
 d_{vu} = \begin{cases}
 x^{v-u} & \text{when $u$ precedes $v$,}\\
 -y^{u-v} & \text{when $u$ follows $v$,}\\
  0 & \text{otherwise.}
 \end{cases}
\end{equation}
Here, $x$ and $y$ should be interpreted as $x_i$ and $y_j$ for appropriate indices $i$ and $j$.

Thus, we can assume that the matrix $D$ is supported on just two cyclic 
diagonals. Hence, it is of the following form (with omitted entries all equal to zero):
\[
 \begin{pmatrix}
 \bullet &&& \bullet  \\
 \bullet & \bullet && \\
 & \ddots & \ddots &  \\
 & & \bullet & \bullet
 \end{pmatrix}.
\]

Note that the lower cyclic diagonal contains the top right entry of the matrix.

\subsubsection{The kernel of the projective cover $D$} We proceed by computing the kernel of the above mentioned map $D$ from the projective resolution of $L_I$. We know that this kernel is a rank 1 module.  If $I=\{a_1,a_2,\dots,a_h\}$, then we set  $I+k=\{a_1+k,a_2+k,\dots,a_h+k\}$.

\begin{prop} The rim of the second syzygy of $L_I$ is the rim $I$ shifted by $k$, that is, the rim of $\Omega^2(L_I)$ is $I+k$.
\end{prop}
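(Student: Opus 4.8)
The plan is to generalize the explicit computation carried out in the rank-1 syzygy subsection to the case $r > 1$, exploiting the fact that the second syzygy is again a rank-1 module and hence determined by a single $k$-subset. Concretely, I would argue that $\Omega^2(L_I) = \ker D$ can be identified by locating its rim directly from the matrix $D$ in \eqref{eq:Dcoeffs}, reading off the peaks of the rim of $\ker D$ from the structure of the cokernel data. The first step is to recall, as in the two-peak case, that the projective cover of $\Omega(L_I) = \operatorname{im} D$ is $\bigoplus_{v \in V} P_v$, where $V = \{v \in I : v+1 \notin I\}$ is the set of valleys of the rim of $L_I$; this was established in the preceding discussion, and it does not use $r = 1$. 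Therefore $\Omega^2(L_I)$ is precisely $\ker D$, a rank-1 module, so it equals $L_{I^2}$ for a unique $k$-subset $I^2$, and it remains only to compute $I^2$.

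Next I would determine $I^2$ combinatorially. By the remark following \eqref{eq:Dcoeffs}, $D$ is supported on two cyclic diagonals, with entries $x^{v-u}$ (when $u$ precedes $v$) and $-y^{u-v}$ (when $u$ follows $v$). The key observation is that the rim of $\ker D$ is built from the rim of $\Omega(L_I)$ by the same "shift the upward slopes one step to the right" operation described in the two-peak case: the valleys of the rim of $L_{I}$ become the peaks of the rim of $\Omega(L_I)$, then the peaks of $\Omega(L_I)$ become the valleys of the rim of $L_{I^2}$, and in each pass every upward slope of length $l_i$ is carried to the gap following the next downward slope. Tracking the smallest element of $A_1 = \{1, \dots, d_1\}$ through two such passes: the first pass shifts it by $-l_{r+1}$ (the upward slope "wrapping around" before $A_1$), and the second pass shifts it by $-l_r$ relative to the intermediate configuration — but using $\sum_i l_i = n-k$ and adding modulo $n$, the net effect of the two passes on every $A_i$ is a uniform shift by $+k$. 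This is exactly the same cancellation $l_1 + l_2 = n - k$ that appeared in the two-peak computation of $I^2$, now with $r+1$ segments and the telescoping sum $\sum_{i=1}^{r+1} l_i = n - k$. Hence $I^2 = I + k$, with addition modulo $n$.

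For rigor I would verify the shift claim at the level of lattices rather than purely pictorially: writing $\Omega^2(L_I) = \ker D \subseteq \bigoplus_{v\in V} P_v$ explicitly, an element of $e_j \ker D$ for each vertex $j \in C_0$ is a tuple $(f_v)_{v \in V}$ of polynomials in $\mathbb{F}[t]$ satisfying the two-term relations coming from the rows of $D$, and the minimal such tuple (generating $e_j \ker D$ freely over $\mathbb{F}[t]$, since the module is rank 1 Cohen–Macaulay) has $t$-degree equal to the height of the rim of $L_{I+k}$ over column $j$. Comparing these degrees against the lattice $\mathcal{L}_{I+k}$ — equivalently, checking that $x_i, y_i$ act on $\ker D$ by multiplication by $1$ resp.\ $t$ exactly according to whether $i \in I + k$ — pins down $\ker D \cong L_{I+k}$ by the uniqueness in Proposition (\cite{JKS}, Proposition 5.2). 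The main obstacle I anticipate is precisely this bookkeeping: making the "shift the upward slopes" description precise enough that the index arithmetic (especially the wrap-around term $l_{r+1}$ and its interaction with the assumption $a_1 = 1$) is unambiguous, and confirming that the degree-counting argument genuinely forces the generator rather than merely being consistent with it. Once the two-pass shift is established for one segment, additivity of rank and the uniqueness of rank-1 modules do the rest with no further computation.
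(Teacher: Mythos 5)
Your first paragraph is fine and matches the paper: the projective cover of $\Omega(L_I)$ is $\bigoplus_{v\in V} P_v$, and by rank additivity $\Omega^2(L_I)=\ker D$ has rank $1$, hence equals $L_{I^2}$ for a unique $k$-subset $I^2$. The problem is in the second paragraph, where the ``two-pass shift'' is used to identify $I^2$.

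The issue is that the pass operation you describe (``valleys become peaks, each upward slope of length $l_i$ slides to the gap after the next downward slope'') produces, after two iterations, the shift $A_i \mapsto A_i - l_{i-1} - l_{i-2}$ (indices taken cyclically in $\{1,\dots,r+1\}$). For this to equal $A_i + k = A_i - (n-k)$ uniformly you would need $l_{i-1} + l_{i-2} = n-k = \sum_{j=1}^{r+1} l_j$ for every $i$, which forces all the other $l_j$ to vanish, i.e.\ $r=1$. The invocation of ``$\sum_i l_i = n-k$'' is therefore a red herring: only two of the $r+1$ summands appear after two passes, so the telescoping that works in the two-peak case simply does not happen for $r>1$, and the shifts are not even uniform across segments. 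Underlying this is a conceptual problem: when $r>1$, $\Omega(L_I)$ has rank $r>1$, so there is no ``rim of $\Omega(L_I)$'' in the sense of a single $k$-subset, and the intermediate pass has no module-theoretic meaning. Your third paragraph (computing $e_j\ker D$ column by column and comparing $t$-degrees against $\mathcal{L}_{I+k}$) is a legitimate route and is close in spirit to what a rigorous version would require, but you do not carry it out and explicitly flag it as the obstacle.

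The paper avoids the intermediate rim entirely. It identifies $\Omega^2(L_I)=\ker D$ with the diagonally embedded intersection of the rims of the projectives $P_v$, $v\in V$, and observes that each $P_v$ contributes a downward slope of length $k$ to the right of its peak $v$; assembling only these length-$k$ down-slopes reproduces the original rim $I$ translated by $k$. This single envelope argument gives the uniform shift directly, with no appeal to a first-syzygy rim and no step at which the gaps $l_i$ would need to telescope.
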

\begin{proof} 
If we fix a valley $v$ of the rim $I$, then the elements of  the module $P_v$, where $P_v$ is a summand of  $\bigoplus_{v\in V} P_v$, are mapped by the map $D$ to two projective modules $P_{u_{vl}}$ and $P_{u_{vr}}$, where $u_{vl}$ denotes the peak that is to the left of $v$ and $u_{vr}$ denotes the peak that is to the right of the valley $v$. So for a given peak $u$, only two $P_vs$ are mapped to $P_u$. 

For example, if we look at the rim from the following picture, for $P_{10}$, only $P_{9}$ and $P_{12}$ are mapped into $P_{10}$.  So the parts of $P_{12}$ that are potentially in the kernel of $D$ are the ones lying on or below the thick dotted line corresponding to $P_{9}$ in the below picture. Also, $P_{12}$ is mapped into $P_{13}$ so the same has to hold with respect to $P_{14}$, the only parts of $P_{12}$ that are candidates for the kernel are the ones on or below the thin dotted  line corresponding to $P_{14}$, i.e.\ the parts below both thick dotted line and thin dotted line. But not everything below the thin dotted line is a candidate for the kernel since the only legitimate candidates from $P_{14}$ are the ones below the black thin line corresponding to  $P_2$,  and so on. We conclude that we obtain the kernel of $D$ by reading off its rim from the rim of $I$ by taking all the elements that are below the rims of all projective indecomposable modules $P_v$, $v\in V$ (in other words, that belong to the intersection of all projective indecomposable modules), i.e. below all lines in the following picture.

\[
\begin{tikzpicture}[scale=0.8,baseline=(bb.base),
quivarrow/.style={black, -latex, thick}]
\newcommand{\seventh}{51.4} 
\newcommand{\circradius}{1.5cm}
\newcommand{\inradius}{1.2cm}
\newcommand{\outradius}{1.8cm}
\newcommand{\dotrad}{0.1cm} 
\newcommand{\bdrydotrad}{{0.8*\dotrad}} 
\path (0,0) node (bb) {}; 


\draw (0,2) circle(\bdrydotrad) [fill=black];
\draw (1,1) circle(\bdrydotrad) [fill=black];
\draw (2,0) circle(\bdrydotrad) [fill=black];
\draw (3,1) circle(\bdrydotrad) [fill=black];
\draw (4,0) circle(\bdrydotrad) [fill=black];
\draw (5,1) circle(\bdrydotrad) [fill=black];
\draw (6,2) circle(\bdrydotrad) [fill=black];
\draw (7,3) circle(\bdrydotrad) [fill=black];
\draw (8,4) circle(\bdrydotrad) [fill=black];
\draw (9,3) circle(\bdrydotrad) [fill=black];
\draw (10,4) circle(\bdrydotrad) [fill=black];
\draw (11,3) circle(\bdrydotrad) [fill=black];
\draw (12,2) circle(\bdrydotrad) [fill=black];
\draw (13,3) circle(\bdrydotrad) [fill=black];
\draw (14,2) circle(\bdrydotrad) [fill=black];
\draw (15,3) circle(\bdrydotrad) [fill=black];




\draw [quivarrow,shorten <=5pt, shorten >=5pt] (0,2)
-- node[above]{$1$} (1,1);
\draw [quivarrow,shorten <=5pt, shorten >=5pt] (1,1) -- node[above]{$2$} (2,0);
\draw [quivarrow,shorten <=5pt, shorten >=5pt] (3,1) -- node[above]{$3$} (2,0);
\draw [quivarrow,shorten <=5pt, shorten >=5pt] (3,1) -- node[above]{$4$} (4,0);
\draw [quivarrow,shorten <=5pt, shorten >=5pt] (5,1) -- node[above]{$5$} (4,0);
\draw [quivarrow,shorten <=5pt, shorten >=5pt] (6,2) -- node[above]{$6$} (5,1);
\draw [quivarrow,shorten <=5pt, shorten >=5pt] (7,3) -- node[above]{$7$} (6,2);
\draw [quivarrow,shorten <=5pt, shorten >=5pt] (8,4) -- node[above]{$8$} (7,3);
\draw [quivarrow,shorten <=5pt, shorten >=5pt] (8,4) -- node[above]{$9$} (9,3);
\draw [quivarrow,shorten <=5pt, shorten >=5pt] (10,4) -- node[above]{$10$} (9,3);
\draw [quivarrow,shorten <=5pt, shorten >=5pt] (10,4) -- node[above]{$11$} (11,3);
\draw [quivarrow,shorten <=5pt, shorten >=5pt] (11,3) -- node[above]{$12$} (12,2);
\draw [quivarrow,shorten <=5pt, shorten >=5pt] (13,3) -- node[above]{$13\,\,$} (12,2);
\draw [quivarrow,shorten <=5pt, shorten >=5pt] (13,3) -- node[above]{$14\,\,$} (14,2);
\draw [quivarrow,shorten <=5pt, shorten >=5pt] (15,3) --node[above]{$15\,\,$}(14,2);

\draw[black][dashed, very thin] (0,-4)--(4,0)--(11,-7)--(15,-3); 
\draw[dotted, ultra thick] (0,-4)--(1,-5)--(2,-4)--(3,-3)--(4,-2)--(5,-1)--(9,3)--(11,1)--(12,0)--(13,-1)--(14,-2)--(15,-3);
\draw[dashed, very thick] (0,-2)--(4,-6)--(12,2)--(15,-1);
\draw[dotted, thin] (0,0)--(6,-6)--(14,2)--(15,1);
\draw[thin] (0,-2)--(2,0)--(9,-7)--(15,-1);


\draw[very thick] (0,-4.2)--(1,-5.2)--(2,-4.2)--(4,-6.2)--(5,-5.2)--(6,-6.2)--(7,-5.2)--(9,-7.2)--(10,-6.2)--(11,-7.2)--(15,-3.2);

\draw [dotted] (0,-10) -- (0,2);
\draw [dotted] (15,-10) -- (15,3);

\draw [dashed] (8,-10) -- (8,-9);

\end{tikzpicture}
\]
\vspace{2mm}

Now, the rim of the area below all rims of the projective indecomposable modules corresponding to the valleys of $I$ is nothing but the rim $I$ shifted by $k$ to the right (or by $n-k$ to the left). This rim is depicted by the thick black line. To see this, we notice that for each valley  $v$ of the rim $I$, in order to draw a corresponding projective indecomposable module we draw a line to the right of $v$ of size $k$, and a line to the left of the size $n-k$. If we only observe lines that we draw to the right (or  to the left) of the valleys, it is obvious that we end up with the rim that is the same as the initial rim $I$, only shifted to the right by $k$ (or to the left by $n-k$).   
\end{proof}
  
\begin{re}\label{embedding} {\rm As a submodule of $\bigoplus_{v\in V} P_v$, $\Omega^2(L_I)$ is given as a diagonally embedded copy, with $\Omega^2(L_I)$ seen as a submodule of each $P_v$ by a canonical injective map given by placing the rim of $\Omega^2(L_I)$ inside the $P_v$ as high as possible. }
\end{re}

As in the case when $r=1$,  we have that $\Omega^2(L_I)\cong L_{I^2}$, where $I^2$ is a union of the following sets (with addition modulo $n$):
$$A_1^2=\{1+k,2+k,\dots, d_1+k\},$$ 
$$A_2^2=\{d_1+l_1+1+k,d_1+l_1+2+k,\dots, d_1+l_1+d_2+k\},$$
$$\vdots$$
$$A_{r+1}^2=\{\sum_{i=1}^r d_i+\sum_{i=1}^{r} l_i+1+k,\dots, \sum_{i=1}^r d_i+\sum_{i=1}^{r} l_i+d_{r+1}+k \}.$$

We obtained $I^2$ from $I$ by adding $k$ to each element in a given segment. In  other words, just as in the case when $r=1$, we obtain $I^2$ from $I$ by shifting the rim to the right by $k$. 

If we repeat this procedure, after $2t$ steps we have that $\Omega^{2t}(L_I)\cong L_{I^{2t}}$, where $I^{2t}$ is the union of the following sets:
$$A_1^{2t}=\{1+kt,\dots, d_1+kt\},$$ 
$$A_2^{2t}=\{d_1+l_1+1+kt,d_1+l_1+2+kt,\dots, d_1+l_1+d_2+kt\},$$
$$\vdots$$
$$A_{r+1}^{2t}=\{\sum_{i=1}^r d_i+\sum_{i=1}^{r} l_i+1+kt,\dots, \sum_{i=1}^r d_i+\sum_{i=1}^{r} l_i+d_{r+1}+kt  \}.$$

\begin{te}
Let $L_I$ be a rank $1$ module whose kernel of the projective cover is of rank greater than $1$. Then $\Omega^{2n/(n,k)}(L_I)\cong L_I$ and the minimal projective cover of $L_I$ is periodic with period being an even number dividing $2n/(n,k)$. Moreover, the period $m$ is given by 
\begin{equation}\label{komplikovana} m=2\min \scalebox{1.7}{\{}  t\, \vert\, \exists c\in [1,r+1]\,  {\rm s.t.}\, d_{c+i}=d_{1+i}, \, l_{c+i}=l_{1+i}\,\, ({\rm for}\,\, i\in \overline{0,r}),$$ $${\rm and}\,\,\,   kt \equiv \sum_{i=1}^{c-1} (d_i+l_i)\!\!\!\mod n \scalebox{1.7}{\}}. 
\end{equation}
\end{te}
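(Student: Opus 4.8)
The plan is to reduce the statement to a purely combinatorial question: for which $t$ is the translate $I+kt$ of $I$ equal to $I$ inside $\mathbb{Z}_n$? By the discussion preceding the theorem, $\Omega^{2t}(L_I)\cong L_{I^{2t}}$ with $I^{2t}=I+kt$, while every odd syzygy has rank $r={\rm rk}\,\Omega(L_I)>1$ and hence cannot be isomorphic to the rank $1$ module $L_I$. So the period is even, say $m=2t$, and by the uniqueness of the $k$-subset attached to a rank $1$ module (\cite{JKS}, Proposition 5.2) we have $\Omega^m(L_I)\cong L_I$ if and only if $I+kt=I$ in $\mathbb{Z}_n$. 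Thus $m=2t_0$, where $t_0$ is the least positive $t$ with $I+kt=I$.

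First I would note that $T=\{t\in\mathbb{Z}:I+kt=I\}$ is a subgroup of $\mathbb{Z}$: it is clearly closed under addition, and $I+kt=I$ forces $I-kt=I$, so it is closed under negation. Hence $T=t_0\mathbb{Z}$. Since $t=n/(n,k)$ gives $kt\equiv 0\pmod n$, we get $n/(n,k)\in T$, so $t_0\mid n/(n,k)$; this reproves that $m$ is even and divides $2n/(n,k)$, and also shows the minimum in $(\ref{komplikovana})$ is attained, since $c=1$ together with $t=n/(n,k)$ always satisfies the listed conditions.

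The core step is the characterization: $I+kt=I$ in $\mathbb{Z}_n$ if and only if there is $c\in[1,r+1]$ with $d_{c+i}=d_{1+i}$ and $l_{c+i}=l_{1+i}$ for all $0\le i\le r$ (indices cyclically mod $r+1$) and $kt\equiv\sum_{i=1}^{c-1}(d_i+l_i)\pmod n$. For the forward implication, translation by $kt$ is an orientation-preserving bijection of the circle $\mathbb{Z}_n$ taking the disjoint, cyclically ordered maximal segments $A_1,\dots,A_{r+1}$ of $I$ to the maximal segments $A_1+kt,\dots,A_{r+1}+kt$ of $I+kt$, still in cyclic order; if the two systems of segments coincide, they must be matched by a single cyclic rotation, i.e.\ $A_j+kt=A_{j+c-1}$ for all $j$ and a fixed $c$. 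Comparing lengths of matched segments gives $d_j=d_{j+c-1}$, comparing the gaps between consecutive matched segments gives $l_j=l_{j+c-1}$, and comparing the left endpoint of $A_1$ (equal to $1$) with that of $A_c$ (equal to $1+\sum_{i=1}^{c-1}(d_i+l_i)$) gives the congruence. For the converse, assume the three conditions; I claim $A_j+kt=A_{j+c-1}$ for every $j$. The left endpoint of $A_j$ is $1+\sum_{i=1}^{j-1}(d_i+l_i)$, so the difference of the left endpoints of $A_{j+c-1}$ and $A_j$ is $f(j):=\sum_{i=j}^{j+c-2}(d_i+l_i)$; the periodicity of the $d_i$ and $l_i$ gives $f(j+1)-f(j)=(d_{j+c-1}+l_{j+c-1})-(d_j+l_j)=0$, so $f$ is constant, equal to $f(1)=\sum_{i=1}^{c-1}(d_i+l_i)\equiv kt\pmod n$. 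Together with $d_j=d_{j+c-1}$ this gives $A_j+kt=A_{j+c-1}$, hence $I+kt=\bigcup_j(A_j+kt)=\bigcup_j A_{j+c-1}=I$. Combining with the previous paragraph, $t_0$ is exactly the minimum in $(\ref{komplikovana})$, so $m=2t_0$ is as claimed.

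The main obstacle is this converse half: one has to verify that the local data --- periodicity of the size/gap sequence together with the single congruence fixing the global offset --- really forces the two interval systems to coincide, which is where the telescoping identity $f(j+1)=f(j)$ does the work. Some care is also needed with the two layers of modular arithmetic (segment indices modulo $r+1$, positions modulo $n$) and with the observation that the permutation of the segments induced by a rigid rotation of the circle is necessarily cyclic.
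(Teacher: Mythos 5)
Your proof is correct and takes essentially the same approach as the paper: both reduce the question to asking when the shifted rim $I+kt$ coincides with $I$, and both characterize this by matching the maximal segments via a single cyclic rotation and comparing segment sizes, gap lengths, and a left-endpoint congruence. The only difference is that you spell out the converse direction via the telescoping identity $f(j+1)=f(j)$, a step the paper leaves implicit when it asserts the equivalence of its two lists of conditions, and your subgroup observation $T=t_0\mathbb{Z}$ gives a slightly cleaner derivation of the divisibility $m\mid 2n/(n,k)$.
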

\begin{proof} We are looking for a minimal positive integer $m=2t$ such that $I^{2t}=I$.  Looking at $I^2$ we see that the gap between $A_i^2$ and $A_{i+1}^2$ is $l_{i}$, and in the general case, the gap between $A_i^{2t}$ and $A_{i+1}^{2t}$ is $l_{i}$.  
In order to have $I^{2t}=I$, it must hold that there is some $c\in \{1,2,\dots,n\}$ such that 
$A_{1+i}^{2t}=A_{c+i}$, for $i=\overline{0,r}$, i.e.\ we must have equality of segments, and also we must have gaps between segments in the right order, i.e.\ it must hold that $l_{c+i}=l_{1+i}$, for $i=\overline{0,r}$. In other words, if $(l_1,l_2,\dots,l_{r+1})$ is an $(r+1)$-tuple of upward segment lengths, after an appropriate cyclic permutation of this tuple it must be that $(l_1,l_2,\dots,l_{r+1})=(l_c,l_{c+1}, \dots, l_{c+r})$. The same holds for downward segments.  The above conditions are equivalent to the conditions 
$$A_{c+i}=A_{1+i}^{2t}, \,\,\,\,\,\,\, l_{c+i}=l_{1+i} \,\,\, (i=\overline{0,r}),$$ or to the conditions 
$$A_{c}=A_{1}^{2t}, \,\,\,\,\,\,\, d_{c+i}=d_{1+i}, \, l_{c+i}=l_{1+i} \,\,\, (i=\overline{0,r}).$$
The equality of sets $A_c=A_1^{2t}$ holds if $d_1=d_c$ and $1+kt \equiv \sum_{i=1}^{c-1} (d_i+l_i)+1 \mod n.$

We notice here that if we take $t=n/(n,k)$, and $c=1$, then we have $d_{c+i}=d_{1+i}$ and $l_{c+i}=l_{1+i}$ for all $i$, and 
$kt \equiv 0 \mod n$, i.e.\  $A_{1+i}^{2n}=A_{1+i}$ for all $i$, and $I=I^{2n/(n,k)}$.  Thus, the upper bound for the period $m$ is $2n/(n,k)$.  It is clear that the period must be even, because every other syzygy is a rank 1 module.

\end{proof}

As a corollary to the previous theorem we immediately get a well known result. 
\begin{co} The  algebra $B$  has infinite global dimension.
\end{co}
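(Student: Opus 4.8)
The plan is to exhibit a single $B$-module whose minimal projective resolution is infinite, and the most economical choice is precisely a rank $1$ module $L_I$. By the periodicity theorems just proved, for any $k$-subset $I$ we have $\Omega^{2n/(n,k)}(L_I)\cong L_I$, and more generally $\Omega^m(L_I)\cong L_{I^m}$ is a nonzero rank $1$ (or, on odd steps in the higher-rank case, rank $r$) Cohen--Macaulay module for every $m\ge 0$; in particular no syzygy of $L_I$ is ever zero.

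First I would note that $L_I$ is not projective unless $I$ is of the special form $\{j+1,\dots,j+k\}$, but even in that case the argument below still applies, so in fact we may pick any rank $1$ module at all. The key point is that the minimal projective resolution
\[
\cdots \longrightarrow P^{(2)} \longrightarrow P^{(1)} \longrightarrow P^{(0)} \longrightarrow L_I \longrightarrow 0
\]
constructed in this section has $\Omega^m(L_I)\neq 0$ for all $m$: indeed ${\rm rk}\,\Omega^m(L_I)\in\{1,r\}$ by the rank count (the rank is additive on short exact sequences and the rank of each projective cover is $r+1$), and a Cohen--Macaulay module of positive rank is certainly nonzero since it is free of positive rank over $Z$. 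Hence the resolution does not terminate, so $\operatorname{pd}_B L_I=\infty$, and therefore $\operatorname{gl.dim} B=\infty$.

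The only thing to check carefully is that the resolution we have written down is indeed the \emph{minimal} one, so that non-termination of \emph{this} complex forces $\operatorname{pd}_B L_I=\infty$ rather than merely giving one non-minimal infinite resolution. This is immediate from the construction: at each stage the map $\bigoplus_{v\in V}P_v\to\bigoplus_{u\in U}P_u$ (and its analogues) was taken to be a projective \emph{cover}, i.e. the projective modules are indexed exactly by the peaks of the current rim and no smaller projective module surjects onto the syzygy; equivalently, the differentials have entries in the radical of $B$ (they are the monomials $x^{v-u}$, $-y^{u-v}$ of positive degree appearing in \eqref{eq:Dcoeffs}), so the complex is minimal. I expect no real obstacle here; the one-line subtlety to spell out is simply the observation that positive-rank Cohen--Macaulay modules are nonzero, which guarantees the syzygies never vanish.

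\begin{proof}
Fix any $k$-subset $I$ of $C_1$ and consider the rank $1$ Cohen--Macaulay module $L_I$. By the construction of its minimal projective resolution carried out in this section, the $m$-th syzygy $\Omega^m(L_I)$ is a Cohen--Macaulay $B$-module whose rank is $1$ when $m$ is even and either $1$ or $r$ when $m$ is odd (where $r+1$ is the number of peaks of the rim of $L_I$); in all cases ${\rm rk}\,\Omega^m(L_I)\ge 1$, using that the rank is additive on short exact sequences. A Cohen--Macaulay $B$-module of positive rank is free of positive rank over $Z=\mathbb{F}[t]$, hence nonzero. Therefore $\Omega^m(L_I)\neq 0$ for every $m\ge 0$, so the minimal projective resolution of $L_I$ does not terminate and $\operatorname{pd}_B L_I=\infty$. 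Consequently $\operatorname{gl.dim} B=\infty$.
\end{proof}
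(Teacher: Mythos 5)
Your proof takes essentially the same route the paper has in mind: exhibit a rank~1 module whose minimal projective resolution never terminates, so $\operatorname{pd}_B L_I=\infty$ and hence ${\rm gl.dim}\,B=\infty$. In fact your formal argument is a shade more elementary than the paper's implicit one, since you do not invoke the periodicity theorem itself but only the observation that the ranks of the syzygies stay positive, by additivity of rank on short exact sequences together with the rank counts already established in the section (projective covers of rank $r+1$, alternating syzygy ranks $r$ and $1$). That is a clean simplification.

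There is, however, one genuine gap: the assertion that ``we may pick any rank $1$ module at all,'' repeated as ``Fix any $k$-subset $I$'' in the formal proof, is false. If $I$ is a cyclic interval $\{j+1,\dots,j+k\}$, then $L_I=P_j$ is projective, the rim has a single peak, so $r=0$ and $\Omega^1(L_I)=0$; your claimed bound ``${\rm rk}\,\Omega^m(L_I)\ge 1$ in all cases'' fails exactly because the odd syzygies have rank $r=0$. You must fix an $I$ that is not a cyclic interval, so that the rim of $L_I$ has at least two peaks and $r\ge 1$; then all syzygy ranks are $1$ or $r\ge 1$, each $\Omega^m(L_I)$ is nonzero (being free of positive rank over $Z=\mathbb F[t]$), and the conclusion follows. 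Note also that such an $I$ exists only when $2\le k\le n-2$; outside that range every rank~1 module is projective and a different witness would be required (a caveat the paper itself leaves implicit).
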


\begin{ex}
{\rm 

Let $n=15$, $k=7$, and $I=\{1,2,4,9,11,12,14\}.$ Since $I=\{1,2\}\cup\{4\}\cup\{9\}\cup\{11,12\}\cup\{14\}$, we have that $r+1=5$, and that the $(r+1)$-tuples of lengths of  downward and upward slopes are $(2,1,1,2,1)$ and $(1,4,1,1,1)$ respectively. Since the only $c$ for which $d_1=d_c$ is either 1 or 4, we either have $A_1^m=A_1$ or $A_1^m=A_4$. Since the cyclic tuple of upward lengths starting at $A_4$ is  $(1,1,1,4,1)$ which is not equal to $(1,4,1,1,1)$, it must be the case that $A_1^m=A_1$.  By the previous theorem  it must be that  $7t\cong 0 \mod 15$.  We are left to find the smallest $t$ such that $kt\cong 0 \mod n $, which is obviously 15 since $(15,7)=1$.  It follows that the period $m$ of $L_I$ is $30$, which is the upper bound $2n$ from the previous theorem. 
}

\[
\begin{tikzpicture}[scale=0.8,baseline=(bb.base),
quivarrow/.style={black, -latex, thick}]
\newcommand{\seventh}{51.4} 
\newcommand{\circradius}{1.5cm}
\newcommand{\inradius}{1.2cm}
\newcommand{\outradius}{1.8cm}
\newcommand{\dotrad}{0.1cm} 
\newcommand{\bdrydotrad}{{0.8*\dotrad}} 
\path (0,0) node (bb) {}; 


\draw (0,2) circle(\bdrydotrad) [fill=black];
\draw (1,1) circle(\bdrydotrad) [fill=black];
\draw (2,0) circle(\bdrydotrad) [fill=black];
\draw (3,1) circle(\bdrydotrad) [fill=black];
\draw (4,0) circle(\bdrydotrad) [fill=black];
\draw (5,1) circle(\bdrydotrad) [fill=black];
\draw (6,2) circle(\bdrydotrad) [fill=black];
\draw (7,3) circle(\bdrydotrad) [fill=black];
\draw (8,4) circle(\bdrydotrad) [fill=black];
\draw (9,3) circle(\bdrydotrad) [fill=black];
\draw (10,4) circle(\bdrydotrad) [fill=black];
\draw (11,3) circle(\bdrydotrad) [fill=black];
\draw (12,2) circle(\bdrydotrad) [fill=black];
\draw (13,3) circle(\bdrydotrad) [fill=black];
\draw (14,2) circle(\bdrydotrad) [fill=black];
\draw (15,3) circle(\bdrydotrad) [fill=black];


\draw (0,-2) circle(\bdrydotrad) [fill=black];
\draw (1,-1) circle(\bdrydotrad) [fill=black];
\draw (2,-2) circle(\bdrydotrad) [fill=black];
\draw (3,-1) circle(\bdrydotrad) [fill=black];
\draw (4,-2) circle(\bdrydotrad) [fill=black];
\draw (5,-1) circle(\bdrydotrad) [fill=black];
\draw (7,-3) circle(\bdrydotrad) [fill=black];
\draw (11,1) circle(\bdrydotrad) [fill=black];
\draw (12,0) circle(\bdrydotrad) [fill=black];
\draw (13,1) circle(\bdrydotrad) [fill=black];
\draw (15,-1) circle(\bdrydotrad) [fill=black];


\draw [quivarrow,shorten <=5pt, shorten >=5pt] (0,2)
-- node[above]{$1$} (1,1);
\draw [quivarrow,shorten <=5pt, shorten >=5pt] (1,1) -- node[above]{$2$} (2,0);
\draw [quivarrow,shorten <=5pt, shorten >=5pt] (3,1) -- node[above]{$3$} (2,0);
\draw [quivarrow,shorten <=5pt, shorten >=5pt] (3,1) -- node[above]{$4$} (4,0);
\draw [quivarrow,shorten <=5pt, shorten >=5pt] (5,1) -- node[above]{$5$} (4,0);
\draw [quivarrow,shorten <=5pt, shorten >=5pt] (6,2) -- node[above]{$6$} (5,1);
\draw [quivarrow,shorten <=5pt, shorten >=5pt] (7,3) -- node[above]{$7$} (6,2);
\draw [quivarrow,shorten <=5pt, shorten >=5pt] (8,4) -- node[above]{$8$} (7,3);
\draw [quivarrow,shorten <=5pt, shorten >=5pt] (8,4) -- node[above]{$9$} (9,3);
\draw [quivarrow,shorten <=5pt, shorten >=5pt] (10,4) -- node[above]{$10$} (9,3);
\draw [quivarrow,shorten <=5pt, shorten >=5pt] (10,4) -- node[above]{$11$} (11,3);
\draw [quivarrow,shorten <=5pt, shorten >=5pt] (11,3) -- node[above]{$12$} (12,2);
\draw [quivarrow,shorten <=5pt, shorten >=5pt] (13,3) -- node[above]{$13\,\,$} (12,2);
\draw [quivarrow,shorten <=5pt, shorten >=5pt] (13,3) -- node[above]{$14\,\,$} (14,2);
\draw [quivarrow,shorten <=5pt, shorten >=5pt] (15,3) --node[above]{$15\,\,$}(14,2);


\draw[dotted, thick] (0,0)--(1,-1)--(2,0)--(3,-1)--(4,0)--(5,-1)--(9,3)--(11,1)--(12,2)--(13,1)--(14,2)--(15,1);


\draw[dashed, thick] (0,-2)--(1,-1)--(2,-2)--(3,-1)--(4,-2)--(5,-1)--(7,-3)--(11,1)--(12,0)--(13,1)--(15,-1);

\draw [dotted] (0,-5) -- (0,2);
\draw [dotted] (15,-5) -- (15,3);

\draw [dashed] (8,-5) -- (8,-4);

\end{tikzpicture}
\]

\end{ex}

\begin{ex}{\rm 
Let $n=8$, $k=4$ and $I=\{1,3,5,7\}$. In this case $r=3$. From the below picture it is obvious that $\Omega^2(L_I)\cong L_I$, i.e.\ the period of $L_I$ is $m=2$, which is the lower bound from the previous theorem, i.e.\ the minimal possible value for the period of a rank 1 module.   
}

\[
\begin{tikzpicture}[scale=0.8,baseline=(bb.base),
quivarrow/.style={black, -latex, thick}]
\newcommand{\seventh}{51.4} 
\newcommand{\circradius}{1.5cm}
\newcommand{\inradius}{1.2cm}
\newcommand{\outradius}{1.8cm}
\newcommand{\dotrad}{0.1cm} 
\newcommand{\bdrydotrad}{{0.8*\dotrad}} 
\path (0,0) node (bb) {}; 


\draw (0,0) circle(\bdrydotrad) [fill=black];
\draw (0,2) circle(\bdrydotrad) [fill=black];
\draw (1,1) circle(\bdrydotrad) [fill=black];
\draw (2,0) circle(\bdrydotrad) [fill=black];
\draw (2,2) circle(\bdrydotrad) [fill=black];
\draw (3,1) circle(\bdrydotrad) [fill=black];
\draw (4,0) circle(\bdrydotrad) [fill=black];
\draw (4,2) circle(\bdrydotrad) [fill=black];
\draw (5,1) circle(\bdrydotrad) [fill=black];
\draw (6,0) circle(\bdrydotrad) [fill=black];
\draw (6,2) circle(\bdrydotrad) [fill=black];
\draw (7,1) circle(\bdrydotrad) [fill=black];
\draw (7,-1) circle(\bdrydotrad) [fill=black];
\draw (8,2) circle(\bdrydotrad) [fill=black];
\draw (8,0) circle(\bdrydotrad) [fill=black];
\draw (1,-1) circle(\bdrydotrad) [fill=black];
\draw (3,-1) circle(\bdrydotrad) [fill=black];
\draw (5,-1) circle(\bdrydotrad) [fill=black];


\draw [quivarrow,shorten <=5pt, shorten >=5pt] (0,2)-- node[above]{$1$} (1,1);
\draw [quivarrow,shorten <=5pt, shorten >=5pt,dashed] (1,1) -- node[above]{$1$} (0,0);
\draw [quivarrow,shorten <=5pt, shorten >=5pt] (2,2) -- node[above]{$2$} (1,1);
\draw [quivarrow,shorten <=5pt, shorten >=5pt,dashed] (1,1) -- node[above]{$2$} (2,0);
\draw [quivarrow,shorten <=5pt, shorten >=5pt] (2,2) -- node[above]{$3$} (3,1);
\draw [quivarrow,shorten <=5pt, shorten >=5pt,dashed] (3,1) -- node[above]{$3$} (2,0);
\draw [quivarrow,shorten <=5pt, shorten >=5pt] (4,2) -- node[above]{$4$} (3,1);
\draw [quivarrow,shorten <=5pt, shorten >=5pt,dashed] (3,1) -- node[above]{$4$} (4,0);
\draw [quivarrow,shorten <=5pt, shorten >=5pt] (4,2) -- node[above]{$5$} (5,1);
\draw [quivarrow,shorten <=5pt, shorten >=5pt,dashed] (5,1) -- node[above]{$5$} (4,0);
\draw [quivarrow,shorten <=5pt, shorten >=5pt] (6,2) -- node[above]{$6$} (5,1);
\draw [quivarrow,shorten <=5pt, shorten >=5pt,dashed] (5,1) -- node[above]{$6$} (6,0);
\draw [quivarrow,shorten <=5pt, shorten >=5pt] (6,2) -- node[above]{$7$} (7,1);
\draw [quivarrow,shorten <=5pt, shorten >=5pt, dashed] (7,1) -- node[above]{$7$} (6,0);
\draw [quivarrow,shorten <=5pt, shorten >=5pt,dashed] (7,1) -- node[above]{$8$} (8,0);
\draw [quivarrow,shorten <=5pt, shorten >=5pt] (8,2) -- node[above]{$8$} (7,1);
\draw [quivarrow,shorten <=5pt, shorten >=5pt, dotted] (6,0) -- node[above]{$7$} (7,-1);
\draw [quivarrow,shorten <=5pt, shorten >=5pt, dotted] (8,0) -- node[above]{$8$} (7,-1);

\draw [quivarrow,shorten <=5pt, shorten >=5pt,dotted] (0,0)-- node[above]{$1$} (1,-1);
\draw [quivarrow,shorten <=5pt, shorten >=5pt,dotted] (2,0) -- node[above]{$2$} (1,-1);
\draw [quivarrow,shorten <=5pt, shorten >=5pt,dotted] (4,0) -- node[above]{$4$} (3,-1);
\draw [quivarrow,shorten <=5pt, shorten >=5pt,dotted] (2,0) -- node[above]{$3$} (3,-1);
\draw [quivarrow,shorten <=5pt, shorten >=5pt,dotted] (4,0) -- node[above]{$5$} (5,-1);
\draw [quivarrow,shorten <=5pt, shorten >=5pt,dotted] (6,0) -- node[above]{$6$} (5,-1);

\draw [dotted] (0,-3) -- (0,2);
\draw [dotted] (8,-3) -- (8,2);

\draw [dashed] (4,-3) -- (4,-2);

\end{tikzpicture}
\]

\end{ex}

\begin{ex}

{\rm  Let $n=12$, $k=8$, and $I=\{1,2,4,5,7,8,10,11\}.$ Since $I=\{1,2\}\cup\{4,5\}\cup\{7,8\}\cup\{10,11\}$, we have that $r+1=4$, and that the $(r+1)$-tuples of lengths of  downward and upward slopes are $(2,2,2,2)$ and $(1,1,1,1)$. Since   $d_i=d_j$ for all $i,j$, we have that $A_1^m$ could be any of the $A_i$.

\[
\begin{tikzpicture}[scale=0.8,baseline=(bb.base),
quivarrow/.style={black, -latex, thick}]
\newcommand{\seventh}{51.4} 
\newcommand{\circradius}{1.5cm}
\newcommand{\inradius}{1.2cm}
\newcommand{\outradius}{1.8cm}
\newcommand{\dotrad}{0.1cm} 
\newcommand{\bdrydotrad}{{0.8*\dotrad}} 
\path (0,0) node (bb) {}; 


\draw (0,5) circle(\bdrydotrad) [fill=black];
\draw (1,4) circle(\bdrydotrad) [fill=black];
\draw (2,3) circle(\bdrydotrad) [fill=black];
\draw (3,4) circle(\bdrydotrad) [fill=black];
\draw (4,3) circle(\bdrydotrad) [fill=black];
\draw (5,2) circle(\bdrydotrad) [fill=black];
\draw (6,3) circle(\bdrydotrad) [fill=black];
\draw (7,2) circle(\bdrydotrad) [fill=black];
\draw (8,1) circle(\bdrydotrad) [fill=black];
\draw (9,2) circle(\bdrydotrad) [fill=black];
\draw (10,1) circle(\bdrydotrad) [fill=black];
\draw (11,0) circle(\bdrydotrad) [fill=black];
\draw (12,1) circle(\bdrydotrad) [fill=black];




\draw [quivarrow,shorten <=5pt, shorten >=5pt] (0,5)
-- node[above]{$1$} (1,4);
\draw [quivarrow,shorten <=5pt, shorten >=5pt] (1,4) -- node[above]{$2$} (2,3);
\draw [quivarrow,shorten <=5pt, shorten >=5pt] (3,4) -- node[above]{$3$} (2,3);
\draw [quivarrow,shorten <=5pt, shorten >=5pt] (3,4) -- node[above]{$4$} (4,3);
\draw [quivarrow,shorten <=5pt, shorten >=5pt] (4,3) -- node[above]{$5$} (5,2);
\draw [quivarrow,shorten <=5pt, shorten >=5pt] (6,3) -- node[above]{$6$} (5,2);
\draw [quivarrow,shorten <=5pt, shorten >=5pt] (6,3) -- node[above]{$7$} (7,2);
\draw [quivarrow,shorten <=5pt, shorten >=5pt] (7,2) -- node[above]{$8$} (8,1);
\draw [quivarrow,shorten <=5pt, shorten >=5pt] (9,2) -- node[above]{$9$} (8,1);
\draw [quivarrow,shorten <=5pt, shorten >=5pt] (9,2) -- node[above]{$10$} (10,1);
\draw [quivarrow,shorten <=5pt, shorten >=5pt] (10,1) -- node[above]{$11$} (11,0);
\draw [quivarrow,shorten <=5pt, shorten >=5pt] (12,1) -- node[above]{$12$} (11,0);





\draw [dotted] (0,-3) -- (0,5);
\draw [dotted] (12,-3) -- (12,1);

\draw [dashed] (6,-3) -- (6,-2);

\end{tikzpicture}
\]

Since the cyclic tuple of upward lengths starting at any $A_i$ is  $(1,1,1,1)$, the gaps between segments of $I$ are  in the right order for any $\Omega^m(L)$. The only condition from equation $(\ref{komplikovana})$ from the previous theorem to be fulfilled is that $A_1^m=A_c$ for some  $c$, i.e.\ that  
$$8t\equiv \sum_{i=1}^{c-1} (d_i+l_i)\!\!\!\mod 12. $$
From $d_i+l_i=3$  we have $\sum_{i=1}^{c-1} (d_i+l_i)=3(c-1).$ Thus, if $A_1^{2t}=A_c$, it must be 
$$8t \equiv 3(c-1)\!\!\! \mod 	12.$$ 
If $t=1$, then $8\not\equiv 3(c-1)\mod 12 $, for all $c$. If $t=2$, then $16\not\equiv 3(c-1)\!\!\!\mod 12 $, for all $c$. If $t=3$, then $24\equiv 3(c-1)\!\!\!\mod 12 $, for  $c=1$.  Thus, we conclude that the period of the module $L_I$ is $6$.

}
\end{ex}

\section{Extensions between rank 1 modules}

In this section we compute all (higher) extensions ${\rm Ext}^i(L_{I},L_{J})$, as a module over the centre $\mathbb F[t]$, for arbitrary rank 1 Cohen-Macaulay $B$-modules $L_I$ and $L_J$. We give a combinatorial description and an algorithm for computation of extension spaces between rank 1 Cohen-Macaulay modules by using only combinatorics of the rims $I$ and $J$.

We again use  a projective resolution of $L_{I}$
\[
\bigoplus_{v\in V} P_v \xrightarrow{D} \bigoplus_{u\in U} P_u \to  L_{I} \to 0,
\]
where $U=\{ u\not\in I : u+1 \in I\}$ and $V=\{ v \in I : v+1 \not\in I\}$.
Recall that the matrix $D=(d_{vu})$ has only non-zero entries
when $u,v$ are adjacent in $U\cup V$ (when ordered cyclically) and that 
\begin{equation}\label{eq:Dcoeffs}
 d_{vu} = \begin{cases}
 x^{v-u} & \text{when $u$ precedes $v$,}\\
 -y^{u-v} & \text{when $u$ follows $v$,}\\
  0 & \text{otherwise.}
 \end{cases}
\end{equation}

As in \cite{JKS},  applying ${\rm Hom}(-, L_{J})$ yields
\[
\begin{tikzpicture}[scale=0.8,
 arr/.style={black, -angle 60}]
\path (0,3) node (b0) {$\displaystyle\bigoplus_{u\in U}^{\phantom{U}} {\rm Hom}({P_u},{L_{J}})$};
\path (5,0.2) node(a1) {$\displaystyle\bigoplus_{v\in V} {\rm Hom}({P_v},{L_{J}})$};
\path (5,3) node (b1) {${\rm Hom}({\Omega(L_{I}}),{L_{J}})$};
\path (5,4.6) node (c1) {$0$};
\path (10,3) node(b2) {${\rm Ext}^1({L_{I}},{L_{J}})$};
\path (12.7,3) node (by) {$0$};
\path[arr] (b0) edge (b1);
\path[arr] (b1) edge (b2);
\path[arr] (b2) edge (by);
\path[arr] (c1) edge (b1);
\path[arr] (b1) edge (a1);
\path[arr] (b0) edge node[auto] {\small{$D^*$}} (a1);
\end{tikzpicture}
\]
where matrix $D^*=(d^*_{vu})$ is given by
\begin{equation}\label{eq:D*coeffs}
 d^*_{vu} = \begin{cases}
 t^a & \text{$a=\# [u,v)\setminus J$, when $u$ precedes $v$,}\\
 -t^b & \text{$b=\# J\cap [v,u)$, when $u$ follows $v$,}\\
  0 & \text{otherwise.}
 \end{cases}
\end{equation}

Note that the exponents of the monomials in the matrix $D^*$ measure the offsets between the valleys of the rim $I$ from the rim $J$, that is the offset from the canonical position on the rim $J$ of a given valley of the rim $I$  when the corresponding projective indecomposable module is mapped canonically into $L_J$. This is the same as the sum of the sizes of the upward slopes (resp.\ downward slopes) of the rim $J$ between $u$ and $v$ (resp.\ $v$ and $u$) if $u$ precedes $v$ (resp.\ if $v$ precedes $u$). If this number is 0, then the rims $I$ and $J$ have the same tendencies between $u$ and $v$ (both rims are either upward or downward sloping), and in this case the corresponding entry of the matrix $D^*$ is $1$ or $-1$.         

Also, since  ${\rm Hom}({\Omega(L_{I}}),{L_{J}})$ is a free module of rank $r$ over the centre, and ${\rm im}\, D^*$ is also a rank $r$ submodule of a free $\mathbb F[t]$-module $\displaystyle\bigoplus_{v\in V} {\rm Hom}({P_v},{L_{J}})$, we are left to compute invariant factors of $D^*$ to determine generators of a free submodule ${\rm im}\, D^*$ of $\displaystyle\bigoplus_{v\in V} {\rm Hom}({P_v},{L_{J}})$. 

Before proceeding, we note that the leading coefficient of the monomial  $d_{uv}^*$ is $1$  if $u$ is to the left of $v$ in the cyclic ordering drawn in the plane, otherwise it is $-1$.

\subsection{LR trapezia from the rims}
In this subsection we introduce a new combinatorial structure, consisting of a sequence of trapezia, that will enable us to describe extension spaces between rank 1 Cohen-Macaulay modules purely in terms of their rims. 

Let us draw the rims of $L_I$ and $L_J$ one below the other, with the rim of $L_I$ above. It does not matter how far apart vertically we draw the rims, but we demand that the rim of $L_J$ is strictly below the lowest point of the rim of $L_I$.

\[
\begin{tikzpicture}[scale=0.8,baseline=(bb.base),
quivarrow/.style={black, -latex, thick}]
\newcommand{\seventh}{51.4} 
\newcommand{\circradius}{1.5cm}
\newcommand{\inradius}{1.2cm}
\newcommand{\outradius}{1.8cm}
\newcommand{\dotrad}{0.1cm} 
\newcommand{\bdrydotrad}{{0.8*\dotrad}} 
\path (0,0) node (bb) {}; 


\draw (0,0) circle(\bdrydotrad) [fill=black];
\draw (1,1) circle(\bdrydotrad) [fill=black];
\draw (2,0) circle(\bdrydotrad) [fill=black];
\draw (3,1) circle(\bdrydotrad) [fill=black];
\draw (4,0) circle(\bdrydotrad) [fill=black];
\draw (5,1) circle(\bdrydotrad) [fill=black];
\draw (6,2) circle(\bdrydotrad) [fill=black];
\draw (7,3) circle(\bdrydotrad) [fill=black];
\draw (8,4) circle(\bdrydotrad) [fill=black];
\draw (9,3) circle(\bdrydotrad) [fill=black];
\draw (10,4) circle(\bdrydotrad) [fill=black];
\draw (11,3) circle(\bdrydotrad) [fill=black];
\draw (12,2) circle(\bdrydotrad) [fill=black];
\draw (13,3) circle(\bdrydotrad) [fill=black];
\draw (14,2) circle(\bdrydotrad) [fill=black];
\draw (15,1) circle(\bdrydotrad) [fill=black];


\draw (0,-2) circle(\bdrydotrad) [fill=black];
\draw (1,-3) circle(\bdrydotrad) [fill=black];
\draw (2,-4) circle(\bdrydotrad) [fill=black];
\draw (3,-3) circle(\bdrydotrad) [fill=black];
\draw (4,-4) circle(\bdrydotrad) [fill=black];
\draw (5,-3) circle(\bdrydotrad) [fill=black];
\draw (7,-5) circle(\bdrydotrad) [fill=black];
\draw (9,-3) circle(\bdrydotrad) [fill=black];
\draw (10,-4) circle(\bdrydotrad) [fill=black];
\draw (12,-2) circle(\bdrydotrad) [fill=black];
\draw (13,-3) circle(\bdrydotrad) [fill=black];
\draw (14,-2) circle(\bdrydotrad) [fill=black];
\draw (15,-1) circle(\bdrydotrad) [fill=black];
\draw (6,-4) circle(\bdrydotrad) [fill=black];
\draw (8,-4) circle(\bdrydotrad) [fill=black];
\draw (11,-3) circle(\bdrydotrad) [fill=black];


\draw [quivarrow,shorten <=5pt, shorten >=5pt] (1,1)
-- node[above]{$1$} (0,0);
\draw [quivarrow,shorten <=5pt, shorten >=5pt] (1,1) -- node[above]{$2$} (2,0);
\draw [quivarrow,shorten <=5pt, shorten >=5pt] (3,1) -- node[above]{$3$} (2,0);
\draw [quivarrow,shorten <=5pt, shorten >=5pt] (3,1) -- node[above]{$4$} (4,0);
\draw [quivarrow,shorten <=5pt, shorten >=5pt] (5,1) -- node[above]{$5$} (4,0);
\draw [quivarrow,shorten <=5pt, shorten >=5pt] (6,2) -- node[above]{$6$} (5,1);
\draw [quivarrow,shorten <=5pt, shorten >=5pt] (7,3) -- node[above]{$7$} (6,2);
\draw [quivarrow,shorten <=5pt, shorten >=5pt] (8,4) -- node[above]{$8$} (7,3);
\draw [quivarrow,shorten <=5pt, shorten >=5pt] (8,4) -- node[above]{$9$} (9,3);
\draw [quivarrow,shorten <=5pt, shorten >=5pt] (10,4) -- node[above]{$10$} (9,3);
\draw [quivarrow,shorten <=5pt, shorten >=5pt] (10,4) -- node[above]{$11$} (11,3);
\draw [quivarrow,shorten <=5pt, shorten >=5pt] (11,3) -- node[above]{$12$} (12,2);
\draw [quivarrow,shorten <=5pt, shorten >=5pt] (13,3) -- node[above]{$13\,\,$} (12,2);
\draw [quivarrow,shorten <=5pt, shorten >=5pt] (13,3) -- node[above]{$14\,\,$} (14,2);
\draw [quivarrow,shorten <=5pt, shorten >=5pt] (14,2) --node[above]{$15\,\,$}(15,1);




\draw[dashed] (0,-2)--(1,-3)--(2,-4)--(3,-3)--(4,-4)--(5,-3)--(7,-5)--(9,-3)--(10,-4)--(12,-2)--(13,-3)--(14,-2)--(15,-1);

\draw [dotted] (0,-5) -- (0,2);
\draw [dotted] (15,-5) -- (15,3);


\end{tikzpicture}
\]

We assume without loss of generality that $0\in I$, but $1\in J\setminus I$.   If we remove all the segments from both rims that are parallel, and draw vertical lines connecting the corresponding end points of the remaining segments of $I$ and $J$ we see that we are left with a collection of  trapezia.

\[
\begin{tikzpicture}[scale=0.8,baseline=(bb.base),
quivarrow/.style={black, -latex, thick}]
\newcommand{\seventh}{51.4} 
\newcommand{\circradius}{1.5cm}
\newcommand{\inradius}{1.2cm}
\newcommand{\outradius}{1.8cm}
\newcommand{\dotrad}{0.1cm} 
\newcommand{\bdrydotrad}{{0.8*\dotrad}} 
\path (0,0) node (bb) {}; 


\draw (0,0) circle(\bdrydotrad) [fill=black];
\draw (1,1) circle(\bdrydotrad) [fill=black];
\draw (2,0) circle(\bdrydotrad) [fill=black];
\draw (3,1) circle(\bdrydotrad) [fill=black];
\draw (4,0) circle(\bdrydotrad) [fill=black];
\draw (5,1) circle(\bdrydotrad) [fill=black];
\draw (6,2) circle(\bdrydotrad) [fill=black];
\draw (7,3) circle(\bdrydotrad) [fill=black];
\draw (8,4) circle(\bdrydotrad) [fill=black];
\draw (9,3) circle(\bdrydotrad) [fill=black];
\draw (10,4) circle(\bdrydotrad) [fill=black];
\draw (11,3) circle(\bdrydotrad) [fill=black];
\draw (12,2) circle(\bdrydotrad) [fill=black];
\draw (13,3) circle(\bdrydotrad) [fill=black];
\draw (14,2) circle(\bdrydotrad) [fill=black];
\draw (15,1) circle(\bdrydotrad) [fill=black];


\draw (0,-2) circle(\bdrydotrad) [fill=black];
\draw (1,-3) circle(\bdrydotrad) [fill=black];
\draw (2,-4) circle(\bdrydotrad) [fill=black];
\draw (3,-3) circle(\bdrydotrad) [fill=black];
\draw (4,-4) circle(\bdrydotrad) [fill=black];
\draw (5,-3) circle(\bdrydotrad) [fill=black];
\draw (7,-5) circle(\bdrydotrad) [fill=black];
\draw (9,-3) circle(\bdrydotrad) [fill=black];
\draw (10,-4) circle(\bdrydotrad) [fill=black];
\draw (12,-2) circle(\bdrydotrad) [fill=black];
\draw (13,-3) circle(\bdrydotrad) [fill=black];
\draw (14,-2) circle(\bdrydotrad) [fill=black];
\draw (15,-1) circle(\bdrydotrad) [fill=black];
\draw (6,-4) circle(\bdrydotrad) [fill=black];

\draw (8,-4) circle(\bdrydotrad) [fill=black];
\draw (11,-3) circle(\bdrydotrad) [fill=black];


\draw [quivarrow,shorten <=5pt, shorten >=5pt] (1,1)
-- node[above]{$1$} (0,0);
\draw [quivarrow,shorten <=5pt, shorten >=5pt] (6,2) -- node[above]{$6$} (5,1);
\draw [quivarrow,shorten <=5pt, shorten >=5pt] (7,3) -- node[above]{$7$} (6,2);
\draw [quivarrow,shorten <=5pt, shorten >=5pt] (8,4) -- node[above]{$9$} (9,3);
\draw [quivarrow,shorten <=5pt, shorten >=5pt] (10,4) -- node[above]{$10$} (9,3);
\draw [quivarrow,shorten <=5pt, shorten >=5pt] (10,4) -- node[above]{$11$} (11,3);
\draw [quivarrow,shorten <=5pt, shorten >=5pt] (11,3) -- node[above]{$12$} (12,2);
\draw [quivarrow,shorten <=5pt, shorten >=5pt] (13,3) -- node[above]{$13\,\,$} (12,2);
\draw [quivarrow,shorten <=5pt, shorten >=5pt] (13,3) -- node[above]{$14\,\,$} (14,2);
\draw [quivarrow,shorten <=5pt, shorten >=5pt] (14,2) --node[above]{$15\,\,$}(15,1);




\draw[dashed] (0,-2)--(1,-3);  \draw[dashed] (5,-3)--(7,-5);  \draw[dashed] (8,-4)--(9,-3)--(10,-4)--(12,-2)--(13,-3)--(14,-2)--(15,-1);

\draw [dotted] (1,-3) -- (1,1);
\draw [dotted] (5,-3) -- (5,1);
\draw [dotted] (7,-5) -- (7,3);
\draw [dotted] (8,-4) -- (8,4);
\draw [dotted] (9,-3) -- (9,3);
\draw [dotted] (10,-4) -- (10,4);

\draw [dotted] (12,-2) -- (12,2);
\draw [dotted] (13,-3) -- (13,3);

\draw [dotted] (0,-5) -- (0,2);
\draw [dotted] (15,-5) -- (15,3);


\end{tikzpicture}
\]

 If a trapezium has a shorter base edge on its left (right) side, then we call this trapezium a left (right) trapezium.  We proceed by writing down a word containing letters $L$ and $R$ as follows: looking at the diagram of trapezia, and reading from left to right we write a letter $L$ whenever we have a left trapezium and $R$ whenever we have a right trapezium. In the above example we get the word  $w_{I,J}$: $LLRLRLR$.

Since $I\neq J$,  we can always assume, after cyclically permuting elements of $\{1,2,\dots,n\}$ if necessary, that the first letter is $L$ and that the last letter is $R$. The following step is to reduce the word $w_{I,J}$   by replacing multiple consecutive occurrences  of  $L$ (resp.\ $R$) by a single $L$ (resp.\ $R$). What we are left with is a word of the form $LRLRLRLR...LR=(LR)^s$. Let us call $s$ the rank of the reduced word $w_{I,J}$.    

If in the above diagram we treat consecutive trapezia of the same orientation as a single trapezium, then we can see the above diagram as a collection of "boxes", with box being a single pair  consisting of one left trapezium and one right trapezium. The number $s$ denotes the number of boxes for the rims $I$ and $J$.  If we ignore the parts of the rims of $I$ and $J$ that have the same tendency, then what we are left with is the two rims with always different tendencies (in other words, we have a sequence of boxes), and these rims are symmetric in the sense that one is a reflection of the other with respect to the horizontal line between them.    

\subsection{The main theorem}

Our aim is to describe extension spaces between rank 1 Cohen-Macaulay modules by using combinatorics of the corresponding rims. As a module over the centre $\mathbb F[t]$, it turns out that the extension space between rank 1 Cohen-Macaulay modules $L_I$ and $L_J$ is a torsion module isomorphic to a direct sum of the cyclic modules which are computed directly from the rims $I$ and $J$. Our main result states. 

\begin{te} \label{glavna} Let $L_I$ and $L_J$ be rank $1$ Cohen-Macaulay modules. Then, as modules over the centre $\mathbb F[t]$, 
$${\rm Ext}^1(L_I,L_J)\cong \mathbb F[t]/(t^{h_1})\times \mathbb F[t]/(t^{h_2})\times \cdots \times \mathbb F[t]/(t^{h_{s-1}}),$$ where $s$ is equal to the number of $LR$ trapezia for the rims $I$ and $J$ ($s$ is the rank of the word $w_{I,J}$), and $t^{h_1},\dots, t^{h_{s-1}} $ are the invariant factors of the matrix $D^*$ given by {\rm (\ref{eq:D*coeffs})}, with  $h_i\geq 0$ and $h_i\leq h_{i+1}$.  
\end{te}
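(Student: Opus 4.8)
The plan is to compute the cokernel of $D^*$ as an $\mathbb F[t]$-module, which is exactly $\mathrm{Ext}^1(L_I,L_J)$ by the exact sequence displayed above. Since $D^*$ is a square $(r+1)\times(r+1)$ matrix supported on two cyclic diagonals with monomial entries $\pm t^{a}$, the first step is to reduce the problem to the ``box'' picture: whenever the rims $I$ and $J$ have the same tendency on some interval, the corresponding entry of $D^*$ is a unit ($\pm 1$), and I would use elementary row/column operations over $\mathbb F[t]$ to clear the row and column through that unit. Concretely, each such unit lets us perform a block reduction eliminating one pair (one valley/peak), and the claim is that after all these reductions we are left with a $(s)\times(s)$ matrix $\bar D^*$ — or rather, after the standard rank bookkeeping, an $(s-1)\times(s-1)$ situation — whose structure is governed purely by the reduced word $(LR)^s$. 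This is where I would invoke the discussion preceding the statement: $\mathrm{Hom}(\Omega L_I, L_J)$ is free of rank $r$ and $\mathrm{im}\,D^*$ is a rank-$r$ submodule, so $\mathrm{coker}\,D^*$ is torsion and determined by the $r$ nontrivial invariant factors, of which (after the unit reductions) exactly $s-1$ can be nonunits.

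The second and main step is to identify these surviving invariant factors with $t^{h_1},\dots,t^{h_{s-1}}$. Because every nonzero entry of $\bar D^*$ is a power of $t$ (up to sign), the matrix is a ``monomial matrix'' and its Smith normal form over $\mathbb F[t]=\mathbb F[t]_{(t)}$-adically is computed by the classical minor-gcd formula: the $j$-th determinantal divisor is $t^{\,e_j}$ where $e_j$ is the minimal total $t$-exponent over all $j\times j$ minors, i.e.\ a min-cost matching/transversal quantity. For a matrix supported on two cyclic diagonals this optimization is completely explicit — the $j\times j$ minors correspond to choosing $j$ of the boxes, and the exponent is a sum of the trapezium offsets read off from the rims $I$ and $J$. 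I would then check that $h_i = e_{i+1}-e_i$ gives exactly the quantities described combinatorially in the theorem (monotone, $h_i\le h_{i+1}$, which follows from convexity of the determinantal exponents $e_j$, a standard fact). The sign bookkeeping from the remark about leading coefficients ($+1$ to the left, $-1$ to the right in the cyclic order) is needed only to ensure no unwanted cancellation: one must verify that the minimal-exponent minor is actually nonzero, not killed by a coincidence of signs, and here the left/right structure of the word $(LR)^s$ guarantees the extremal term survives.

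The hard part will be the bookkeeping in the reduction of $D^*$ to $\bar D^*$: one must be careful that clearing a unit entry on a cyclic (wrap-around) diagonal does not destroy the two-diagonal structure in a way that obstructs the next reduction, and that the exponents accumulated during elimination are exactly the trapezium offsets and not something shifted. I expect this to require a careful induction on the number of ``parallel'' segments removed, with the inductive hypothesis stating that at each stage the reduced matrix is still supported on two cyclic diagonals with monomial entries whose exponents are the partial offsets between the current (sub)rims. Once that is in place, the passage to Smith normal form via determinantal divisors and the translation of the min-exponent transversal into the combinatorial data of the boxes is essentially a finite, explicit computation; I would present it as a lemma computing the $j\times j$ minimal minor exponent and then read off the invariant factors. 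The identification of $s$ with the rank of the reduced word $w_{I,J}$ is immediate from the construction of the LR-trapezia, so the only genuinely new content is the exponent computation.
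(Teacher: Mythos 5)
Your plan agrees with the paper's proof in its first half: clearing the $\pm 1$ entries of $D^*$ by elementary $\mathbb F[t]$-operations to reduce to an $s\times s$ two-cyclic-diagonal block $E$ whose entries are monomials with strictly positive exponents, and identifying $s$ with the rank of $w_{I,J}$, is exactly what the paper does (and it does verify that each elimination keeps the two-diagonal shape and accumulates the correct offsets). Where you diverge is the second half: the paper continues with explicit elementary operations — repeatedly peel off $\min\{a_j,b_j\}$ as a $1\times 1$ diagonal block while merging adjacent offsets — until the Smith form appears, whereas you propose to read the Smith form off the determinantal divisors $d_j=\gcd$ of $j\times j$ minors. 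Both routes are legitimate; the paper's iterative version has the advantage that it directly produces the algorithm given after the theorem, while yours is more structural.

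A few points in your plan need sharpening before it would close up. First, the worry about sign cancellation is smaller than you think: for the two-cyclic-diagonal matrix $E$, the bipartite row--column incidence graph is a $2s$-cycle, so every proper square submatrix has an incidence graph that is a disjoint union of paths and hence at most one contributing permutation — every proper minor is a single signed monomial, so there is no cancellation to worry about. The only minor with two contributing permutations is $\det E$ itself, and it is $0$ by the corank-$1$ argument (the paper makes this explicit by exhibiting, via Remark~3.10, a column that is an $\mathbb F[t]$-linear combination of the others with a unit coefficient, which lets one replace it by a zero column before reducing). Second, the minors do not ``correspond to choosing $j$ boxes'': a nonzero $j\times j$ minor corresponds to a size-$j$ matching in the $2s$-cycle whose edges carry weights $a_1,b_1,\dots,a_s,b_s$ alternately, so $e_j$ is a min-weight matching problem; working this out and matching it to the output of the paper's algorithm is the content you'd still have to supply. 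Third, your formula $h_i=e_{i+1}-e_i$ is off by one; with $d_j=t^{e_j}$ and $d_0=1$, the $j$th invariant factor is $t^{e_j-e_{j-1}}$, and since the first $r+1-s$ of these are $1$, you get $h_i=e_{r+1-s+i}-e_{r-s+i}$. None of these are fatal, but they are exactly where the details have to be filled in.
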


In the coming proof of this result we give an algorithm for the computation of the numbers $h_i$ using only rims $I$ and $J$.

If we look at the above matrix $D^*$ we see that it is of the following form (rows are indexed by the valleys of $I$, with the first valley being $v_1$, which we can assume to be 0; columns are indexed by the peaks of $I$, with $u_1$ being the first peak; note that $v_1$ precedes $u_1$):

\[
\left(
\begin{array}{cccccc}
  -t^{a_1}&   &   &&&t^{b_p}\\
  t^{b_1}&  -t^{a_2} &   &&&\\
  &  t^{b_2} & -t^{a_3}  &&&\\
  &   &   & \ddots &&\\
  &   &   &&-t^{a_{p-1}} &\\
  &   &   &&t^{b_{p-1}} &-t^{a_p} \\  
\end{array}
\right)
\]

Here, the only non-zero entries are on the main diagonal and on the lower (cyclic) subdiagonal (which contains the top right entry $d^*_{1,p}$).  There are only two non-zero entries in each column and row. Both of these entries are monomials, i.e.\ $a_i, b_i\geq 0$, and their exponents are given by the sums of the sizes of the lateral sides of the corresponding trapezia that appear in a given interval $[u,v]$ or $[v,u]$. Note that $a_i=0$ (resp.\ $b_i=0$) if and only if there are no left (resp.\ right) trapezia in the above diagram between the points $v_i$ and $u_i$ (resp.\  $u_i$ and $v_{i+1}$). Thus, the $i$th column has non-zero entries equal to $-1$ and $1$ if and only if there are no left trapezia to the left of $u_i$, and no right trapezia to the right of $u_i$. In other words, this happens if and only if $I$ and $J$ have the same tendency between $v_i$ and $u_i$ , and between $u_i$ and $v_{i+1}$.    

We note here that there can only be left trapezia present between $v_i$ and $u_i$ since the rim of $I$ has only upward tendency. Analogously, there can be only right trapezia between $u_i$ and $v_{i+1}$ since the tendency of $I$ is downwards. If there are multiple left trapezia between $v_i$ and $u_i$, then we regard them as a single trapezium with an offset given by the sum of offsets of those left trapezia. The same goes for multiple right trapezia. So we regard every peak as having at most one left, and at most one right trapezium next to it. This corresponds to the reduction step of the word $w_{I,J}$ from the previous subsection. Here, reduced letters come from the same peak. 

In what follows, we will  compute the invariant factors of the matrix $D^*$, i.e.\ we will find a diagonal matrix that is equivalent over $\mathbb F[t]$ to the matrix $D^*$. Let us now assume that a part of the matrix $D^*$ is given by

\[
\left(
\begin{array}{ccccccc}
&\vdots  &   &   &&&\\
 & t^{b_{i-2}}&  -t^{a_{i-1}} &   &&&\\
 & &  t^{b_{i-1}} & -1  &&\\
 & &   &    t^{b_{i}} &-t^{a_{i+1}}&\\
 & &   &   &\vdots &\\
 & &   &   && & \\  
\end{array}
\right)
\]

In other words, for a peak $u_i$ there are no trapezia to its left. Now, we perform elementary transformations of the matrix $D^*$ to obtain an equivalent matrix. If we multiply the $i$th column by $t^{b_{i-1}}$ and add it to the column $i-1$, and then multiply the $i$th row with $t^{b_{i}}$ and add it to the $(i+1)$th row we get that the matrix $D^*$ is equivalent to the matrix

\[
\left(
\begin{array}{ccccccc}
&\vdots  &   &   &&&\\
 & t^{b_{i-2}}&  -t^{a_{i-1}} &   &&&\\
 & &  0& -1  &&\\
 & &  t^{b_i+b_{i-1}}  & 0   &-t^{a_{i+1}}&\\
 & &   &   &\vdots &\\
 & &   &   && & \\  
\end{array}
\right)
\]

The $i$th row and column have only one non-zero entry, so after appropriate swaps of rows and columns, and multiplication by -1, we have that the matrix $D^*$ is equivalent to the matrix 

\[
\left(
\begin{array}{ccccccc}
1& & &  & &\\
&\vdots  &   &   &&&\\
 & & t^{b_{i-2}}&  -t^{a_{i-1}}    &&&\\
& & &  t^{b_i+b_{i-1}}  &  -t^{a_{i+1}}&\\
 & &   &   &\vdots &\\
 & &   &   && & \\  
\end{array}
\right)
\]

We note that this operation of finding an equivalent matrix corresponds to reducing two right triangles, which come from two consecutive peaks, to a single $R$ in the word $w_{I,J}$, and that the sum of exponents $b_i+b_{i-1}$ corresponds to the sum of lateral sides (offsets) of two consecutive right trapezia. This is because $a_i=0$ means that there is no left trapezium in the word $w_{I,J}$ coming from the peak $u_i$, so we are left with potentially two consecutive right trapezia, one coming from $u_{i-1}$ and one from $u_i$.  

Analogously, if for a peak $u_i$ there are no trapezia to its right, then a part of $D^*$ is  

\[
\left(
\begin{array}{ccccccc}
&\vdots  &   &   &&&\\
 & t^{b_{i-1}}&  -t^{a_{i}} &   &&&\\
 & &  1 &  -t^{a_{i+1}} &&\\
 & &   &    t^{b_{i+1}} & &\\
 & &   &   &\vdots &\\
 & &   &   && & \\  
\end{array}
\right)
\]

After elementary transformations over $\mathbb F[t]$ we get that $D^*$ is equivalent to the matrix

\[
\left(
\begin{array}{ccccccc}
1& & &  & &\\
&\vdots  &   &   &&&\\
 & & t^{b_{i-1}}&  -t^{a_i+a_{i+1}}    &&&\\
& & &  t^{b_{i+1}}  &  -t^{a_{i+2}}&\\
 & &   &   &\vdots &\\
 & &   &   && & \\  
\end{array}
\right)
\]

We note that this operation of finding an equivalent matrix corresponds to reducing two left triangles to a single $L$ in the word $w_{I,J}$, and that the sum of exponents $a_i+a_{i+1}$ corresponds to the sum of lateral sides (offsets) of two consecutive left trapezia. This is because $b_i=0$ means that there is no right trapezium in the word $w_{I,J}$ coming from the peak $u_i$, so we are left with potentially two consecutive left trapezia, one coming from $u_{i-1}$ and one from $u_i$.

\begin{re} \label{delete}
{\rm If we combine the previous two cases, i.e. if we have that in one column of $D$ we have that the non-zero entries are -1 and 1, we get that the matrix $D^*$

\[
\left(
\begin{array}{ccccccc}
&\vdots  &   &   &&&\\
 & t^{b_{i-1}}&  -1 &   &&&\\
 & &  1 &  -t^{a_{i+1}} &&\\
 & &   &    t^{b_{i+1}} & &\\
 & &   &   &\vdots &\\
 & &   &   && & \\  
\end{array}
\right)
\]

is equivalent to the matrix 

\[
\left(
\begin{array}{ccccccc}
1& & &  & &\\
&\vdots  &   &   &&&\\
 & & t^{b_{i-1}}&  -t^{a_{i+1}}    &&&\\
& & &  t^{b_{i+1}}  &  -t^{a_{i+2}}&\\
 & &   &   &\vdots &\\
 & &   &   && & \\  
\end{array}
\right)
\]
meaning that we could just remove the column $i$, and continue to work with the smaller matrix. }
\end{re}

If we continue to apply these elementary transformations to the matrix $D^*$, we eventually end up with a matrix of the form

\[
D_1=\left(
\begin{array}{ccccccccc}
1& & &  &&& &\\
&\ddots & &&   &   &&&\\
& &  1 & &  &&&\\
 & &   &-t^{a_{1}}&&    &&&t^{b_s}\\
 &&   &t^{b_{1}}  &  -t^{a_{2}}&&&\\
 & && &t^{b_2}      &\ddots & &\\
 & &&  & &&   t^{b_{s-2}}&-t^{a_{s-1}}&  \\  
& &&   &&&    &t^{b_{s-1}}& -t^{a_s} \\  
\end{array}
\right),
\]
where $s$  is equal to the number of the $LR$ boxes for the rims $I$ and $J$, and $a_i,b_i>0$. This follows from the fact that the elementary transformations that we did on $D^*$ correspond to the reduction steps on the word $w_{I,J}$ from the previous subsection. 

Let $E$ be the $s\times s$ submatrix of $D_1$ consisting of the last $s$ rows and columns of $D_1$. Since ${\rm im}\, D^*$ is a free submodule of corank 1 of the free module $\bigoplus_{v\in V} {\rm Hom}{(P_v},{L_{J})}$, it follows that $D^*$ is also a matrix of corank 1, and that $E$ is a matrix of corank 1. There is a linear combination over $\mathbb F[t]$ of columns of $D^*$ that is equal to zero. Moreover, at least one of the coefficients in this linear combination is equal to 1 (these are precisely the coefficients of the columns corresponding to the peaks that are placed on the rim of $J$ when $L_I$ is canonically mapped into $L_J$,  see Remark \ref{placement} below).

We can assume that this column is the last column, after possibly cyclically permuting the columns. Therefore, $D^*$ is equivalent to the matrix of the form 
\[
D_1=\left(
\begin{array}{ccccccccc}
1& & &  &&& &\\
&\ddots & &&   &   &&&\\
& &  1 & &  &&&\\
 & &   &-t^{a_{1}}&&    &&&0\\
 &&   &t^{b_{1}}  &  -t^{a_{2}}&&&&\\
 & && &t^{b_2}      &\ddots & &&\\
 & &&  & &&   t^{b_{s-2}}&-t^{a_{s-1}}& \\  
& &&   &&&    &t^{b_{s-1}}& 0\\  
\end{array}
\right),
\]

Let $h$ be $\min\{a_j,b_j\vert\,  j=1,2,\dots,s-1\}$. 
Let us assume that $h=a_i$. If $i>1$, then by multiplying the $i$th column of $E$ by  $t^{b_{i-1}-a_i}$ and adding to the $(i-1)$th column, and then multiplying the $i$th row by $t^{b_i-a_i}$ and adding this row to the $(i+1)$th row we get that $E$ is equivalent to the matrix (we also do the necessary swaps of rows and columns and multiplication by -1): 
 \[
\left(
\begin{array}{cccccccccccc}
& &  t^{a_i} & &  &&&&&&\\
 & &   &-t^{a_{1}}&&    &&&&&&0\\
 &&   &t^{b_{1}}  &  -t^{a_{2}}&&&&&&&\\
 & && &t^{b_2}      &\ddots & &&&&&\\
 & && &&&-t^{a_{i-1}}      &&&&&\\
 &&&&& &t^{b_{i-1}+b_i-a_i}      &-t^{a_{i+1}}&&&& \\
&&& & &&&  t^{b_{i+1}}&&&& \\
 & &&&&& &      &\ddots & &&\\
 &&& & &&  & &&   -t^{a_{s-2}}&&  \\  
&&& & &&  & &&   t^{b_{s-2}}&-t^{a_{s-1}}&  \\  
& &&&&&   &&&    &t^{b_{s-1}}& 0\\  
\end{array}
\right).
\]

If $i=1$, then simply by multiplying the first row by $t^{b_1-a_1}$ and adding it to the second row we obtain a matrix of the above form.

If $h=b_i$ for some $i$, then by using the analogous elementary transformations (e.g.\ if $i=s-1$, then we simply multiply the last row by $t^{a_{s-1}-{b_{s-1}}}$ and add to the row above), we obtain that $E$ is equivalent to the matrix 
 \[
\left(
\begin{array}{cccccccccccc}
& &  t^{b_i} & &  &&&&&&\\
 & &   &-t^{a_{1}}&&    &&&&&&0\\
 &&   &t^{b_{1}}  &  -t^{a_{2}}&&&&&&&\\
 & && &t^{b_2}      &\ddots & &&&&&\\
 & && &&&-t^{a_{i-1}}      &&&&&\\
 &&&&& &t^{b_{i-1}   }   &-t^{a_{i+1}+a_i-b_i}&&&& \\
&&& & &&&  t^{b_{i+1}}&&&& \\
 & &&&&& &      &\ddots & &&\\
 &&& & &&  & &&   -t^{a_{s-2}}&&  \\  
&&& & &&  & &&   t^{b_{s-2}}&-t^{a_{s-1}}&\\  
& &&&&&   &&&    &t^{b_{s-1}}& 0\\  
\end{array}
\right).
\]

By repeating these steps we finally get that $D^*$ is equivalent to the matrix 
\[
\left(
\begin{array}{cccccccc}
  1&   &  &&& &&\\
  &\ddots   &   &&&&&\\
  &   &  1 &&&&&\\
  &   &   & t^{h_1}&&&&\\
  &   &  & &&\ddots&&\\
     &   &&&&&t^{h_{s-1}}&\\
    &   &&&&&&0\\
\end{array}
\right)
\]

where $0<h_1\leq h_2\leq \dots \leq h_{s-1}$. 

Now, since ${\rm Ext}^1(L_{I},L_{J})$  is isomorphic to the quotient of a free module of rank $r$ by a free submodule of the same rank generated by invariant factors of the matrix $D^*$, that is by $1,\dots,1,t^{h_1},\dots, t^{h_{s-1}}$, we have that 
$${\rm Ext}^1(L_{I},L_{J})\cong \mathbb F[t]^{r}/ \mathbb F[t]\times\dots \times \mathbb F[t]\times t^{h_1} \mathbb F[t]\times\dots \times t^{h_{s-1}}\mathbb F[t]$$ 
$$\mathbb \cong F[t]/(t^{h_1})\times\dots \times \mathbb F[t]/(t^{h_{s-1}}),$$ where $h_1\leq h_2\leq \dots \leq h_{s-1}.$ 
Recall that $r+1$ is the  number of peaks of the rim $I$.

This proves Theorem \ref{glavna}! 

\begin{co} \label{cross} If $I\neq J$, then ${\rm Ext}^1(L_{I},L_{J})=0$ if and only if the number of $LR$ boxes is equal to $1$.
\end{co}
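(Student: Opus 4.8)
The plan is to read the corollary straight off Theorem \ref{glavna} together with the extra information produced inside its proof. Recall that, as $\mathbb F[t]$-modules,
\[
{\rm Ext}^1(L_I,L_J)\cong \mathbb F[t]/(t^{h_1})\times\cdots\times \mathbb F[t]/(t^{h_{s-1}}),
\]
where $s$ is the number of $LR$ boxes for the rims $I$ and $J$ and $0\le h_1\le\cdots\le h_{s-1}$. Since $I\neq J$ the two rims are not everywhere parallel, so at least one trapezium survives the construction and $s\ge 1$. Thus the equivalence to be proved is really about when the displayed product is trivial.

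For the ``if'' direction, suppose $s=1$. Then the index $i$ in the product ranges over the empty set $1\le i\le 0$, so ${\rm Ext}^1(L_I,L_J)=0$; equivalently, the reduced word is $w_{I,J}=LR$, the matrix $D^*$ is (as in the proof of Theorem \ref{glavna}) equivalent to the diagonal matrix with entries $1,\dots,1,0$, and the associated cokernel of free $\mathbb F[t]$-modules vanishes. For the ``only if'' direction I would argue contrapositively: assuming $s\ge 2$ I must exhibit a non-zero summand, and for this it is enough to show that $h_1\ge 1$. This is exactly what the reduction in the proof of Theorem \ref{glavna} gives: $D^*$ is brought to the form $D_1$ whose non-trivial block is the essentially bidiagonal matrix $E$ with diagonal entries $-t^{a_1},\dots,-t^{a_{s-1}}$ and subdiagonal entries $t^{b_1},\dots,t^{b_{s-1}}$, and in that fully reduced form every exponent $a_j$ and $b_j$ is strictly positive. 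Granting this, $h_1=\min_{1\le j\le s-1}\{a_j,b_j\}\ge 1$, so $\mathbb F[t]/(t^{h_1})$ is a non-zero cyclic summand and hence ${\rm Ext}^1(L_I,L_J)\neq 0$, which completes the proof.

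The single point that still needs a word of justification — and the only real, albeit minor, obstacle — is the positivity of the offsets $a_j,b_j$ in the fully reduced matrix. This is built into the set-up: all parallel segments of the two rims are discarded before any trapezia are drawn, so each surviving trapezium has its two parallel sides of genuinely different lengths and therefore a nonzero lateral offset; moreover the reduction step only ever replaces several consecutive same-orientation trapezia by a single one whose offset is the sum of theirs, so positivity of every offset is preserved throughout the reduction of $w_{I,J}$ to $(LR)^s$. Once this is noted, the corollary follows with no further computation.
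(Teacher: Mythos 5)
Your proof is correct and takes essentially the same approach as the paper, which presents the corollary as an immediate consequence of Theorem \ref{glavna}: if $s=1$ the product is empty, and if $s\ge 2$ the reduction of $D^*$ in the proof of the theorem leaves a block whose exponents are all strictly positive (the paper explicitly reaches ``$0<h_1\leq h_2\leq\dots\leq h_{s-1}$''), so the smallest invariant factor $t^{h_1}$ is a genuine power of $t$ and the corresponding cyclic summand is non-zero. Your justification of the positivity of the offsets in the fully reduced $(LR)^s$ form is exactly the observation the paper leans on (``$a_i=0$ iff there is no left trapezium between $v_i$ and $u_i$,'' so after reduction all $a_i,b_i>0$), and the observation that $I\neq J$ forces $s\ge 1$ is the small point that makes the biconditional well-posed.
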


\begin{re}{\rm  The case when the number of  $LR$  boxes is 1 is exactly the non-crossing case from \cite{JKS}, Proposition 5.6, because existence of exactly one  box means that $I$ and $J$ are non-crossing.  }
\end{re}

\subsubsection{Algorithm} Let us assume that  the exponents $a_i$ and $b_i$ of the matrix $D^*$ are given. If we denote by ${\rm IF(D^*)}$ the set of exponents of the invariant factors of the matrix $D^*$, then the following algorithm computes ${\rm IF(D^*)}$. 

$$IF(D^*):=\{\}, H_0:=\{a_1,\dots, a_{r+1}, b_1,\dots,b_{r+1}\}$$

$i:=1$, $m:=r+1$ ($m$ is the maximal index in $H_i$) 

\noindent REPEAT 

$$h_i:=\min\, H_{i-1}, IF(D^*)=IF(D^*)\cup \{h_i\}$$

CASE 1:  $h_i=a_j$ for some $j$

If $j>1$, then 
$H_{i}=H_{i-1}\setminus \{a_j, b_j, b_{j-1}\}, b_{j-1}={b_{j-1}+b_j-a_j}$

\hspace{22.5mm} $H_i=H_i\cup \{b_{j-1}\}.$

If $j=1$, then 
$H_{i}=H_{i-1}\setminus \{a_j, b_j, b_{m}\}, b_{m}={b_{m}+b_j-a_j},$

\hspace{22.5mm} $H_i=H_i\cup \{b_{m}\}.$

$i=i+1, m=m-1$

Re-enumerate indices of elements of $H_i$, that is, for $q>j$, $a_q$ becomes 

$a_{q-1}$, and $b_q$ becomes $b_{q-1}$.

CASE 2:  $h_i=b_j$ for some $j$

If $j<m$, then 
$H_{i}=H_{i-1}\setminus \{a_j, b_j, a_{j+1}\}, a_{j+1}={a_{j+1}+a_j-b_j},$

\hspace{24mm} $H_i=H_i\cup \{a_{j+1}\}.$

If $j=m$, then $H_{i}=H_{i-1}\setminus \{a_j, b_j, a_{1}\},  a_{1}={a_{1}+a_j-b_j},$

\hspace{24mm} $H_i=H_i\cup \{a_{1}\}.$

$i=i+1, m=m-1$

Re-enumerate indices of elements of $H_i$, that is, for $q>j$, $a_q$ becomes 

$a_{q-1}$, and $b_q$ becomes $b_{q-1}$.  

\noindent UNTIL $i=r+1$

\subsubsection{An example} 
Let us continue with the example from the beginning of this section. The lengths of the lateral sides of the left trapezia are: 1,0,2,1,1.   The lengths of the lateral sides of the right trapezia are: 0,0,1,2,2.  Therefore, the matrix $D^*$ is equal to: 

\[
\left(
\begin{array}{ccccc}
  -t& 0  & 0   & 0  &t^2\\
  1& -1  & 0  &0&0\\
  0&   1& -t^{2}  &0&0\\
 0 &0&t&-t&0\\
  0&0&0&t^2&-t

\end{array}
\right)
\]

By Remark \ref{delete} we can ignore the second column, i.e.\ $D^*$ is equivalent to the matrix 
\[
\left(
\begin{array}{ccccc}
  1& 0  & 0   & 0  &0\\
  0& -t  & 0  &0&t^2\\
  0&   1& -t^{2}  &0&0\\
 0 &0&t&-t&0\\
  0&0&0&t^2&-t

\end{array}
\right)
\]
Now we multiply the second column by $t^2$, add it to the third column, then multiply the third row by $t$ and add it to the the second row, swap the appropriate rows and columns to obtain the matrix 
\[
\left(
\begin{array}{ccccc}
  1& 0  & 0   & 0  &0\\
  0& 1 & 0  &0& 0\\
  0&   0& -t^{3}  &0&t^2\\
 0 &0&t&-t&0\\
  0&0&0&t^2&-t

\end{array}
\right)
\]

We are left with monomials of positive exponent. We choose a monomial with the smallest exponent, say the one in the bottom right corner. Multiply the last column by $t$, add it to the fourth column, then multiply the last row by $t$ and add it to the third row. After row and column swaps we obtain the matrix 

\[
\left(
\begin{array}{ccccc}
  1& 0  & 0   & 0  &0\\
  0& 1 & 0  &0& 0\\
  0&0&t&0&0\\
  
  0&   0&0& -t^{3}  &t^3\\
 0 &0&0&t&-t\\

\end{array}
\right)
\]

It is now obvious that the last two columns are linearly dependent, and that the final matrix we obtain is 
\[
\left(
\begin{array}{ccccc}
  1& 0  & 0   & 0  &0\\
  0& 1 & 0  &0& 0\\
  0&0&t&0&0\\
  
  0&   0&0& t  &0\\
 0 &0&0&0&0\\

\end{array}
\right)
\]

Thus, the free submodule of rank $r=4$ module  is isomorphic to 
$$\mathbb F[t]\times \mathbb F[t]\times tF\mathbb [t]\times tF\mathbb [t].$$
Hence, it follows that 
$${\rm Ext}^1(L_I,L_J)\cong \mathbb F[t]\times \mathbb F[t]\times \mathbb F[t]\times \mathbb F[t]/\mathbb F[t]\times \mathbb F[t]\times t\mathbb F[t]\times t\mathbb F[t],$$
$${\rm Ext}^1(L_I,L_J)\cong \mathbb F[t]/(t)\times \mathbb F[t]/(t).$$

\begin{re}\label{placement}{\rm In this example, we postponed the determination of which column is linearly dependent of the other columns till the very end of our transformations. This is more practical than doing it at the beginning of the computation because it can happen that it is not so obvious how to choose the appropriate column just by using the matrix $D^*$. One can say precisely which column is linearly dependent by looking at the rims $I$ and $J$. It is a column that corresponds to a peak of the rim $I$ that ends up being placed on the rim $J$, when the lattice of $L_I$ is placed inside the lattice of  $L_J$ as far up as possible, when $L_I$ is canonically mapped into $L_J$.  In this example those are the third and the fourth column. If $c_i$ denotes the $i$th column, then 
$$t^2c_1+t^2c_2+c_3+c_4+tc_5=0.$$  
The exponents in this linear combination come from the offsets of the peaks of $I$ from the rim $J$ as seen from the following picture. The offset of $u_1$ from the rim is 2, of $u_2$ is also 2, etc. Let us explain what we mean by this offset. The space  ${\rm Hom} (P_{u_1}, L_J)\cong \mathbb  F[t]$  is generated over $\mathbb F[t]$ by the canonical map $f_{u_1,J}$ that maps $P_{u_1}$ into $L_J$  by placing the peak $u_1$ onto the rim of $J$. The offset equal to 2 means that the homomorphism in question is given by  $t^2f_{u_1,J}$.  
Now, if $a_i$ is the offset of the peak $u_i$, then the map $t^{a_i}f_{u_i,J}$ is mapped under $D^*$ to a map where the  linear coefficient of $f_{v_{i+1},J}$ is the negative of the linear coefficient of  $f_{v_{i+1},J}$ as a summand of $D^*(t^{a_{i+1}}f_{u_{i+1},J})$. Added together they give 0. By taking $f=\oplus t^{a_i} f_{u_i,J}$, we have that $D^*(f)=0$. Moreover, the only maps that are mapped to 0 by $D^*$ are multiples of this $f$. Hence, the image of $D^*$ is a free module of rank $r$.

\[
\begin{tikzpicture}[scale=0.8,baseline=(bb.base),
quivarrow/.style={black, -latex, thick}]
\newcommand{\seventh}{51.4} 
\newcommand{\circradius}{1.5cm}
\newcommand{\inradius}{1.2cm}
\newcommand{\outradius}{1.8cm}
\newcommand{\dotrad}{0.1cm} 
\newcommand{\bdrydotrad}{{0.8*\dotrad}} 
\path (0,0) node (bb) {}; 


\draw (0,-7) circle(\bdrydotrad) [fill=black];
\draw (1,-6) circle(\bdrydotrad) [fill=black];
\draw (2,-7) circle(\bdrydotrad) [fill=black];
\draw (3,-6) circle(\bdrydotrad) [fill=black];
\draw (4,-7) circle(\bdrydotrad) [fill=black];
\draw (5,-6) circle(\bdrydotrad) [fill=black];
\draw (6,-5) circle(\bdrydotrad) [fill=black];
\draw (7,-4) circle(\bdrydotrad) [fill=black];
\draw (8,-3) circle(\bdrydotrad) [fill=black];
\draw (9,-4) circle(\bdrydotrad) [fill=black];
\draw (10,-3) circle(\bdrydotrad) [fill=black];
\draw (11,-4) circle(\bdrydotrad) [fill=black];
\draw (12,-5) circle(\bdrydotrad) [fill=black];
\draw (13,-4) circle(\bdrydotrad) [fill=black];
\draw (14,-5) circle(\bdrydotrad) [fill=black];
\draw (15,-6) circle(\bdrydotrad) [fill=black];


\draw (0,-1) circle(\bdrydotrad) [fill=black];
\draw (1,-2) circle(\bdrydotrad) [fill=black];
\draw (2,-3) circle(\bdrydotrad) [fill=black];
\draw (3,-2) circle(\bdrydotrad) [fill=black];
\draw (4,-3) circle(\bdrydotrad) [fill=black];
\draw (5,-2) circle(\bdrydotrad) [fill=black];
\draw (7,-4) circle(\bdrydotrad) [fill=black];
\draw (9,-2) circle(\bdrydotrad) [fill=black];
\draw (10,-3) circle(\bdrydotrad) [fill=black];
\draw (12,-1) circle(\bdrydotrad) [fill=black];
\draw (13,-2) circle(\bdrydotrad) [fill=black];
\draw (14,-1) circle(\bdrydotrad) [fill=black];
\draw (15,0) circle(\bdrydotrad) [fill=black];
\draw (6,-3) circle(\bdrydotrad) [fill=black];
\draw (11,-2) circle(\bdrydotrad) [fill=black];
\draw (12,-3) circle(\bdrydotrad) [fill=black];
\draw (14,-3) circle(\bdrydotrad) [fill=black];
\draw (15,-2) circle(\bdrydotrad) [fill=black];
\draw (15,-4) circle(\bdrydotrad) [fill=black];

\draw[black] (1,-4) circle(\bdrydotrad) [fill=black];

\draw[black] (3,-4) circle(\bdrydotrad) [fill=black];
\draw[black] (2,-5) circle(\bdrydotrad) [fill=black];
\draw[black] (0,-5) circle(\bdrydotrad) [fill=black];

\draw[black] (0,-3) circle(\bdrydotrad) [fill=black];

\draw[black] (4,-5) circle(\bdrydotrad) [fill=black];
\draw[black] (5,-4) circle(\bdrydotrad) [fill=black];


\draw [quivarrow,shorten <=5pt, shorten >=5pt] (1,-6)
-- node[above]{$1$} (0,-7);
\draw [quivarrow,shorten <=5pt, shorten >=5pt] (1,-6) -- node[above]{$2$} (2,-7);
\draw [quivarrow,shorten <=5pt, shorten >=5pt] (3,-6) -- node[above]{$3$} (2,-7);
\draw [quivarrow,shorten <=5pt, shorten >=5pt] (3,-6) -- node[above]{$4$} (4,-7);
\draw [quivarrow,shorten <=5pt, shorten >=5pt] (5,-6) -- node[above]{$5$} (4,-7);
\draw [quivarrow,shorten <=5pt, shorten >=5pt] (6,-5) -- node[above]{$6$} (5,-6);
\draw [quivarrow,shorten <=5pt, shorten >=5pt] (7,-4) -- node[above]{$7$} (6,-5);
\draw [quivarrow,shorten <=5pt, shorten >=5pt] (8,-3) -- node[above]{$8$} (7,-4);
\draw [quivarrow,shorten <=5pt, shorten >=5pt] (8,-3) -- node[above]{$9$} (9,-4);
\draw [quivarrow,shorten <=5pt, shorten >=5pt] (10,-3) -- node[above]{$10$} (9,-4);
\draw [quivarrow,shorten <=5pt, shorten >=5pt] (10,-3) -- node[above]{$11$} (11,-4);
\draw [quivarrow,shorten <=5pt, shorten >=5pt] (11,-4) -- node[above]{$12$} (12,-5);
\draw [quivarrow,shorten <=5pt, shorten >=5pt] (13,-4) -- node[above]{$13\,\,$} (12,-5);
\draw [quivarrow,shorten <=5pt, shorten >=5pt] (13,-4) -- node[above]{$14\,\,$} (14,-5);
\draw [quivarrow,shorten <=5pt, shorten >=5pt] (14,-5) --node[above]{$15\,\,$}(15,-6);




\draw[dashed] (0,-1)--(1,-2)--(2,-3)--(3,-2)--(4,-3)--(5,-2)--(7,-4)--(9,-2)--(10,-3)--(12,-1)--(13,-2)--(14,-1)--(15,0);

\draw [dotted] (0,-8) -- (0,1);
\draw [dotted] (15,-8) -- (15,1);


\end{tikzpicture}
\]

Also, we remark that even though we only used transformations on the columns that are not the last column, our transformations are valid for the last column as well. One can always think of columns being cyclically reordered so that the last column is now somewhere in the middle of the matrix.

}
\end{re}
\subsubsection{Commutativity of the ${\rm Ext}^{1}(-,-)$ functor} 

\begin{te} If $L_I$ and $L_J$ are rank $1$ modules, then, as $\mathbb F[t]$-modules
$${\rm Ext}^{1}(L_I,L_J)\cong {\rm Ext}^{1}(L_J,L_I).$$
\end{te}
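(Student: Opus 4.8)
The plan is to prove the symmetry by showing that both $\mathrm{Ext}^1(L_I,L_J)$ and $\mathrm{Ext}^1(L_J,L_I)$ are computed from the \emph{same} combinatorial data, namely the sequence of trapezia determined by the rims $I$ and $J$. First I would observe that the $LR$-word construction $w_{I,J}$ of Subsection 3.1 is manifestly symmetric in $I$ and $J$ except that left and right trapezia get interchanged: if we draw the rim of $L_J$ above the rim of $L_I$ instead of the other way around, each trapezium keeps its shape but its shorter base-edge moves from the left to the right (and vice versa), so a left trapezium becomes a right trapezium. Consequently the word $w_{J,I}$ is obtained from $w_{I,J}$ by swapping every $L$ with every $R$; after reduction this means the reduced words $(LR)^s$ have the \emph{same} rank $s$. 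In particular the number of $LR$ boxes is the same for the ordered pair $(I,J)$ as for $(J,I)$, which already gives that the Ext-modules have the same number $s-1$ of cyclic summands.

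The heart of the argument is that the multiset of invariant factors is also the same. Here I would compare the two matrices $D^*_{I,J}$ and $D^*_{J,I}$ arising from the resolutions of $L_I$ and of $L_J$ respectively. The exponents appearing in $D^*_{I,J}$ are, by the discussion following (\ref{eq:D*coeffs}), the lateral-side lengths (offsets) of the left trapezia (entered on the main diagonal, coming from the stretches where $I$ has upward tendency) and of the right trapezia (entered on the subdiagonal, from the downward stretches of $I$). When we pass to $D^*_{J,I}$ the roles of $I$ and $J$ are exchanged, so "upward stretch of $J$" replaces "upward stretch of $I$"; but a stretch where $J$ goes up while $I$ goes down is exactly a right trapezium for $(I,J)$ and a left trapezium for $(J,I)$ with the \emph{same} lateral-side length. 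Hence the multiset $\{a_1,b_1,a_2,b_2,\dots\}$ of exponents feeding into the algorithm of Subsection 3.2 is literally the same for $(I,J)$ and $(J,I)$, only with the labels $a\leftrightarrow b$ swapped, and the starting point $H_0$ of the algorithm is unchanged. Since the reduction algorithm (CASE 1 / CASE 2) is symmetric under the interchange $a_j\leftrightarrow b_j$ together with reversal of the cyclic order (which corresponds to reflecting the box diagram left-to-right, an operation that does not change invariant factors), it produces the same output set $\mathrm{IF}(D^*)=\{h_1,\dots,h_{s-1}\}$ in both cases.

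More structurally, I would package the previous paragraph as: $D^*_{J,I}$ is, up to a permutation of rows and columns and multiplication of some rows/columns by $-1$ (both of which are $\mathbb F[t]$-equivalences and hence preserve invariant factors, which is all $\mathrm{Ext}^1$ depends on by Theorem \ref{glavna}), the transpose of $D^*_{I,J}$; and a matrix over the PID $\mathbb F[t]$ has the same invariant factors as its transpose. The transpose relation itself is visible from the bidiagonal shape displayed after Theorem \ref{glavna}: transposing sends the $-t^{a_i}$ diagonal entries to themselves and swaps each lower-subdiagonal $t^{b_i}$ with an upper-subdiagonal entry, and a cyclic relabelling turns the upper subdiagonal back into a lower subdiagonal — which is precisely the shape of $D^*_{J,I}$ with $a_i$ and $b_i$ in swapped positions. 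Therefore $\mathrm{im}\,D^*_{I,J}$ and $\mathrm{im}\,D^*_{J,I}$ are free submodules with identical invariant factors inside free modules of the same rank, and the quotients are isomorphic as $\mathbb F[t]$-modules.

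The main obstacle I anticipate is bookkeeping rather than conceptual: one must be careful about the normalisation conventions (which rim is drawn on top, which vertex is placed at position $n$, which column of $D^*$ is the linearly dependent one) so that the claimed transpose/relabelling identity is exact and not merely "up to something". In particular the corank-$1$ feature — the existence of a distinguished column with a unit coefficient in the kernel relation, described in Remark \ref{placement} — must be matched up on both sides: the peaks of $I$ lying on the rim of $J$ correspond, after swapping the roles of the two rims, to the valleys of $J$ lying on the rim of $I$, and one should check these two sets have the same size $s$ so that the corank-$1$ structure is preserved under transposition. Once these conventions are pinned down, the equivalence $D^*_{J,I}\sim (D^*_{I,J})^{\mathrm{T}}$ (up to signed permutation and cyclic relabelling) is routine, and the theorem follows from invariance of Smith normal form under transposition.
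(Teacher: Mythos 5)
Your approach is essentially the paper's: draw the two rims one above the other, note that swapping which rim is on top turns each left trapezium into a right trapezium with the same lateral side, and conclude that the combinatorial data governing the invariant factors is symmetric up to $L\leftrightarrow R$, which the Smith normal form computation respects because reversing the cyclic order sends a matrix to (a signed permutation of) its transpose.

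There is, however, a gap in the way you ``package'' this in your third paragraph. You assert that $D^*_{J,I}$ is, up to signed row/column permutation, the transpose of $D^*_{I,J}$. This cannot be correct as stated, because $D^*_{I,J}$ is an $(r_I+1)\times(r_I+1)$ matrix (indexed by the peaks and valleys of $I$) while $D^*_{J,I}$ is $(r_J+1)\times(r_J+1)$ (indexed by those of $J$), and $r_I$ and $r_J$ need not be equal --- take, say, $I=\{1,3,5\}$ and $J=\{1,2,4\}$ with $n=6$, $k=3$, which give $3\times 3$ and $2\times 2$ matrices respectively. The same size mismatch quietly infects the claim in your second paragraph that ``the starting point $H_0$ of the algorithm is unchanged'': the multiset of \emph{nonzero} exponents is indeed preserved under $a\leftrightarrow b$, but the number of \emph{zero} exponents can differ (it is $2(r_I+1)$ minus the number of genuine trapezia on one side and $2(r_J+1)$ minus the same on the other). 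These extra zeros only contribute trivial invariant factors, so the torsion module is unaffected, but you cannot say the multisets are ``literally the same.'' The paper sidesteps both issues by first performing the $L/R$ reduction (i.e.\ clearing out all the unit entries coming from parallel stretches of the rims), arriving at two $s\times s$ matrices of the same size, and \emph{only then} observing that with a suitable cyclic re-enumeration of the valleys of $J$ one matrix is $-A^{\mathrm t}$ where the other is $A$. If you insert that reduction step before invoking the transpose argument, your proof becomes a faithful rendering of the paper's.
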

\begin{proof}
Let us draw the rims of $L_I$ and $L_J$ one below the other, with the rim of $L_I$ above, and with an additional copy of the rim of $L_I$ below the rim of $L_J$. We also draw the trapezia we used to determine the extensions between the two rank 1 modules, with the upper trapezia used to compute ${\rm Ext}^{1}(L_I,L_J)$ and lower trapezia to compute ${\rm Ext}^{1}(L_J,L_I)$.

\[
\begin{tikzpicture}[scale=0.8,baseline=(bb.base),
quivarrow/.style={black, -latex, thick}]
\newcommand{\seventh}{51.4} 
\newcommand{\circradius}{1.5cm}
\newcommand{\inradius}{1.2cm}
\newcommand{\outradius}{1.8cm}
\newcommand{\dotrad}{0.1cm} 
\newcommand{\bdrydotrad}{{0.8*\dotrad}} 
\path (0,0) node (bb) {}; 


\draw (0,0) circle(\bdrydotrad) [fill=black];
\draw (1,1) circle(\bdrydotrad) [fill=black];
\draw (2,0) circle(\bdrydotrad) [fill=black];
\draw (3,1) circle(\bdrydotrad) [fill=black];
\draw (4,0) circle(\bdrydotrad) [fill=black];
\draw (5,1) circle(\bdrydotrad) [fill=black];
\draw (6,2) circle(\bdrydotrad) [fill=black];
\draw (7,3) circle(\bdrydotrad) [fill=black];
\draw (8,4) circle(\bdrydotrad) [fill=black];
\draw (9,3) circle(\bdrydotrad) [fill=black];
\draw (10,4) circle(\bdrydotrad) [fill=black];
\draw (11,3) circle(\bdrydotrad) [fill=black];
\draw (12,2) circle(\bdrydotrad) [fill=black];
\draw (13,3) circle(\bdrydotrad) [fill=black];
\draw (14,2) circle(\bdrydotrad) [fill=black];
\draw (15,1) circle(\bdrydotrad) [fill=black];


\draw (0,-1) circle(\bdrydotrad) [fill=black];
\draw (1,-2) circle(\bdrydotrad) [fill=black];
\draw (2,-3) circle(\bdrydotrad) [fill=black];
\draw (3,-2) circle(\bdrydotrad) [fill=black];
\draw (4,-3) circle(\bdrydotrad) [fill=black];
\draw (5,-2) circle(\bdrydotrad) [fill=black];
\draw (7,-4) circle(\bdrydotrad) [fill=black];
\draw (9,-2) circle(\bdrydotrad) [fill=black];
\draw (10,-3) circle(\bdrydotrad) [fill=black];
\draw (12,-1) circle(\bdrydotrad) [fill=black];
\draw (13,-2) circle(\bdrydotrad) [fill=black];
\draw (14,-1) circle(\bdrydotrad) [fill=black];
\draw (15,0) circle(\bdrydotrad) [fill=black];
\draw (6,-3) circle(\bdrydotrad) [fill=black];
\draw (8,-3) circle(\bdrydotrad) [fill=black];
\draw (11,-2) circle(\bdrydotrad) [fill=black];


\draw [quivarrow,shorten <=5pt, shorten >=5pt] (1,1)
-- node[above]{$1$} (0,0);
\draw [quivarrow,shorten <=5pt, shorten >=5pt] (1,1) -- node[above]{$2$} (2,0);
\draw [quivarrow,shorten <=5pt, shorten >=5pt] (3,1) -- node[above]{$3$} (2,0);
\draw [quivarrow,shorten <=5pt, shorten >=5pt] (3,1) -- node[above]{$4$} (4,0);
\draw [quivarrow,shorten <=5pt, shorten >=5pt] (5,1) -- node[above]{$5$} (4,0);
\draw [quivarrow,shorten <=5pt, shorten >=5pt] (6,2) -- node[above]{$6$} (5,1);
\draw [quivarrow,shorten <=5pt, shorten >=5pt] (7,3) -- node[above]{$7$} (6,2);
\draw [quivarrow,shorten <=5pt, shorten >=5pt] (8,4) -- node[above]{$8$} (7,3);
\draw [quivarrow,shorten <=5pt, shorten >=5pt] (8,4) -- node[above]{$9$} (9,3);
\draw [quivarrow,shorten <=5pt, shorten >=5pt] (10,4) -- node[above]{$10$} (9,3);
\draw [quivarrow,shorten <=5pt, shorten >=5pt] (10,4) -- node[above]{$11$} (11,3);
\draw [quivarrow,shorten <=5pt, shorten >=5pt] (11,3) -- node[above]{$12$} (12,2);
\draw [quivarrow,shorten <=5pt, shorten >=5pt] (13,3) -- node[above]{$13\,\,$} (12,2);
\draw [quivarrow,shorten <=5pt, shorten >=5pt] (13,3) -- node[above]{$14\,\,$} (14,2);
\draw [quivarrow,shorten <=5pt, shorten >=5pt] (14,2) --node[above]{$15\,\,$}(15,1);




\draw[dashed, thick] (0,-1)--(1,-2)--(2,-3)--(3,-2)--(4,-3)--(5,-2)--(7,-4)--(9,-2)--(10,-3)--(12,-1)--(13,-2)--(14,-1)--(15,0);

\draw (0,-10) circle(\bdrydotrad) [fill=black];
\draw (1,-9) circle(\bdrydotrad) [fill=black];
\draw (2,-10) circle(\bdrydotrad) [fill=black];
\draw (3,-9) circle(\bdrydotrad) [fill=black];
\draw (4,-10) circle(\bdrydotrad) [fill=black];
\draw (5,-9) circle(\bdrydotrad) [fill=black];
\draw (6,-8) circle(\bdrydotrad) [fill=black];
\draw (7,-7) circle(\bdrydotrad) [fill=black];
\draw (8,-6) circle(\bdrydotrad) [fill=black];
\draw (9,-7) circle(\bdrydotrad) [fill=black];
\draw (10,-6) circle(\bdrydotrad) [fill=black];
\draw (11,-7) circle(\bdrydotrad) [fill=black];
\draw (12,-8) circle(\bdrydotrad) [fill=black];
\draw (13,-7) circle(\bdrydotrad) [fill=black];
\draw (14,-8) circle(\bdrydotrad) [fill=black];
\draw (15,-9) circle(\bdrydotrad) [fill=black];

\draw[black] (0,-10)--(1,-9)--(2,-10)--(3,-9)--(4,-10)--(5,-9)--(6,-8)--(7,-7)--(8,-6)--(9,-7)--(10,-6)--(11,-7)--(12,-8)--(13,-7)--(14,-8)--(15,-9);

\draw [dotted] (0,-10) -- (0,2);
\draw [dotted] (15,-10) -- (15,3);



\draw [dotted] (1,-2) -- (1,1);
\draw [dotted] (5,-2) -- (5,1);
\draw [dotted] (7,-4) -- (7,3);
\draw [dotted] (8,-3) -- (8,4);
\draw [dotted] (9,-2) -- (9,3);
\draw [dotted] (10,-3) -- (10,4);

\draw [dotted] (12,-1) -- (12,2);
\draw [dotted] (13,-2) -- (13,3);


\draw [dotted] (1,-2) -- (1,-9);
\draw [dotted] (5,-2) -- (5,-9);
\draw [dotted](7,-4) -- (7,-7);
\draw [dotted] (8,-3) -- (8,-6);
\draw [dotted] (9,-2) -- (9,-7);
\draw [dotted] (10,-3) -- (10,-6);
\draw [dotted](12,-1) -- (12,-8);
\draw [dotted](13,-2) -- (13,-7);

\end{tikzpicture}
\]
\vspace{2mm}

For every left (resp.\ right) trapezium in the upper part of the above picture there is the corresponding right (resp.\ left) trapezium in the lower part of the picture. In other words, whenever $I$ and $J$ have different tendencies, it is also true for $J$ and $I$. Thus, the word consisting of $L$s and $R$s in the lower case is obtained from the word in the upper case by changing $R$s to $L$s, and $L$s to $R$s. Moreover, the corresponding trapezia are of the same lateral size, because they share a lateral side.  So, after the initial step of reducing multiple $L$s and $R$s to single $L$s and $R$s, when computing ${\rm Ext}^1(L_I,L_J)$ we get a block diagonal matrix with certain number of $1$s on the main diagonal and a matrix $A$ in the lower right corner. If we enumerate the  valleys of the rim $J$ in such a way that the first valley is the valley to the right of 0, then the corresponding matrix, obtained after the initial step of reducing multiple $L$s and $R$s to single $L$s and $R$s when computing ${\rm Ext}^1(L_J,L_I)$,  is a block diagonal matrix with certain number of $1$s on the main diagonal and with matrix $-A^t$ in the lower right corner. Since $A$ and $-A^t$ have the same set of invariant factors, it follows that ${\rm Ext}^1(L_I,L_J)\cong {\rm Ext}^1(L_J,L_I).$   

\end{proof}

\subsection{Higher extensions}
We now compute higher extensions for rank 1 Cohen-Macaulay modules. After showing how to compute higher extensions of odd degree, we prove that the even degree extensions are cyclic $\mathbb F[t]$-modules, and we show how to combinatorially compute generators of these cyclic modules. In the end we give a combinatorial criterion for vanishing of higher extension spaces between rank 1 modules. 

From the first section we know that the rank 1 modules are periodic, and moreover, for a given rim $I$,  every even syzygy in a minimal projective resolution of $L_I$ is a rank 1 module. This immediately gives us the following statement.

\begin{prop}
Let $L_I$ and $L_J$ be rank $1$ modules and $k$ a positive integer. Then there exist positive integers $h_1, h_2,\dots, h_{s-1}$, such that $h_i\leq h_{i+1}$, and, as modules over the centre $\mathbb F[t]$,  
$${\rm Ext}^{2k+1}(L_I,L_J)\cong \mathbb F[t]/(t^{h_1})\times \mathbb F[t]/(t^{h_2})\times \cdots \times \mathbb F[t]/(t^{h_{s-1}}),$$ where $s$ is equal to the number of $LR$ trapezia for the rims of $\Omega^{2k}(L_I)$ and $L_J$.  
\end{prop}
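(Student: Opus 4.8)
The plan is to reduce the computation of $\mathrm{Ext}^{2k+1}(L_I,L_J)$ to the already-settled case of $\mathrm{Ext}^1$ by dimension shifting, using the structure of the minimal projective resolution of $L_I$ described in Section~2. Let
\[
\cdots \longrightarrow P_2 \longrightarrow P_1 \longrightarrow P_0 \longrightarrow L_I \longrightarrow 0
\]
be the minimal projective resolution of $L_I$, each $P_j$ being a finite direct sum of the indecomposable projectives $P_v=Be_v$. For every $m\geq 1$ the truncated complex $\cdots\to P_{m+1}\to P_m\to \Omega^m(L_I)\to 0$ is again a projective resolution, so applying $\mathrm{Hom}_B(-,L_J)$ and taking cohomology yields the standard isomorphisms $\mathrm{Ext}^{i+m}(L_I,L_J)\cong \mathrm{Ext}^{i}(\Omega^m(L_I),L_J)$ for all $i\geq 1$; no finiteness hypothesis is required here, since the $P_j$ are projective and hence $\mathrm{Ext}^{>0}(P_j,-)=0$. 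Taking $m=2k$ and $i=1$ gives
\[
\mathrm{Ext}^{2k+1}(L_I,L_J)\cong \mathrm{Ext}^1\bigl(\Omega^{2k}(L_I),L_J\bigr).
\]

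Next I would invoke Section~2: every second syzygy of a rank~$1$ module is again a rank~$1$ module, so there is a $k$-subset $I^{2k}$ with $\Omega^{2k}(L_I)\cong L_{I^{2k}}$ (when $\mathrm{rk}\,\Omega(L_I)=1$ this is explicit from the formulas for $I^{m}$, and when $\mathrm{rk}\,\Omega(L_I)>1$ it is exactly the content of the discussion preceding the periodicity theorems). Substituting, the claim follows by a direct application of Theorem~\ref{glavna} to the pair of rank~$1$ Cohen-Macaulay modules $L_{I^{2k}}$ and $L_J$: that theorem produces precisely a decomposition $\mathbb F[t]/(t^{h_1})\times\cdots\times\mathbb F[t]/(t^{h_{s-1}})$ with $0<h_1\leq\cdots\leq h_{s-1}$, where $s$ is the number of $LR$ boxes for the rims $I^{2k}$ and $J$, i.e.\ for the rims of $\Omega^{2k}(L_I)$ and $L_J$, as asserted. (In the degenerate case $I^{2k}=J$ there are no trapezia, $s=0$, and both sides vanish, so the statement holds trivially there too.) All of the isomorphisms above are isomorphisms of $Z=\mathbb F[t]$-modules, because $Z$ is central, whence $\mathrm{Hom}_B(-,-)$ and its derived functors are naturally $Z$-linear and both the dimension-shift connecting maps and the identification in Theorem~\ref{glavna} are $Z$-linear.

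The only point requiring genuine care — and the main, minor, obstacle — is to ensure that the passage $\Omega^{2k}(L_I)\cong L_{I^{2k}}$ is legitimate in the required sense: one must know that the syzygies computed combinatorially in Section~2 really are the syzygies in a \emph{minimal} projective resolution, so that the truncated complexes above are again minimal resolutions and the combinatorial description of the rim $I^{2k}$ is the correct one to feed into Theorem~\ref{glavna}. This is exactly what the projective-cover arguments of Section~2 establish (the projective cover of $L_I$ is read off from the peaks of its rim, that of $\Omega(L_I)$ from the valleys, and so on). Once that is in hand, nothing further is needed, and in particular the number $s$ and the exponents $h_i$ are produced by the explicit algorithm accompanying Theorem~\ref{glavna}, now applied to the rims of $\Omega^{2k}(L_I)$ and $L_J$.
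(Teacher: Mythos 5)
Your proposal is correct and follows the same route as the paper: dimension shift $\mathrm{Ext}^{2k+1}(L_I,L_J)\cong\mathrm{Ext}^1(\Omega^{2k}(L_I),L_J)$, observe from Section~2 that $\Omega^{2k}(L_I)$ is again a rank~$1$ module $L_{I^{2k}}$, and apply Theorem~\ref{glavna}. The extra remarks about $Z$-linearity and minimality of the resolution are sensible but not a departure from the paper's argument.
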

\begin{proof}
From the dimension shift formula we have that 
$${\rm Ext}^{2k+1}(L_I,L_J)\cong {\rm Ext}^1(\Omega^{2k}(L_I),J).$$ 
From the first section we know that $\Omega^{2k}(L_I)$ is a rank 1 module and the statement follows from Theorem \ref{glavna}.

\end{proof}

We are left to compute even degree extensions between rank 1 modules.  If we want to compute ${\rm Ext}^{2}(L_I,L_J)$, it is sufficient to compute ${\rm Ext}^{1}(\Omega(L_I),L_J).$
Applying ${\rm Hom}(-, L_{J})$ to the projective resolution of $\Omega(L_I)$ 

\begin{equation}\label{fff}
\xymatrix{\displaystyle\bigoplus_{w\in W} P_w \ar[r]^{F} &\displaystyle\bigoplus_{v\in V}P_v\ar[r]^D&\Omega(L_{I})}
\end{equation}

yields
\[
\begin{tikzpicture}[scale=0.8,
 arr/.style={black, -angle 60}]
\path (0,3) node (b0) {$\displaystyle\bigoplus_{v\in V}^{\phantom{U}} {\rm Hom}({P_v},{L_{J}})$};
\path (5,0.2) node(a1) {$\displaystyle\bigoplus_{w\in W} {\rm Hom}({P_w},{L_{J}})$};
\path (5,3) node (b1) {${\rm Hom}({\Omega^2(L_{I})},{L_{J}})$};
\path (5,4.6) node (c1) {$0$};
\path (10,3) node(b2) {${\rm Ext}^1({\Omega(L_{I}}),{L_{J}})$};
\path (12.7,3) node (by) {$0$};
\path[arr] (b0) edge (b1);
\path[arr] (b1) edge (b2);
\path[arr] (b2) edge (by);
\path[arr] (c1) edge (b1);
\path[arr] (b1) edge (a1);
\path[arr] (b0) edge node[auto] {\small{$F^*$}} (a1);
\end{tikzpicture}
\]
Here, $W$ is the set of the peaks of the second syzygy of $L_I$. From the previous section we know that $W=U+k$, where $U=\{u_1,u_2,\dots,u_r\}$ is the set of the cyclically ordered peaks of the rim $I$. Let $V=\{v_1,v_2,\dots,v_r\}$ be the set of cyclically ordered valleys of the rim $I$. We say that $u$ is to the left of $v$ if in the cyclic ordering of $\{1,2,\dots,n\}$, the interval $(u,v]$ does not have more than  $k$ elements. Otherwise, we say that $u$ is to the right of $v$. We assume that $u_1$ is to the right of $v_1$, and that $u_r$ is to the left of $v_1$.

\begin{te}
Let $L_I$ and $L_J$ be rank $1$ modules and $m$ a positive integer. There exists a non-negative integer $a$,  such that, as  $\mathbb F[t]$-modules,  $${\rm Ext}^{2m}(L_I,L_J)\cong \mathbb F[t]/ (t^a).$$ 
\end{te}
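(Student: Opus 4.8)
The plan is to mimic the analysis already carried out for $\operatorname{Ext}^1$, but now applied to the resolution \eqref{fff} of $\Omega(L_I)$ after applying $\operatorname{Hom}(-,L_J)$. By the dimension shift formula $\operatorname{Ext}^{2m}(L_I,L_J)\cong\operatorname{Ext}^1(\Omega^{2m-1}(L_I),L_J)$, and since $\Omega^{2m-1}(L_I)=\Omega(\Omega^{2m-2}(L_I))$ with $\Omega^{2m-2}(L_I)\cong L_{I^{2m-2}}$ a rank $1$ module (by the periodicity results of Section~2), it suffices to treat the case $m=1$, i.e.\ to compute $\operatorname{Ext}^1(\Omega(L_I),L_J)$ as the cokernel of $F^*$, where $F$ is the map in the minimal projective presentation of $\Omega(L_I)$. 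So first I would reduce to computing $\operatorname{coker} F^*$.

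Next I would pin down the shape of $F^*$. The matrix $F$ (like $D$) is supported on two cyclic diagonals, with entries powers of $x$ and $y$ indexed by the alternation of peaks $W=U+k$ and valleys $V$ of the second syzygy; applying $\operatorname{Hom}(-,L_J)$ replaces these by monomials $t^{a_i}$, $-t^{b_i}$ whose exponents record the offsets of the rims of $\Omega^2(L_I)=L_{I+k}$ and $L_J$, exactly as in \eqref{eq:D*coeffs}. The crucial structural difference from the $\operatorname{Ext}^1$ computation is that here the \emph{source} $\bigoplus_{v\in V}\operatorname{Hom}(P_v,L_J)$ has rank $r$ and the \emph{target} $\bigoplus_{w\in W}\operatorname{Hom}(P_w,L_J)$ also has rank $r$, but $\operatorname{im} F^*$ should have rank $r-1$ (since $\operatorname{Hom}(\Omega^2(L_I),L_J)$ is free of rank $1$ sitting inside the target). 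So $F^*$ is a square $r\times r$ matrix of corank $1$. Running the same sequence of elementary row/column operations over $\mathbb F[t]$ as in the proof of Theorem~\ref{glavna} — merging consecutive trapezia of the same orientation, then repeatedly extracting the minimal-exponent monomial — reduces $F^*$ to a diagonal matrix with entries $1,\dots,1,t^{h_1},\dots,t^{h_{s-1}},0$. The key new point is that now the number of $LR$ boxes between the rims of $\Omega^2(L_I)=L_{I+k}$ and $L_J$ is forced to be $s=2$: indeed $\operatorname{coker} F^*$ must equal $\operatorname{Ext}^1(\Omega(L_I),L_J)$, whose rank as an $\mathbb F[t]$-module we can compute independently. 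Concretely, the target has rank $r$, the image has rank $r-1$ and after diagonalization contributes $r-2$ copies of $t^{h_i}$ (for $i=1,\dots,s-1$) together with units; matching $r-2=s-1$ against the fact that the reduced word $w_{I+k,J}$ has the form $(LR)^s$ gives $s=1$ only in the split case, and in general the structure of $\Omega(L_I)$ as a diagonally embedded submodule of $\bigoplus P_v$ (Remark~\ref{embedding}) pins $s$ down so that only \emph{one} invariant factor $t^a$ survives. Thus $\operatorname{Ext}^{2}(L_I,L_J)\cong\mathbb F[t]/(t^a)$, and the general even case follows by replacing $I$ with $I^{2m-2}$.

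The main obstacle is the last point: establishing rigorously that exactly one non-unit invariant factor survives, equivalently that the reduced $LR$-word associated to the pair $(\Omega^2(L_I),L_J)$ — or more precisely the matrix $F^*$ after the reduction — collapses to a single cyclic summand. The cleanest route is probably to argue via ranks: $\operatorname{Hom}(\Omega(L_I),L_J)$ and $\operatorname{Hom}(\Omega^2(L_I),L_J)$ are both free, of ranks $1$ and $1$ respectively as $\mathbb F[t]$-modules (each syzygy being indecomposable of rank $1$ in the relevant sense, or by the rank additivity and the free-over-$Z$ characterization of Cohen–Macaulay modules), so in the long exact sequence the connecting data forces $\operatorname{Ext}^1(\Omega(L_I),L_J)$ to be the cokernel of a map between free modules whose ranks differ by exactly the right amount to leave a single torsion cyclic quotient. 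Once the rank bookkeeping is in place, the explicit value of $a$ (which one expects to be read off combinatorially from the rims of $L_{I+k}$ and $L_J$, namely as the sum of all the lateral offsets of the trapezia, i.e.\ $a=\sum_i a_i+\sum_i b_i$ over the reduced word) drops out of the Smith-normal-form reduction; but for the statement as given, only the existence of $a$ is needed, so I would keep the combinatorial identification of $a$ for a subsequent remark rather than burdening the proof with it.
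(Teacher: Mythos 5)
Your high-level reduction (dimension shift to $m=1$, then compute $\operatorname{coker} F^*$) matches the paper's, but the argument you build on it contains a structural error that sinks the key step.

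You assert that $F$, like $D$, is supported on two cyclic diagonals, and then try to run the same LR-trapezia / banded Smith-normal-form reduction that worked for $D^*$. This is not the case. Recall Remark~\ref{embedding}: the inclusion $\Omega^2(L_I)\hookrightarrow\bigoplus_{v\in V}P_v$ is a \emph{diagonal} embedding, every component map $\Omega^2(L_I)\to P_v$ being (a power of $t$ times) the canonical injection. The map $F$ is the composite of the projective cover $\bigoplus_{w\in W}P_w\twoheadrightarrow\Omega^2(L_I)$ with that diagonal inclusion, so \emph{every} entry $F_{v,w}$ is a nonzero power of $t$ times the canonical generator of $\operatorname{Hom}(P_w,P_v)$: $F$ is a full $(r+1)\times(r+1)$ matrix, not a banded one, and the banded elimination you propose simply does not apply. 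What saves the day — and this is the point your proposal misses — is precisely that $F$ factors through the rank-$1$ module $\Omega^2(L_I)$. Hence $F^*$ factors through $\operatorname{Hom}(\Omega^2(L_I),L_J)\cong\mathbb F[t]$, so $F^*$ has rank $1$ as an $\mathbb F[t]$-matrix, and because each entry is a monomial its single nonzero invariant factor is a power of $t$. That one observation yields the cyclicity and the monomial annihilator in one stroke; the paper computes $a$ as the minimum of the exponents in a suitable row and column, not as a sum of offsets as you suggest at the end.

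Your alternative "rank bookkeeping" route also fails on the numbers: $\operatorname{Hom}(\Omega(L_I),L_J)$ is free of rank $r$ (from the four-term exact sequence relating it to $\bigoplus_u\operatorname{Hom}(P_u,L_J)$ and the torsion module $\operatorname{Ext}^1(L_I,L_J)$), not of rank $1$, and $\operatorname{im} F^*$ has rank $1$, not $r-1$. The correct, and short, version of the rank argument is the one the paper gives: $\operatorname{Ext}^2(L_I,L_J)\cong\overline{\operatorname{Hom}}(\Omega^2(L_I),L_J)$ is a quotient of the rank-$1$ free module $\operatorname{Hom}(\Omega^2(L_I),L_J)$ by a free submodule, hence of the form $\mathbb F[t]/(p(t))$; the factorization of $F^*$ through that rank-$1$ module, together with monomiality of its entries, then forces $p(t)$ to be a monomial.
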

\begin{proof}
Using the dimension shift formula again we have that 
$${\rm Ext}^{2m}(L_I,L_J)\cong {\rm Ext}^2(\Omega^{2m-2}(L_I),J).$$
Since $\Omega^{2k-2}(L_I)$ is a rank 1 module, we are left to prove the statement for ${\rm Ext}^{2}(L_I,L_J)$.  Since 
${\rm Ext}^{2}(L_I,L_J)\cong  \overline{{\rm Hom}} (\Omega^2(L_I),L_J)$ 
it follows that ${\rm Ext}^{2m}(L_I,L_J)\cong  \mathbb F[t]/ (p(t))$ for some polynomial $p(t)$, as ${\rm Hom}(\Omega^2(L_I),L_J)$ is a rank 1 module, and $\overline{{\rm Hom}}(\Omega^2(L_I),L_J)$ is its quotient by a free submodule. We now prove that this polynomial $p(t)$ is a monomial. If we want to compute ${\rm Ext}^{2}(L_I,L_J)$, it is sufficient to compute ${\rm Ext}^{1}(\Omega(L_I),L_J)$.

From the above diagram we know that ${\rm im}\, F^*$ is a free module isomorphic to a submodule of ${\rm Hom}(\Omega^2(L_I),L_J)$. Hence, the matrix of $F^*$ is a matrix of rank 1 over $\mathbb F[t]$. Since the map $F^*$ is given by the maps from $P_w$ to $L_J$, which are given by multiplication by $t^l$ for some exponent $l$, the matrix of $F^*$ consists of the monomials. Because it is a matrix of rank 1 it follows that there is a column such that every other column is a multiple of that column. To find the invariant factor of this matrix, it remains to find a monomial with the smallest exponent from that column. This exponent gives us the integer $a$.  
\end{proof}

\begin{co}
For the integer $a$ from the previous theorem we have 
$$a=\min_{u\in U, v\in V}\{a_{uv}\} ,$$
where

$$ a_{uv}=\left\{
\begin{array}{ll}
 \# J \cap (u-(n-k), v] + \#  I\cap (v,u],& \text{if  $u$ is to the right of $v$;}   \\
\# (u,v]\setminus I + \# (v,u+k]\setminus J, &   \text{if $u$ is to the left of $v$.} \\  
\end{array} \right.
$$

\end{co}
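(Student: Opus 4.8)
The plan is to reduce, by dimension shifting, to the case $m=1$; to recall from the proof of the preceding theorem that $\mathrm{Ext}^{2}(L_I,L_J)\cong\mathbb F[t]/(t^{a})$, where $a$ is the smallest exponent occurring in the matrix $F^{*}$; and then to identify the exponent of the $(w,v)$-entry of $F^{*}$ (with $w=u+k$) with the stated quantity $a_{uv}$, read off the rims. For the reduction one uses the dimension shift isomorphism $\mathrm{Ext}^{2m}(L_I,L_J)\cong\mathrm{Ext}^{2}(\Omega^{2m-2}(L_I),L_J)$ together with the fact, from Section~2, that $\Omega^{2m-2}(L_I)$ is again a rank $1$ module, with rim $I+(m-1)k$; so it suffices to treat $\mathrm{Ext}^{2}(L_I,L_J)$, the general case being obtained by replacing $I$ by $I+(m-1)k$ in $a_{uv}$. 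Recall also from Section~2 that $\Omega^{2}(L_I)\cong L_{I+k}$, that its set of peaks is $W=U+k$, and that (Remark~\ref{embedding}) $\Omega^{2}(L_I)$ sits inside $\bigoplus_{v\in V}P_v$ as a diagonal copy $\iota=(j_v)_{v\in V}$, each $j_v$ being the canonical embedding of $L_{I+k}$ into the single-peaked module $P_v=L_{\{v+1,\dots,v+k\}}$.

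In the proof of the preceding theorem, applying $\mathrm{Hom}(-,L_J)$ to $\bigoplus_{w\in W}P_w\xrightarrow{F}\bigoplus_{v\in V}P_v\to\Omega(L_I)\to 0$ yields a matrix $F^{*}$ with monomial entries and rank $1$ over $\mathbb F[t]$; its single nonzero invariant factor is therefore $t^{a}$ with $a$ the smallest exponent among the entries of $F^{*}$. Factoring $F=\iota\circ\pi$, with $\pi=(\pi_w)_{w\in W}$ the projective cover of $L_{I+k}$ (so that $\pi_w$ places the peak $w$ of $P_w$ onto the peak $w$ of $L_{I+k}$) and $\iota$ the diagonal embedding above, the entry of $F^{*}$ in the row of $w=u+k$ and the column of $v$ is the composite
\[
P_w\xrightarrow{\ \pi_w\ }L_{I+k}\xrightarrow{\ j_v\ }P_v\xrightarrow{\ f_{v,J}\ }L_J ,
\]
where $f_{v,J}$ is the canonical map; expressing this composite as $t^{a_{uv}}f_{w,J}$ with $f_{w,J}$ canonical defines $a_{uv}\ge 0$, and then $a=\min_{u\in U,\,v\in V}a_{uv}$.

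The remaining task is to compute $a_{uv}$ from the three rims $I+k$, $\{v+1,\dots,v+k\}$ and $J$. Since $\pi_w$ is essentially a shift, $a_{uv}$ equals the total vertical offset picked up when the peak $w=u+k$ is carried down first by $j_v$, which pushes $L_{I+k}$ as high as it will go inside the tent of $P_v$ (descending on $[v,v+k]$, ascending on $[v+k,v]$), and then by $f_{v,J}$, which pushes that tent as high as it will go inside $L_J$. This splits according to whether the peak $w=u+k$ falls on the descending arc $(v,v+k]$ of $P_v$ — equivalently, the interval $(u,v]$ has at most $k$ elements, i.e.\ $u$ lies to the left of $v$ — or on its ascending arc, i.e.\ $u$ lies to the right of $v$. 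Carrying out the count, in direct analogy with the computation of the entries of $D^{*}$ in \eqref{eq:D*coeffs} and of the offsets in Remark~\ref{placement}, one should find that when $u$ is to the right of $v$ the offset is $\#\big(J\cap(u-(n-k),v]\big)+\#\big(I\cap(v,u]\big)$, and when $u$ is to the left of $v$ it is $\#\big((u,v]\setminus I\big)+\#\big((v,u+k]\setminus J\big)$, which is exactly $a_{uv}$; minimising over all $u\in U$ and $v\in V$ then gives $a$.

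The only genuinely non-formal point will be this last count: tracking, through the two canonical placements, how far the peak $w=u+k$ descends, and verifying that the answer decomposes into an ``$I$-part'' between $u$ and $v$ and a ``$J$-part'' between $v$ and $u+k$, with the dichotomy governed precisely by the position of $u$ relative to $v$. This is an elementary but somewhat delicate piece of lattice bookkeeping, of exactly the same flavour as the one already carried out for $\mathrm{Ext}^{1}$ via the trapezia and the matrix $D^{*}$.
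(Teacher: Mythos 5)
Your proposal takes essentially the same approach as the paper: both split the offset $a_{uv}$ into the vertical distance from the peak $w=u+k$ of $\Omega^2(L_I)$ down to the rim of $P_v$ (the ``$I$-part'') plus the distance from the rim of $P_v$ down to the rim of $L_J$ (the ``$J$-part''), with the same dichotomy depending on whether $u$ lies to the left or right of $v$. Making the factorization $F=\iota\circ\pi$ explicit is a nice clarification of what the paper conveys pictorially, and while you defer the final lattice bookkeeping, the claimed counts you state for the two pieces match the paper's.
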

\begin{proof} Let us first note that $W=I+k$ and label the matrix of $F^*$ with pairs $(u,v)$ rather than with pairs $(w,v)$ with $u$ corresponding to the element $u+k=w$  ($w=u-(n-k)$) of $W$. 

The numbers under the minimum function are the offsets of a given peak of $\Omega^2(L_I)$ from its canonical position when mapped into  $L_J$, that is, they give us monomials $t^{a_{uv}}$ in the matrix $F^*$ given by $(\ref{fff})$. Continuing with the example where  $I=\{1,2,4,9,11,12,14\}$ and $J=\{ 1,2,4,6,7,10,13 \}$, in the following picture (note that in the below picture the dashed and the thick black rim intersect between nodes 7 and 10, and that the dashed and thin black rim intersect between nodes 0 and 2)
\[
\begin{tikzpicture}[scale=0.8,baseline=(bb.base),
quivarrow/.style={black, -latex, thick}]
\newcommand{\seventh}{51.4} 
\newcommand{\circradius}{1.5cm}
\newcommand{\inradius}{1.2cm}
\newcommand{\outradius}{1.8cm}
\newcommand{\dotrad}{0.1cm} 
\newcommand{\bdrydotrad}{{0.8*\dotrad}} 
\path (0,0) node (bb) {}; 

\draw (0,4) circle(\bdrydotrad) [fill=black];
\draw (1,3) circle(\bdrydotrad) [fill=black];
\draw (2,2) circle(\bdrydotrad) [fill=black];
\draw (3,3) circle(\bdrydotrad) [fill=black];
\draw (4,2) circle(\bdrydotrad) [fill=black];
\draw (5,3) circle(\bdrydotrad) [fill=black]node[above]{$5$};
\draw (5,-1) circle(\bdrydotrad) [fill=black]node[right]{$u-(n-k)$};
\draw (7,1) circle(\bdrydotrad) [fill=black];
\draw (9,3) circle(\bdrydotrad) [fill=black]node[above]{$v$};
\draw (10,2) circle(\bdrydotrad) [fill=black];
\draw (12,4) circle(\bdrydotrad) [fill=black];
\draw (13,3) circle(\bdrydotrad) [fill=black];
\draw (15,5) circle(\bdrydotrad) [fill=black];
\draw (6,2) circle(\bdrydotrad) [fill=black];
\draw (8,2) circle(\bdrydotrad) [fill=black];
\draw (11,3) circle(\bdrydotrad) [fill=black];

\draw (11,1) circle(\bdrydotrad) [fill=black];
\draw (14,4) circle(\bdrydotrad) [fill=black];
\draw (12,0) circle(\bdrydotrad) [fill=black];
\draw (13,-1) circle(\bdrydotrad) [fill=black];
\draw (14,-2) circle(\bdrydotrad) [fill=black];
\draw (15,-3) circle(\bdrydotrad) [fill=black];
\draw (6,0) circle(\bdrydotrad) [fill=black];
\draw (4,-2) circle(\bdrydotrad) [fill=black];
\draw (3,-3) circle(\bdrydotrad) [fill=black];
\draw (2,-4) circle(\bdrydotrad) [fill=black];
\draw (1,-5) circle(\bdrydotrad) [fill=black];
\draw (0,-4) circle(\bdrydotrad) [fill=black];
\draw (3,-3) circle(\bdrydotrad) [fill=black];
\draw (8,-6) circle(\bdrydotrad) [fill=black];
\draw (12,-6) circle(\bdrydotrad) [fill=black];
\draw (13,-5) circle(\bdrydotrad) [fill=black];
\draw (14,-4) circle(\bdrydotrad) [fill=black];
\draw (3,-5) circle(\bdrydotrad) [fill=black];
\draw (4,-6) circle(\bdrydotrad) [fill=black];
\draw (6,-6) circle(\bdrydotrad) [fill=black];
\draw (7,-5) circle(\bdrydotrad) [fill=black];
\draw (9,-7) circle(\bdrydotrad) [fill=black];
\draw (10,-6) circle(\bdrydotrad) [fill=black];
\draw (11,-7) circle(\bdrydotrad) [fill=black];

\draw[black, thick] (0,4)--(1,3)--(2,2)--(3,3)--(4,2)--(5,3)--(7,1)--(9,3)--(10,2)--(12,4)--(13,3)--(14,4)--(15,5);

\draw[black, thin] (10,-6)--(11,-7)--(15,-3); 
\draw[dashed] (0,-4)--(1,-5)--(2,-4)--(3,-3)--(4,-2)--(5,-1)--(9,3)--(11,1)--(12,0)--(13,-1)--(14,-2)--(15,-3);
\draw[black, thin] (2,-4)--(4,-6)--(5,-5);
\draw[black, thin] (5,-5)--(6,-6)--(7,-5); 
\draw[black, thin] (7,-5)--(9,-7)--(10,-6);
\draw[black][dotted] (0,-4)--(1,-5)--(2,-4);
\draw (5,-5) circle(\bdrydotrad) [fill=black]node[above]{$w$};


\draw [dotted] (0,-10) -- (0,5);
\draw [dotted] (15,-10) -- (15,5);

\draw [dashed] (8,-10) -- (8,-9);

\end{tikzpicture}
\]
\noindent where we have a copy of $\Omega^2(L_I)$ placed inside of $P_9$ canonically (by placing the rim of $\Omega^2(L_I)$ as far up inside the rim of $P_9$ as possible), and with $P_9$ mapped canonically into $L_J$. The number $a_{13,9}$, with 13 corresponding to $w=13-(n-k)$, measures the vertical distance between the node labelled by $w$ on the thin black rim of $\Omega^2(L_I)$ and the node labelled by 5 on the thick black rim of $L_J$. This distance is equal to the sum of the vertical distance between the node $w$ and the dashed rim of $P_9$, and the vertical distance between the dashed rim of $P_9$ and the node $5$ of the rim of $L_J$.  The vertical distance between the node $w$ and the dashed rim of $P_v$ is equal to the number of elements in the set $I\cap (v,u]$ if $u$ is to the right of $v$, and it is equal to the number of elements in the set $(u,v]\setminus I$ if $u$ is to the left of $v$.  The vertical distance between the dashed rim of $P_v$ and the node labelled by $w=u-k$ of the thick black rim of $L_J$  is equal to the number of elements in the set  $J \cap (u-(n-k), v]$  if $u$ is to the right of $v$,  and it is equal to the number of elements in the set $(v,u+k]\setminus J$   if  $u$ is to the left of $v$.

\end{proof}

\begin{re}{\rm  Let $r+1$ be the number of peaks of the rim $I$, i.e.\ assume that $F^*$ is a matrix of the format $(r+1)\times (r+1)$. From the proof of the previous corollary we see that in order to compute the smallest exponent $a$ for the entries of $F^*$, it is sufficient to compute entries of one column and one row, which means that we have to compute at most $2r+1$ entries of the matrix $F^*$ determined by $(\ref{fff})$. We pick an arbitrary row, compute its entries, and choose the minimal one. Then we compute entries of the column that contains that minimal entry. Then the exponent $a$ is the minimal entry from that column.
}
\end{re}

\begin{te}Let $I$, $U$, $V$, $W$ and $J$ be as before. Then ${\rm Ext}^{2}(L_I,L_J)=0$ if and only if there exists $v_i\in V$ such that 
$$\# J\cap (u_i-(n-k),v_i]=0 \,\,\,\,  \text{or} \,\,\,\,   \# J\cap (v_i,u_{i-1}+k]=k-(v_i-u_{i-1}) .$$
\end{te}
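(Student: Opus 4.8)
The plan is to read off the statement from the two results established immediately above, namely that ${\rm Ext}^{2}(L_I,L_J)\cong\mathbb F[t]/(t^{a})$ together with the formula $a=\min_{u\in U,\,v\in V}a_{uv}$ for the two-case exponents $a_{uv}$. Thus ${\rm Ext}^{2}(L_I,L_J)=0$ if and only if $a=0$, i.e.\ if and only if $a_{uv}=0$ for some pair $(u,v)$; and since in each of the two cases $a_{uv}$ is a sum of two cardinalities, $a_{uv}=0$ forces \emph{both} of those cardinalities to vanish. So the whole argument reduces to deciding for which pairs $(u,v)$ this double vanishing can occur and then rewriting the resulting conditions into the form in the statement. (One may assume $L_I$ is not projective, i.e.\ $I$ is not a single cyclic interval; in the projective case both sides of the asserted equivalence hold trivially.)

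First I would show that $a_{uv}=0$ can only happen when $u$ is one of the two peaks adjacent to $v=v_i$, namely $u=u_i$ (the first peak following $v_i$) or $u=u_{i-1}$ (the peak immediately preceding $v_i$). Indeed, if $u$ is to the right of $v_i$, then $a_{uv_i}=0$ requires $\#\,I\cap(v_i,u]=0$; but $v_i$ is a valley of $I$, so the cyclic interval $(v_i,u]$ already contains an entire segment of $I$ once $u$ passes the first peak after $v_i$, whence $u=u_i$. Symmetrically, if $u$ is to the left of $v_i$, then $a_{uv_i}=0$ requires $(u,v_i]\subseteq I$, and since $v_i$ is a valley the interval $(u,v_i]$ meets a gap of $I$ once $u$ precedes the peak just before $v_i$, whence $u=u_{i-1}$. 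Along the way one records the routine facts that, $I$ having at least two segments, every downward slope has length $<k$ and every gap has length $<n-k$, so that $u_i$ is always to the right of $v_i$ and $u_{i-1}$ always to the left; this is exactly what makes the appropriate case of the formula for $a_{uv}$ apply to each of the two adjacent entries. The converse is immediate: $a\le a_{u_i,v_i}$ and $a\le a_{u_{i-1},v_i}$, so vanishing of either adjacent entry forces $a=0$.

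It then remains to unwind the two adjacent entries. For $(u_i,v_i)$ the ``$u$ to the right of $v$'' case gives $a_{u_i,v_i}=\#\,J\cap(u_i-(n-k),v_i]+\#\,I\cap(v_i,u_i]$, and $(v_i,u_i]$ is precisely the gap of $I$ following the segment ending at $v_i$, so the second summand is $0$ and $a_{u_i,v_i}=0$ iff $\#\,J\cap(u_i-(n-k),v_i]=0$. For $(u_{i-1},v_i)$ the ``$u$ to the left of $v$'' case gives $a_{u_{i-1},v_i}=\#\,(u_{i-1},v_i]\setminus I+\#\,(v_i,u_{i-1}+k]\setminus J$, and $(u_{i-1},v_i]$ is precisely the segment of $I$ ending at $v_i$, so the first summand is $0$ and $a_{u_{i-1},v_i}=0$ iff $(v_i,u_{i-1}+k]\subseteq J$, i.e.\ iff $\#\,J\cap(v_i,u_{i-1}+k]$ equals the length $u_{i-1}+k-v_i=k-(v_i-u_{i-1})$ of that interval (a non-negative number because $u_{i-1}$ is to the left of $v_i$). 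Putting these together, ${\rm Ext}^{2}(L_I,L_J)=0$ iff $a=0$ iff some adjacent entry vanishes iff there is $v_i\in V$ with $\#\,J\cap(u_i-(n-k),v_i]=0$ or $\#\,J\cap(v_i,u_{i-1}+k]=k-(v_i-u_{i-1})$, which is the assertion.

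The main obstacle is the reduction to adjacent pairs in the second step: one has to argue carefully, in the cyclic setting and keeping the left/right dichotomy in the definition of $a_{uv}$ straight, that the ``transverse'' term $\#\,I\cap(v,u]$ (resp.\ $\#\,(u,v]\setminus I$) is strictly positive for every non-adjacent pair, so that the minimum defining $a$ is always attained on one of the two entries next to a valley. Once this is in place, everything else is elementary counting of lattice points in the intervals $(u_i-(n-k),v_i]$ and $(v_i,u_{i-1}+k]$.
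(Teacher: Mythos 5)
Your argument is correct and coincides with the paper's proof: both combine ${\rm Ext}^2(L_I,L_J)\cong\mathbb F[t]/(t^a)$ with $a=\min_{u,v} a_{uv}$, observe that $a_{uv}=0$ forces both summands in the relevant case of the formula for $a_{uv}$ to vanish, reduce to the two adjacent pairs $(u_i,v_i)$ and $(u_{i-1},v_i)$, and unwind those two cases into the stated conditions. The only difference is presentational: the paper justifies the reduction to adjacent pairs geometrically, via the vertical distance between the rims of $\Omega^2(L_I)$ and $P_{v_i}$, whereas you extract the same conclusion directly from the explicit counting formula for $a_{uv}$ (noting, as the paper leaves implicit, that $d_i<k$ and $l_i<n-k$ ensure $u_i$ lies to the right and $u_{i-1}$ to the left of $v_i$, so the correct branch of the formula applies to each adjacent pair).
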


\begin{proof} 
From the proof of the previous theorem and corollary we know that ${\rm Ext}^{2}(L_I,L_J)=0$ if and only if there is an element of the matrix of $F^*$ that is equal to 1. This happens only if for some $u_i\in U$ and $v_j\in V$ the number $a_{u_j,v_i}$ is zero. For a given $u_j$ and $v_i$, recalling the picture from the proof of the previous corollary, $a_{u_j,v_i}=0$ if and only if both vertical distances at a given node $u_j-(n-k)$ between the rim of $P_{v_i}$ and the rim of $L_J$, and between the  rim of $P_{v_i}$ and the rim of $\Omega^2(L_I)$ inside the rim of $P_{v_i}$ are equal to 0. Obviously, this can not happen if $u_j-(n-k)$ is a node on the rim of $\Omega^2(L_I)$ that is not on the rim of $P_{v_i}$ at the same time, as in the case of the pictured node $w$ in the picture from the proof of the previous corollary. In this case, the vertical distance between $w$ on the rim of $\Omega^2(L_I)$ and $u-(n-k)=w$ on the rim of $P_{v_i}$ is strictly positive, so $a_{u_j,v_i}>0$ in this case. We conclude that if $a_{u_j,v_i}=0$, it must be that $u_j-(n-k)$ is on both the rim of $\Omega^2(L_I)$ and the rim of $P_{v_i}$. So for a given $v_i$, the only candidates $u_j$ for $a_{u_j,v_i}$ to be 0 are $u_{i}$, which is to the right of $v_i$, with the corresponding node $u_i-(n-k)$ on both rims of $\Omega^2(L_I)$ and $P_{v_i}$, and $u_{i-1}$, which is to the left of $v_i$, with the corresponding node $u_{i-1}+k$ on both rims of $\Omega^2(L_I)$ and $P_{v_i}$.  

For these two nodes $u_i-(n-k)$ and $u_{i-1}+k$, in order for the vertical distance between the rim of $P_v$ and the rim of $L_J$ to be equal to zero at the node $u_i-(n-k)$ (resp.\  $u_{i-1}+k$), it has to be that the rim $J$ has the same tendency between $u_i-(n-k)$ and $v_i$  (resp.\ between $v_i$ and $u_{i-1}+k$) as the rim of $P_{v_i}$. This means that it must be that $\# J\cap (u_i-(n-k),v_i]=0$ (resp.\ $\# J\cap (v_i,u_{i-1}+k]=k-(v_i-u_{i-1}$).

\end{proof}

\begin{re}{\rm Combined with Corollary \ref{cross} and periodicity of rank 1 modules, the previous theorem gives us a combinatorial criterion for vanishing of ${\rm Ext}^{i}(L_I,L_J)$ for arbitrary $i>0$, and for any rank 1 modules $L_I$ and $L_J$. This criterion is given purely in terms of the rims $I$ and $J$.}
\end{re}

\begin{ex}{\rm 
Take $I=\{1,2,4,9,11,12,14\}$ as in the proof of the previous corollary, and take for $J$ to be the set $\{1,2,10,12,13,14,15\}$. 

\[
\begin{tikzpicture}[scale=0.8,baseline=(bb.base),
quivarrow/.style={black, -latex, thick}]
\newcommand{\seventh}{51.4} 
\newcommand{\circradius}{1.5cm}
\newcommand{\inradius}{1.2cm}
\newcommand{\outradius}{1.8cm}
\newcommand{\dotrad}{0.1cm} 
\newcommand{\bdrydotrad}{{0.8*\dotrad}} 
\path (0,0) node (bb) {}; 


\draw (0,-2) circle(\bdrydotrad) [fill=black];
\draw (0,-4) circle(\bdrydotrad) [fill=black];
\draw (1,-5) circle(\bdrydotrad) [fill=black];
\draw (2,-4) circle(\bdrydotrad) [fill=black];
\draw (3,-3) circle(\bdrydotrad) [fill=black];
\draw (4,-2) circle(\bdrydotrad) [fill=black];
\draw (5,-1) circle(\bdrydotrad) [fill=black];
\draw (6,0) circle(\bdrydotrad) [fill=black];
\draw (7,1) circle(\bdrydotrad) [fill=black];
\draw (8,2) circle(\bdrydotrad) [fill=black];
\draw (9,3) circle(\bdrydotrad) [fill=black];
\draw (10,2) circle(\bdrydotrad) [fill=black];
\draw (11,3) circle(\bdrydotrad) [fill=black];
\draw (12,2) circle(\bdrydotrad) [fill=black];
\draw (13,1) circle(\bdrydotrad) [fill=black];
\draw (14,0) circle(\bdrydotrad) [fill=black];
\draw (15,-1) circle(\bdrydotrad) [fill=black];
\draw (15,-3) circle(\bdrydotrad) [fill=black];
\draw (14,-2) circle(\bdrydotrad) [fill=black];
\draw (13,-1) circle(\bdrydotrad) [fill=black];
\draw (12,0) circle(\bdrydotrad) [fill=black];
\draw (1,-3) circle(\bdrydotrad) [fill=black];
\draw (14,-4) circle(\bdrydotrad) [fill=black];
\draw (13,-5) circle(\bdrydotrad) [fill=black];
\draw (12,-6) circle(\bdrydotrad) [fill=black];
\draw (11,-7) circle(\bdrydotrad) [fill=black];
\draw (8,-6) circle(\bdrydotrad) [fill=black];
\draw (11,1) circle(\bdrydotrad) [fill=black];


\draw (3,-5) circle(\bdrydotrad) [fill=black];
\draw (4,-6) circle(\bdrydotrad) [fill=black];
\draw (6,-6) circle(\bdrydotrad) [fill=black];
\draw (7,-5) circle(\bdrydotrad) [fill=black];
\draw (9,-7) circle(\bdrydotrad) [fill=black];
\draw (10,-6) circle(\bdrydotrad) [fill=black];
\draw (11,-7) circle(\bdrydotrad) [fill=black];

\draw[thick] (0,-2)--(2,-4)--(9,3)--(10,2)--(11,3)--(15,-1);

\draw[black, thin] (10,-6)--(11,-7)--(15,-3); 
\draw[dashed] (0,-4)--(1,-5)--(2,-4)--(3,-3)--(4,-2)--(5,-1)--(9,3)--(11,1)--(12,0)--(13,-1)--(14,-2)--(15,-3);
\draw[black, thin] (2,-4)--(4,-6)--(5,-5);
\draw[black, thin] (5,-5)--(6,-6)--(7,-5); 
\draw[black, thin] (7,-5)--(9,-7)--(10,-6);
\draw[black][dotted] (0,-4)--(1,-5)--(2,-4);
\draw (5,-5) circle(\bdrydotrad) [fill=black]node[above]{$w$};


\draw [dotted] (0,-8) -- (0,3);
\draw [dotted] (15,-8) -- (15,3);

\draw [dashed] (8,-8) -- (8,-7);

\end{tikzpicture}
\]
From the  above picture we read-off that $a_{10,9}=0$ (note that in this picture the dashed and the thick black rim intersect between nodes 2 and 10, and that the dashed and thin black rim intersect between nodes 0 and 2). It follows that the corresponding element in the matrix of $F^*$ is equal to 1 and that ${\rm Ext}^{2}(L_I,L_J)=0.$ Note that in this case ${\rm Ext}^{1}(L_I,L_J)\neq 0$ because the number of $LR$ trapezia for the rims $I$ and $J$ is 2.

}
\end{ex}

\section*{ Acknowledgements}
We would like to thank Alastair King for all his help with this project. The authors were supported by the Austrian Science Fund project number P25647-N26, the first author was 
also supported by the project FWF W1230.

\end{document}